\newtheorem{proposition}{Proposition}[section]
\newtheorem{theorem}[proposition]{Theorem}
\newtheorem{lemma}[proposition]{Lemma}
\newtheorem{definition}[proposition]{Definition}
\newenvironment{proofof}[1]{\smallskip\noindent{\textbf{Proof~of~#1.}}%
  \hspace{1pt}}{\hspace{-5pt}{\nobreak\quad\nobreak\hfill\nobreak%
    $\square$\vspace{2pt}\par}\smallskip\goodbreak}
\numberwithin{equation}{section}
\renewcommand{\phi}{\varphi}
\renewcommand{\epsilon}{\varepsilon}
\renewcommand{\theta}{\vartheta}
\renewcommand{\L}[1]{\mathbf{L^#1}}
\newcommand{\Lloc}[1]{\mathbf{L^{#1}_{loc}}}
\newcommand{\C}[1]{\mathbf{C^{#1}}}
\newcommand{\W}[2]{\mathbf{W^{#1,#2}}}
\newcommand{\BV}{\mathbf{BV}}
\newcommand{\modulo}[1]{{\left|#1\right|}}
\newcommand{\norma}[1]{{\left\|#1\right\|}}
\newcommand{\reali}{{\mathbb{R}}}
\newcommand{\tv}{\mathop\mathrm{TV}}
\renewcommand{\O}{\mathinner{\mathcal{O}(1)}}
\newcommand{\pint}[1]{\mathaccent23{#1}}
\renewcommand{\d}[1]{\mathinner{\mathrm{d}{#1}}}
\newcommand{\wtv}{\mathop\mathrm{WTV}}
\newcommand{\wsto}{{\mathinner{\stackrel{\star}{\rightharpoonup}}}}
\begin{document}

\title{$\BV$ Solutions to $1$D Isentropic Euler Equations \\ in the
  Zero Mach Number Limit}

\author{Rinaldo M. Colombo$^1$ \and Graziano Guerra$^2$}

\footnotetext[1]{INDAM Unit, University of Brescia,
  Italy. \texttt{rinaldo.colombo@unibs.it}}

\footnotetext[2]{Department of Mathematics and Applications,
  University of Milano - Bicocca,
  Italy. \texttt{graziano.guerra@unimib.it}}

\maketitle

\begin{abstract}
  \noindent
  Two compressible immiscible fluids in 1D and in the isentropic
  approximation are considered. The first fluid is surrounded and in
  contact with the second one. As the Mach number of the first fluid
  vanishes, we prove the rigorous convergence for the fully
  non--linear compressible to incompressible limit of the coupled
  dynamics of the two fluids. A key role is played by a suitably
  refined wave front tracking algorithm, which yields precise $\BV$,
  $\L1$ and weak* convergence estimates, either uniform or explicitly
  dependent on the Mach number.

  \medskip

  \noindent\textbf{Keywords:} Incompressible limit, Compressible Euler
  Equations, Hyperbolic Conservation Laws, Zero Mach Number Limit

  \medskip

  \noindent\textbf{2010 MSC:} 35L65, 35Q35, 76N99
\end{abstract}

\section{Introduction}
\label{sec:Intro}

This paper is devoted to the compressible to incompressible limit in
the equations of isentropic gas dynamics, a widely studied subject in
the literature, see for instance the well known
results~\cite{KlainermanMajda1981, KlainermanMajda1982,
  MetivierSchochet2001, Schochet1986}, the more
recent~\cite{JiangYong}, the review~\cite{Schochet2005} with the
references therein and the monograph~\cite{Feireisl} for the Navier
Stokes equations. For Euler equations, the usual setting considers
regular solutions, whose existence is proved only for a finite time,
to the compressible equations in $2$ or $3$ space dimensions. As the
Mach number vanishes, these solutions are proved to converge to the
solutions to the incompressible Euler equations.

Consider for instance the isentropic Euler equations in the three
dimensional space:
\begin{displaymath}
  \begin{cases}
    \partial_{t}\rho+\nabla\cdot \left(\rho u\right)=0
    \\
    \partial_{t}\left(\rho u\right)+\nabla \cdot \left(\rho u\otimes
      u\right) + \nabla \overline{P}\left(\rho\right)=0 \,,
  \end{cases}
  \begin{array}{l}
    \overline{P} (\rho) > 0  \,,\quad \overline{P}'\left(\rho\right)>0,
    \\
    (t,x)\in\left[0,+\infty\right[ \times \reali^{3}.
  \end{array}
\end{displaymath}
where $\rho$ is the fluid density, $u$ is its speed and $\overline{P}
(\rho)$ is the pressure. For smooth solutions, this system is
equivalent to
\begin{equation}
  \label{eq:prima}
  \begin{cases}
    \partial_{t}\rho+u\cdot \nabla \rho + \rho \, \nabla\cdot u=0
    \\
    \partial_{t}u + u\cdot \nabla u + \frac{1}{\rho} \, \nabla
    \overline{P}\left(\rho\right)=0 \,.
  \end{cases}
\end{equation}
The Mach number is the ratio between the speed of the particles and
the sound speed; it can be introduced into the equations in at least
two different ways~\cite{Schochet2005}.

First, following~\cite{MetivierSchochet2001}, since the incompressible
limit can be understood as the limit when the Mach number tends to
zero, one begins by rescaling the fluid velocity $u\to\kappa \, u$
where $\kappa$ is a small parameter that eventually converges to
zero. In order to capture the motion of the particles traveling with a
small speed of order of $\kappa$ one needs a space--time rescaling,
$\frac{x}{t}\to \kappa \, \frac{x}{t}$, which allows to obtain, in the
rescaled variables, the system
\begin{equation}
  \label{eq:primaseconda}
  \begin{cases}
    \partial_{t}\rho+u\cdot \nabla \rho + \rho\nabla\cdot u=0\\[10pt]
    \partial_{t}u + u\cdot \nabla u +
    \frac{1}{\rho}\frac{1}{\kappa^{2}}\nabla
    \overline{P}\left(\rho\right)=0 \,.
  \end{cases}
\end{equation}

Alternatively, the same system is considered
in~\cite{KlainermanMajda1982}, but motivated by the following
approach, see~\cite{KlainermanMajda1982,Schochet1986}. Consider fluids
having equations of state $\overline P_{\kappa}(\rho)$, parametrized
by $\kappa$, such that the speed of sound
$\sqrt{\overline{P}_{\kappa}^{\prime}(\rho)}\to +\infty$ as $\kappa\to
0$:
\begin{equation}
  \label{eq:seconda}
  \begin{cases}
    \partial_{t}\rho+u\cdot \nabla \rho + \rho\nabla\cdot u=0
    \\[10pt]
    \partial_{t}u + u\cdot \nabla u + \frac{1}{\rho}\nabla \overline
    P_{\kappa}\left(\rho\right)=0.
  \end{cases}
\end{equation}

The two approaches coincide if the one parameter family of pressure
laws $\overline P_{\kappa}\left(\rho\right)$ satisfies
\begin{equation}
  \label{eq:pfamprop}
  \overline
  P_{\kappa}^{\prime}\left(\rho\right)=\frac{1}{\kappa^{2}}\overline
  P^{\prime}\left(\rho\right),
\end{equation}
where $\overline P$ is the fixed pressure law as
in~\eqref{eq:primaseconda}.

In the incompressible limit, the density is constant in time and space
so that the functional dependence of the pressure on the density is
lost. Therefore, it is convenient to use the pressure instead of the
density as unknown variable. Since $\overline
P_{\kappa}^{\prime}\left(\rho\right)>0$, we can take the inverse
function $R_{\kappa}\left(p\right)= \left(\overline
  P_{\kappa}\right)^{-1}\left(p\right)$ and rewrite~\eqref{eq:seconda}
using the pressure $p$ as unknown:
\begin{displaymath}
  \begin{cases}
    \frac{R_{\kappa}^{\prime}\left(p\right)}{R_{\kappa}\left(p\right)}
    \left[\partial_{t}p+u\cdot \nabla p\right] + \nabla\cdot u=0
    \\[10pt]
    \partial_{t}u + u\cdot \nabla u +
    \frac{1}{R_{\kappa}\left(p\right)}\nabla p=0 \,.
  \end{cases}
\end{displaymath}
As $\kappa\to 0$,
$\overline{P}_{\kappa}^{\prime}\left(\rho\right)\to+\infty$, therefore
$R_{\kappa}^{\prime}\left(p\right)\to 0$, and
$R_{\kappa}\left(p\right)\to \bar \rho$, where $\bar\rho$ is the
constant density at the incompressible limit. Formally, we get the
incompressible equations
\begin{displaymath}
  \begin{cases}
    \nabla\cdot u=0
    \\[7pt]
    \partial_{t}u + u\cdot \nabla u + \frac{1}{\bar \rho}\nabla p=0
    \,.
  \end{cases}
\end{displaymath}
In~\cite{KlainermanMajda1981, KlainermanMajda1982} this limit is
proved to hold for smooth solutions and small times. The heart of the
matter is finding energy estimates, uniform in the small parameter
$\kappa$.

Here, we obtain similar convergence results, in a 1D setting, for
\emph{all times} and within the framework of merely $\BV$ \emph{weak
  entropy solutions}.

\smallskip

The next section describes the physical
setting. Section~\ref{sec:Main} presents the key estimates and the
main convergence results. All technical details are deferred to
Section~\ref{sec:TD}.

\section{Two Immiscible Fluids}

In a 1D setting, an incompressible fluid behaves like a solid and its
speed is constant in space. Therefore, we consider two compressible
immiscible fluids and let one of the two become incompressible,
yielding a singular limit for a free boundary problem. Below, we
consider a volume of a compressible inviscid fluid, say the
\emph{liquid}, that fills the segment $[a(t),b(t)]$ and is surrounded
by another compressible fluid, say the \emph{gas}, filling the rest of
the real line (see Figure~\ref{fig:duefluidi}). We assume that the gas
obeys a fixed pressure law $\overline P_{g}\left(\rho\right)$, while
for the liquid we assume a one parameter family of pressure laws
$\overline P_{\kappa}\left(\rho\right)$ such that $\overline
P_{\kappa}^{\prime}\left(\rho\right)\to +\infty$ as $\kappa\to 0$. The
total mass of the liquid is fixed:
$\int_{a(t)}^{b(t)}\rho\left(t,x\right)\; \d{x} = m$.%
\definecolor{wwzzff}{rgb}{0.4,0.6,1.0}%
\definecolor{zzttqq}{rgb}{0.6,0.2,0.0} \definecolor{cqcqcq}{rgb}
{0.7529411764705882,0.7529411764705882,0.7529411764705882}%
\begin{figure}[!ht]
  \centering%
  \begin{tikzpicture}[line cap=round,line join=round,>=triangle
    45,x=0.45cm,y=0.45cm]%
    \clip(-4.300000000000001,-4.040000000000001) rectangle
    (20.0000000000003,0.5); \fill[color=zzttqq,fill=zzttqq,fill
    opacity=0.8] (3.0,-1.0) -- (3.0,-2.0) -- (9.0,-2.0) -- (9.0,-1.0)
    -- cycle; \fill[color=wwzzff,fill=wwzzff,fill opacity=0.75]
    (-3.0,-1.0) -- (-3.0,-2.0) -- (3.0,-2.0) -- (3.0,-1.0) -- cycle;
    \fill[color=wwzzff,fill=wwzzff,fill opacity=0.75] (9.0,-1.0) --
    (9.0,-2.0) -- (19.0,-2.0) -- (19.0,-1.0) -- cycle; \draw [line
    width=1.5000000000000003pt] (-3.0,-1.0)-- (19.0,-1.0); \draw [line
    width=1.5000000000000003pt] (19.0,-2.0)-- (-3.0,-2.0); \draw [line
    width=0.5000000000000003pt,<-] (20.0,-1.5)-- (-4.0,-1.5);
    \draw[color=black] (-0.74,-2.900000000000002) node
    {$\overline{P}_{g}(\rho)$}; \draw[color=black]
    (15.74,-2.900000000000002) node {$\overline{P}_{g}(\rho)$};
    \draw[color=black] (5.74,-2.900000000000002) node
    {$\overline{P}_{\kappa}(\rho)$}; \draw[color=black]
    (2.74,-0.3000000000002) node {$a(t)$}; \draw[color=black]
    (9.0,-0.3000000000002) node {$b(t)$}; \draw[color=black]
    (19.5,-2.2000000000002) node {$x$}; \draw[color=black] (14,0) node
    {``gas''}; \draw[color=black] (6,0) node {``liquid''};
    \draw[color=black] (0,0) node {``gas''};
  \end{tikzpicture}%
  \caption{The two immiscible fluids: the liquid is in the middle,
    while the gas fills the two sides.}%
  \label{fig:duefluidi}
\end{figure}
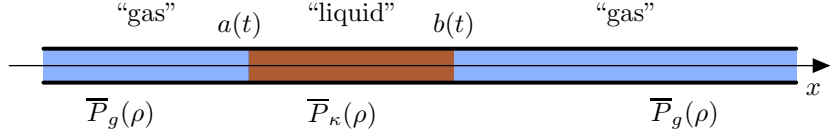%
Since the two fluids are immiscible, the introduction of the
Lagran\-gian coordinate $z$ and of the specific volume $\tau$ is a
natural choice~\cite{Wagner}:
\begin{equation}
  \label{eq:lagcoord}
  z\left(t,x\right)
  =
  \int_{a(t)}^{x}\rho\left(t,\xi\right)\; \d\xi,
  \qquad
  \tau = \frac{1}{\rho},
  \qquad
  P_{g}\left(\tau\right)=\overline{P}_{g}\left(\frac{1}{\tau}\right),
  \qquad P_{\kappa}\left(\tau\right)=\overline{P}_{\kappa}\left(\frac{1}{\tau}\right) .
\end{equation}
In these coordinates, the liquid and gas phases become the fixed sets
(see Figure~\ref{fig:liquidogas2})
\begin{displaymath}
  \mathcal{L} = \left]0,m \right[
  \qquad \mbox{ and } \qquad
  \mathcal{G} = \reali \setminus \left]0,m \right[ \,.
\end{displaymath}
\begin{figure}[!ht]
  \centering \definecolor{wwzzff}{rgb}{0.4,0.6,1.0}
  \definecolor{zzttqq}{rgb}{0.6,0.2,0.0}
  \definecolor{cqcqcq}{rgb}{0.7529411764705882,0.7529411764705882,0.7529411764705882}
  \begin{tikzpicture}[line cap=round,line join=round,>=triangle
    45,x=0.45cm,y=0.45cm]
    \clip(-4.00000000000001,-4.00000000000001) rectangle
    (20.0000000000003,-0.0); \fill[color=zzttqq,fill=zzttqq,fill
    opacity=0.8] (3.0,-1.0) -- (3.0,-2.0) -- (9.0,-2.0) -- (9.0,-1.0)
    -- cycle; \fill[color=wwzzff,fill=wwzzff,fill opacity=0.75]
    (-3.0,-1.0) -- (-3.0,-2.0) -- (3.0,-2.0) -- (3.0,-1.0) -- cycle;
    \fill[color=wwzzff,fill=wwzzff,fill opacity=0.75] (9.0,-1.0) --
    (9.0,-2.0) -- (19.0,-2.0) -- (19.0,-1.0) -- cycle; \draw [line
    width=1.5000000000000003pt] (-3.0,-1.0)-- (19.0,-1.0); \draw [line
    width=1.5000000000000003pt] (19.0,-2.0)-- (-3.0,-2.0);
    \draw[color=black] (-0.74,-2.900000000000002) node
    {$P_{g}(\tau)$}; \draw[color=black] (15.74,-2.900000000000002)
    node {$P_{g}(\tau)$}; \draw[color=black] (5.74,-2.900000000000002)
    node {$P_{\kappa}(\tau)$}; \draw[color=black]
    (3.0,-0.3000000000002) node {$0$}; \draw[color=black]
    (9.0,-0.3000000000002) node {$m$}; \draw [line
    width=0.5000000000000003pt,<-] (20.0,-1.5)-- (-4.0,-1.5);
    \draw[color=black] (19.5,-2.2000000000002) node {$z$}; \draw [line
    width=0.5000000000000003pt] (3.00,-2.5)-- (3.00,-0.7); \draw [line
    width=0.5000000000000003pt] (9.00,-2.5)-- (9.00,-0.7);
  \end{tikzpicture}
  \caption{In Lagrangian coordinates, the boundaries separating the
    two fluids are fixed.}
  \label{fig:liquidogas2}
\end{figure}
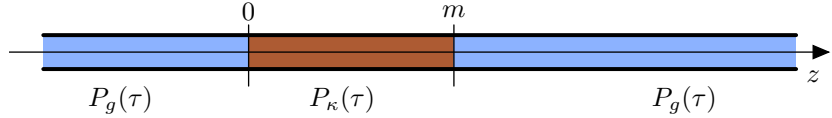

On $P_{g}\left(\tau\right)$ and $P_{\kappa}\left(\tau\right)$, we
require the usual hypotheses and the incompressible limit assumption:
\begin{equation}
  \label{eq:Pproperties}
  P_{g}\left(\tau\right),\, P_{\kappa}\left(\tau\right)>0;\;
  P'_{g}\left(\tau\right),\, P_{\kappa}^{\prime}\left(\tau\right)<0;\;
  P''_{g}\left(\tau\right),\, P_{\kappa}^{\prime\prime}\left(\tau\right)>0;\;
  P_{\kappa}^{\prime}\left(\tau\right)\xrightarrow{\kappa\to 0}-\infty \,.
\end{equation}
In the isentropic approximation, the dynamics of the two fluids is
described by the $p$-system~\cite{DafermosBook}
\begin{equation}
  \label{eq:baseeqint}
  \begin{cases}
    \partial_{t}\tau-\partial_{z}v=0
    \\
    \partial_{t}v+\partial_{z}P_{\kappa}\left(z,\tau\right)=0,
  \end{cases}
  \quad \mbox{ where } \quad
  P_{\kappa}\left(z,\tau\right)=
  \begin{cases}
    P_{\kappa}\left(\tau\right)&\mbox{ for }z\in
    \mathcal{L}\\
    P_{g}\left(\tau\right)&\mbox{ for }z\in \mathcal{G},
  \end{cases}
\end{equation}
$v(t,z)$ being the speed of the fluids at time $t$ and at the
Lagrangian coordinate $z$.

The Rankine--Hugoniot conditions for~\eqref{eq:baseeqint}, applied at
$z=0$ and $z=m$, imply the following interface conditions
(conservation of mass and momentum) for a.e. $t\geq 0$:
\begin{displaymath}
  \begin{cases}
    v\left(t,0-\right)=v\left(t,0+\right)
    \\
    P_{g}\left(\tau\left(t,0-\right)\right) =
    P_{\kappa}\left(\tau\left(t,0+\right)\right),
  \end{cases}
  \quad
  \begin{cases}
    v\left(t,m-\right)=v\left(t,m+\right)
    \\
    P_{\kappa}\left(\tau\left(t,m-\right)\right) =
    P_{g}\left(\tau\left(t,m+\right)\right).
  \end{cases}
\end{displaymath}
In other words, the pressure and the velocity have to be continuous
across the interfaces. Hence, the pressure is a natural choice as
unknown, rather than the specific volume. Therefore, we introduce the
inverse functions of the pressure laws
\begin{equation}
  \label{eq:Tproperties}
  \mathcal{T}_{g}(p)=P^{-1}_{g}\left(p\right),\quad
  \mathcal{T}_{\kappa}(p)=P^{-1}_{\kappa}\left(p\right),\quad
  \mathcal{T}_{\kappa}'\left(p\right)\xrightarrow{\kappa\to 0} 0 \,,
\end{equation}
the last limit being a consequence of~\eqref{eq:Pproperties}. Rewrite
system~\eqref{eq:baseeqint} with $(p,v)$ as unknowns
\begin{equation}
  \label{eq:baseeqinp}
  \begin{cases}
    \partial_{t}\mathcal{T}_{\kappa}\left(z,p\right)-\partial_{z}v=0
    \\
    \partial_{t}v+\partial_{z}p=0 \,,
  \end{cases}
  \quad \mbox{ where } \quad
  \mathcal{T}_{\kappa}\left(z,p\right)=
  \begin{cases}
    \mathcal{T}_{\kappa}\left(p\right)&\mbox{ for }z\in \mathcal{L}
    \\
    \mathcal{T}_{g}\left(p\right)&\mbox{ for }z\in\mathcal{G} \,.
  \end{cases}
\end{equation}
The conditions at the interfaces become continuity requirements on the
unknown functions:
\begin{equation}
  \label{eq:intCond}
  \begin{cases}
    v\left(t,0-\right)=v\left(t,0+\right)
    \\
    p\left(t,0-\right)=p\left(t,0+\right)
  \end{cases}
  \qquad
  \begin{cases}
    v\left(t,m-\right)=v\left(t,m+\right)
    \\
    p\left(t,m-\right)=p\left(t,m+\right)
  \end{cases}\quad \mbox{for a.e. }t\geq 0 \,.
\end{equation}
The choice of these unknowns significantly simplifies the study of the
Riemann problem at the interfaces.

\bigskip

Particular care is necessary to select the one parameter family of
pressure laws, the main constraint being the validity
of~\eqref{eq:pfamprop} for all $\kappa$. Indeed, \eqref{eq:pfamprop}
ensures that we recover the same equations obtained through scaling
and studied in~\cite{KlainermanMajda1981, KlainermanMajda1982}. The
family $\overline P_{\kappa}\left(\rho\right) =\frac{1}{\kappa^{2}}
\overline P\left(\rho\right)$ chosen in~\cite{KlainermanMajda1981}
diverges to $+\infty$ as $\kappa\to 0$. This is not a problem when
studying only one fluid as in~\cite{KlainermanMajda1981} because the
pressure enters the equations only through its gradient. In our case,
the value of the pressure is very relevant, since it enters the
interface conditions~\eqref{eq:intCond}. Therefore, we cannot allow
the pressure to grow nonphysically to $+\infty$. We fix the density
$\bar \rho$ of the incompressible fluid in the limit and impose that
the pressure at that particular density $\bar \rho$ is a constant,
independent of $\kappa$:
\begin{equation}
  \label{eq:pressurecond2}
  \overline P_{\kappa}(\bar \rho)
  =
  \bar p,\quad \mbox{ for all }\kappa\in \left]0,1\right[ \,.
\end{equation}
Now, choose a fixed pressure law $ \overline P = \overline P(\rho)$
(for instance, an admissible choice is the usual $\gamma$--law
$\overline P\left(\rho\right) = k \, \rho^{\gamma}$ with $\gamma\geq
1$) and apply conditions~\eqref{eq:pfamprop}
and~\eqref{eq:pressurecond2} to get the following expression for
$\overline P_{\kappa}\left(\rho\right)$, with $\overline{P}\left(\bar
  \rho\right)=\bar p$:
\begin{equation}
  \label{eq:2}
  \overline
  P_{\kappa}\left(\rho\right)
  =
  \bar p
  +
  \frac{1}{\kappa^{2}}\left[
    \overline{P}\left(\rho\right)-\bar p\right] \,,
\end{equation}
which, with the substitution $\rho=\frac{1}{\tau}$, becomes:

\begin{equation}
  \label{eq:2bis}
  P_{\kappa}\left(\tau\right) = \bar p + \frac{1}{\kappa^{2}}\left[
    P\left(\tau\right)-\bar p\right].
\end{equation}

Finally, in term of the inverse functions
$\mathcal{T}_{\kappa}=P_{\kappa}^{-1}$ and $\mathcal{T}=P^{-1}$, we
have
\begin{equation}
  \label{eq:tkfamily}
  \mathcal{T}_{\kappa}\left(p\right) =
  \mathcal{T}\left(\bar p + \kappa^{2}
    \left(p-\bar p\right)\right)\,,\quad
  \lim_{\kappa\to
    0}\mathcal{T}_{\kappa}\left(p\right)=\mathcal{T}\left(\bar
    p\right)=\frac{1}{\bar \rho}\dot = \bar \tau\,.
\end{equation}
In~\cite{ColomboGuerraSchleper, CGSInc}, \eqref{eq:tkfamily} is
approximated linearly:
\begin{equation}
  \label{eq:linear}
  \mathcal{T}\left(\bar p + \kappa^{2} \left(p-\bar p\right)\right)
  \approx
  \mathcal{T}\left(\bar p\right)
  + \kappa^{2}\mathcal{T}'\left(\bar p\right)
  \left(p-\bar p\right)
  =
  \bar \tau
  + \kappa^{2}\mathcal{T}'\left(\bar p\right)
  \left(p-\bar p\right)\,,
\end{equation}
so that the liquid phase turns out to be governed by a linear
system. This approximation makes all the estimates simpler.  Here we
study the Cauchy problem in the fully non linear case
\begin{equation}
  \label{eq:finalequation}
  \begin{cases}
    \partial_{t}\mathcal{T}_{\kappa}\left(z,p\right)-\partial_{z}v=0
    \\
    \partial_{t}v+\partial_{z}p=0,
  \end{cases}
  \quad \mbox{ where } \quad
  \mathcal{T}_{\kappa}\left(z,p\right)=
  \begin{cases}
    \mathcal{T}\left(\bar p + \kappa^{2} \left(p-\bar p\right)\right)
    &\mbox{ for }z\in
    \mathcal{L}\\
    \mathcal{T}_{g}\left(p\right)&\mbox{ for }z\in \mathcal{G}.
  \end{cases}
\end{equation}
Colombo and Schleper in~\cite[Theorem~2.5]{ColomboSchleper} proved
that for any fixed small $\kappa>0$, there exists a Lipschitz
semigroup of solutions to~\eqref{eq:finalequation}, but their
estimates are not uniform with respect to $\kappa$. Therefore, as
$\kappa\to 0$ the Lipschitz constant of the semigroup could blow up
and its domain could shrink, becoming trivial. Here, we provide a full
set of new estimates either uniform in $\kappa$, or with the
dependence on $\kappa$ made explicit. To this aim, we substantially
improve the wave front tracking construction
in~\cite{BressanLectureNotes, ColomboSchleper}, devising and
exploiting a different parametrization of the Lax curves.

The main result of this paper states the rigorous convergence at the
incompressible limit in the liquid phase of the solutions
to~\eqref{eq:finalequation} to solutions to
\begin{equation}
  \label{eq:IncompL}
  \left\{
    \begin{array}{lr@{\,}c@{\,}l@{\qquad\qquad}l@{}}
      \left\{
        \begin{array}{@{}l}
          \partial_t \mathcal{T}_{g}(p) - \partial_z v = 0
          \\
          \partial_t v
          +
          \partial_z  p
          =
          0
        \end{array}
      \right.
      &
      z & \in &  \mathcal{G}
      &
      \mbox{gas;}
      \\[20pt]
      \dot v
      =
      \frac{p(t, 0-) - p(t, m+)}{m}
      & & & &
      \mbox{liquid;}
      \\[10pt]
      \left\{
        \begin{array}{@{}rcl@{}}
          v \left(t, 0-\right)
          & = &
          v (t)
          \\
          v \left(t, m+\right)
          & = &
          v (t)
        \end{array}
      \right.
      &
      & &
      &
      \begin{array}{@{}l}
        \mbox{immiscibility and mass }
        \\
        \mbox{conservation.}
      \end{array}
    \end{array}
  \right.
\end{equation}
Note that the liquid speed $v (t)$ is independent of the Lagrangian
variable $z$. In this very singular limit, the sound speed in the
liquid phase tends to $+\infty$; the density converges to a fixed
reference value $\bar\rho$; the graph of the pressure law
$P_{\kappa}\left(\tau\right)$ becomes vertical and the eigenvectors of
the Jacobian of the flow tend to coalesce. Moreover, the pressure in
the liquid wildly oscillates but, remarkably, we are able to prove the
weak$^\star$ convergence of the pressure to the linear interpolation
of the traces of the pressure at the sides of the liquid region, as is
to be expected based on physical considerations. A linear example,
where all the components of this singular limit can be explicitly
computed, can be found in~\cite{ColomboGuerraInc}.

Recall that problem~\eqref{eq:finalequation},
respectively~\eqref{eq:IncompL}, is well posed in $\L1$ globally in
time, see~\cite[Theorem~2.5]{ColomboSchleper},
respectively~\cite[Theorem~3.6]{BorscheColomboGaravello3}.

\section{Main Result}
\label{sec:Main}

Throughout, we require that the pressure law $P_g$ in the gas phase
and the one parameter family of pressure laws $P_\kappa$ in the liquid
phase, as defined in~\eqref{eq:2bis}, all satisfy the condition
\begin{description}
\item[(P):] $P \in \C3 (\left]0, +\infty\right[; \left]0,
    +\infty\right[)$, $P' < 0$ and $P'' > 0$.
\end{description}
\noindent The standard choice $p (\tau) = k / \tau^\gamma$ satisfies
this condition for all $k>0$ and $\gamma \geq 1$.

As a starting point, we provide the rigorous definition of solutions
to~\eqref{eq:baseeqinp}, with reference to~\cite[Chapter~4,
Definition~4.3 and Admissibility Condition~2]{BressanLectureNotes}.

\begin{definition}
  \label{def:solution}
  Fix $T > 0$ and $\kappa > 0$. By \emph{weak solution}
  to~\eqref{eq:baseeqinp} we mean a map
  \begin{displaymath}
    (p,v)
    \in
    \C0 \left([0,T]; (\Lloc1 \cap \BV) (\reali; \reali^+
      \times \reali)\right)
  \end{displaymath}
  such that~\eqref{eq:baseeqinp} holds in distributional sense. The
  weak solution $u$ is a \emph{weak entropy solution}
  to~\eqref{eq:baseeqinp} if both its restrictions to $\mathcal{L}$
  and to $\mathcal{G}$ are weak entropy solutions in the sense
  of~\cite[Definition~4.3]{BressanLectureNotes}.
\end{definition}

\noindent Introduce the mathematical entropy flow $q = p v$
of~\eqref{eq:baseeqinp}, the equalities $(p,v) (t, 0-) = (p,v) (t,
0+)$ and $(p,v) (t, m-) = (p,v) (t, m+)$ (consequences of the
Rankine--Hugoniot conditions) imply that the entropy flow is
continuous and hence that the entropy is conserved across both
interfaces.

In the case of~\eqref{eq:IncompL}, we
recall~\cite[Definition~2.5]{BorscheColomboGaravello2}.

\begin{definition}
  \label{def:sol2}
  Fix $T>0$. By a \emph{solution} to~\eqref{eq:IncompL} we mean a pair
  of maps
  \begin{eqnarray*}
    (p^{*},v^{*})
    & \in &
    \C0\left(
      [0,T];
      (\Lloc1 \cap \BV) (\mathcal{G}; \reali^+ \times \reali)
    \right)
    \\
    v_{l}
    & \in &
    \W{1}{\infty} ([0,T]; \reali)
  \end{eqnarray*}
  such that:
  \begin{enumerate}
  \item $(p^{*},v^{*})$ is a weak entropy solution to $\left\{
      \begin{array}{l}
        \partial_t \mathcal{T}_{g}(p) - \partial_z v = 0
        \\
        \partial_t v + \partial_z p = 0
      \end{array}
    \right.$ in $[0,T] \times \mathcal{G}$;
  \item for a.e.~$t \in [0,T]$, $\dot v_{l} (t) = \frac{1}{m}
    \left(p^{*}(t, 0-) - p^{*} (t, m+)\right)$;
  \item for a.e.~$t \in [0,T]$, $v^{*} (t,0-) = v^{*} (t,m+) = v_{l}
    (t)$.
  \end{enumerate}
\end{definition}

\noindent The existence of solutions to~\eqref{eq:finalequation}
follows from the next theorem, that also provides the basic estimates
for the subsequent compressible to incompressible limit. In this
context, a natural requirement is the smallness of the total variation
of the initial datum. Aiming at the incompressible limit, it is
natural to introduce the weighted total variation
\begin{equation}
  \label{eq:wtv}
  {\wtv}_{\kappa}\left(p,v\right)
  =
  \tv\left(p,\reali\right)
  +
  \tv\left(v,\mathcal{G}\right)
  +
  \frac{1}{\kappa} \; \tv\left(v,\mathcal{L}\right)
\end{equation}
whose boundedness requires that the initial total variation of the
particles speed in the liquid vanishes with $\kappa$.

\begin{theorem}
  \label{thm:kappa}
  Fix the total mass of the liquid $m > 0$ and a pressure $p_{o} >
  0$. Let $P, P_g$ satisfy~\textbf{(P)}, define $\mathcal{T}_{g}$ as
  in~\eqref{eq:Tproperties} and $\mathcal{T}_{\kappa}$ as
  in~\eqref{eq:tkfamily}. Then, there exist positive
  $\delta,\;\Delta,\;L,\;\kappa_{*}$ with $\kappa_{*}<1$ such that for
  any $\kappa \in \left]0, \kappa_*\right[$, for any initial datum
  $(\tilde p, \tilde v) \in \Lloc1 (\reali; \reali^+ \times \reali)$,
  under the assumptions
  \begin{equation}
    \label{eq:delta1}
    {\wtv}_\kappa (\tilde p, \tilde v)
    \leq
    \delta
    \,, \quad
    \norma{\tilde p - p_{o}}_{\L\infty (\reali;\reali)}
    \leq
    \delta
    \,, \quad
    \tilde v (0+) = \tilde v (0)
    \quad \mbox{ and } \quad
    \tilde v (m-) = \tilde v (m) \,,
  \end{equation}
  problem~\eqref{eq:finalequation} with initial datum $(\tilde
  p,\tilde v)$ admits a weak entropy solution $(p^{\kappa},
  v^{\kappa})$ in the sense of Definition~\ref{def:solution} defined
  for all $t \in \reali^+$. Moreover, since the specific volume is
  $\tau^{\kappa}(t,z) = \mathcal{T}_{\kappa}\left(z,p^{\kappa}(t,z)\right)$,\\
  for any $t,t_{1},t_{2}\geq 0$
  \begin{equation}
    \label{eq:THMtimeestimates}
    \begin{array}{@{}l@{\,}c@{\,}rl@{\,}c@{\,}r@{}}
      {\wtv}_{\kappa} \left(u^{\kappa}(t,\cdot)\right)
      & \leq &
      \Delta,
      &&
      \\[10pt]
      \tv\left(p^{\kappa}(t,\cdot), \mathcal{L}\right)
      & \leq &
      \Delta,
      &
      \int_{\mathcal{L}} \left|p^{\kappa}
        (t_{2},z)-p^{\kappa}(t_{1},z)\right| \d{z}
      & \leq &
      \frac{1}{\kappa}L\left|t_{2}-t_{1}\right|,
      \\[10pt]
      \tv\left(v^{\kappa}(t,\cdot),\mathcal{L}\right)
      & \leq &
      \kappa\Delta,
      &\int_{\mathcal{L}}\left|v^{\kappa}
        (t_{2},z)-v^{\kappa}(t_{1},z)\right| \d{z}
      & \leq &   L\left|t_{2}-t_{1}\right|,
      \\[10pt]
      \tv\left(\tau^{\kappa}(t,\cdot), \mathcal{L}\right)
      & \leq &
      \kappa^{2}\Delta,
      &\int_{\mathcal{L}}\left|\tau^{\kappa}
        (t_{2},z)-\tau^{\kappa}(t_{1},z)\right| \d{z}
      & \leq &
      \kappa L\left|t_{2}-t_{1}\right|,
      \\[10pt]
      \tv \left(p^{\kappa}(t,\cdot), \mathcal{G}\right)
      & \leq &
      \Delta,
      &\int_{\mathcal{G}}
      \left|p^{\kappa}(t_{2},z)-p^{\kappa}(t_{1},z)\right| \d{z}
      & \leq &
      L\left|t_{2}-t_{1}\right|,
      \\[10pt]
      \tv \left(v^{\kappa}(t,\cdot),\mathcal{G}\right)
      & \leq &
      \Delta,
      &
      \int_{\mathcal{G}}\left|v^{\kappa}
        (t_{2},z)-v^{\kappa}(t_{1},z)\right| \d{z}
      & \leq &
      L\left|t_{2}-t_{1}\right|,
      \\[10pt]
      \tv\left(\tau^{\kappa}(t,\cdot),\mathcal{G}\right)
      & \leq &
      \Delta,
      &
      \int_{\mathcal{G}}\left|\tau^{\kappa}
        (t_{2},z)-\tau^{\kappa}(t_{1},z)\right| \d{z}
      & \leq &
      L\left|t_{2}-t_{1}\right|;
      \\[10pt]
    \end{array}
  \end{equation}
  for any $z,z_{1},z_{2}\in \mathcal{L}$
  \begin{equation}
    \label{eq:THMspaceestimates}
    \begin{array}{@{}l@{\,}c@{\,}rl@{\,}c@{\,}r@{}}
      \tv\left(p^{\kappa}(\cdot,z),\reali^{+}\right)
      & \leq &
      \frac{\Delta}{\kappa},
      &\int_{\reali^+}\left|p^{\kappa}
        (t,z_{2})-p^{\kappa}(t,z_{1})\right|\d{t}
      & \leq &
      L\left|z_{2}-z_{1}\right|,
      \\[10pt]
      \tv\left(v^{\kappa}(\cdot,z),\reali^{+}\right)
      & \leq &
      \Delta,
      &\int_{\reali^+}\left|v^{\kappa}
        (t,z_{2})-v^{\kappa}(t,z_{1})\right|\d{t}
      & \leq &
      \kappa L\left|z_{2}-z_{1}\right|,
      \\[10pt]
      \tv\left(\tau^{\kappa}(\cdot,z),\reali^{+}\right)
      & \leq &
      \kappa\Delta,
      &\int_{\reali^+}\left|\tau^{\kappa}
        (t,z_{2})-\tau^{\kappa}(t,z_{1})\right|\d{t}
      & \leq &
      \kappa^{2}L\left|z_{2}-z_{1}\right|;
      \\[10pt]
    \end{array}
  \end{equation}
  for any $z,z_{1},z_{2}\in \mathcal{G}$
  \begin{equation}
    \label{eq:THMspaceestimates2}
    \begin{array}{@{}l@{\,}c@{\,}rl@{\,}c@{\,}r@{}}
      \tv\left(p^{\kappa}(\cdot,z),\reali^{+}\right)
      & \leq &
      \Delta,
      &\int_{\reali^+}\left|p^{\kappa}
        (t,z_{2})-p^{\kappa}(t,z_{1})\right|\d{t}
      & \leq &
      L\left|z_{2}-z_{1}\right|,
      \\[10pt]
      \tv\left(v^{\kappa}(\cdot,z),\reali^{+}\right)
      & \leq &
      \Delta,
      &\int_{\reali^+}\left|v^{\kappa}
        (t,z_{2})-v^{\kappa}(t,z_{1})\right|\d{t}
      & \leq &
      L\left|z_{2}-z_{1}\right|,
      \\[10pt]
      \tv\left(\tau^{\kappa}(\cdot,z),\reali^{+}\right)
      & \leq &
      \Delta,
      &\int_{\reali^+}\left|\tau^{\kappa}
        (t,z_{2})-\tau^{\kappa}(t,z_{1})\right|\d{t}
      & \leq &
      L\left|z_{2}-z_{1}\right|;
    \end{array}
  \end{equation}
  for any $z,z_{1},z_{2}\in \reali$
  \begin{equation}
    \label{eq:spaceestimatesAllLine}
    \begin{array}{@{}l@{\,}c@{\,}rl@{\,}c@{\,}r@{}}
      \tv\left(p^{\kappa}(\cdot,z),\reali^{+}\right)
      & \leq &
      \frac{\Delta}{\kappa},
      &\int_{\reali^+}\left|p^{\kappa}
        (t,z_{2})-p^{\kappa}(t,z_{1})\right|\d{t}
      & \leq &
      \frac{L}{\kappa}\left|z_{2}-z_{1}\right|,
      \\[10pt]
      \tv\left(v^{\kappa}(\cdot,z),\reali^{+}\right)
      & \leq &
      \Delta,
      &\int_{\reali^+}\left|v^{\kappa}
        (t,z_{2})-v^{\kappa}(t,z_{1})\right|\d{t}
      & \leq &
      L\left|z_{2}-z_{1}\right|.
    \end{array}
  \end{equation}
\end{theorem}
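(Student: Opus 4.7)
The plan is to construct the solutions by a wave front tracking scheme tailored to the weighted total variation $\wtv_\kappa$, and to read off all the estimates from the strengths and speeds of the fronts. The preparatory step is to reparametrize the Lax curves of the liquid $p$--system so that the size of a wave is measured by its jump in $p$. Thanks to the dilation structure \eqref{eq:tkfamily}, the liquid characteristic speeds are of order $1/\kappa$ and the Riemann coordinates have the form $v \pm (1/\kappa) R\bigl(\kappa^{2}(p-\bar p)\bigr)$ for a fixed $\C2$ function $R$; hence a liquid front of $p$--strength $\sigma$ produces a $v$--jump of size $O(\kappa) |\sigma|$ and a $\tau$--jump of size $O(\kappa^{2}) |\sigma|$. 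In this way $\wtv_\kappa$ is, up to a uniform constant, the sum of the $p$--strengths of all fronts regardless of the phase, and is the natural Glimm--type seminorm for the problem.

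With this parametrization, the next step is to establish interaction and interface estimates uniform in $\kappa$. At the interfaces $z=0,m$ the Riemann problem couples a gas state to a liquid state through continuity of $p$ and $v$, exactly as in~\cite{ColomboSchleper}; the dilation structure of $\mathcal{T}_\kappa$ yields reflection and transmission coefficients that are $O(1)$ uniformly in $\kappa\in\left]0,\kappa_{*}\right[$ and whose reflection part has norm strictly less than $1$, accounting for the continuous leakage of liquid acoustic energy into the gas. Together with the classical $O(1)\, |\sigma_{1}||\sigma_{2}|$ interaction estimate inside each phase, the Glimm--type functional $\Upsilon = \wtv_\kappa + C_{0} Q$, with $Q$ the quadratic potential of approaching fronts weighted by $\wtv_\kappa$, is nonincreasing across every interaction provided $\delta$ and $\kappa_{*}$ are small enough. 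This yields the uniform bound $\wtv_\kappa(u^{\kappa}(t,\cdot))\leq\Delta$, and unwinding the parametrization gives all the $\BV$ bounds in~\eqref{eq:THMtimeestimates}: in particular $\tv(v,\mathcal{L})\leq\kappa\Delta$ and $\tv(\tau,\mathcal{L})\leq\kappa^{2}\Delta$ are direct consequences of the $O(\kappa)$ and $O(\kappa^{2})$ scaling of $v$-- and $\tau$--jumps across liquid fronts.

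The $\L1$--Lipschitz estimates in time in \eqref{eq:THMtimeestimates} then follow by summing, over all fronts, (wave amplitude) $\times$ (wave speed) $\times\, |t_{2}-t_{1}|$: liquid fronts travel at speed $O(1/\kappa)$ with $p$--amplitude $O(1)$, $v$--amplitude $O(\kappa)$, $\tau$--amplitude $O(\kappa^{2})$, yielding the $1/\kappa$ factor for $p$, the uniform constant for $v$ and the $\kappa$ factor for $\tau$; gas fronts travel at $O(1)$ speed and give the uniform estimates in $\mathcal{G}$. The spatial estimates~\eqref{eq:THMspaceestimates}--\eqref{eq:spaceestimatesAllLine} come partly from the equations themselves (integrating $\partial_{t}v=-\partial_{z}p$ and $\partial_{t}\mathcal{T}_\kappa(z,p)=\partial_{z}v$ over rectangles and using Fubini to trade $z$--increments for time integrals of traces) and partly from counting, at a fixed $z$, the fronts that cross the vertical line $\{z\}\times\reali^{+}$ weighted by their $p$-- or $v$--amplitude; here the geometric decay of successive reflections at the two interfaces is precisely what makes $\tv(p^\kappa(\cdot,z),\reali^{+})\leq \Delta/\kappa$ independent of time. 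Finally, with the uniform $\BV$ and time--Lipschitz bounds in hand, a standard Helly--type compactness argument as in \cite[Chapter~7]{BressanLectureNotes} provides, at fixed $\kappa$, the weak entropy solution of Definition~\ref{def:solution} as the limit of the front tracking approximations.

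The main obstacle will be the uniform--in--$\kappa$ control of the interface Riemann problem and the companion quadratic functional. In the linear setting of~\eqref{eq:linear} treated in~\cite{ColomboGuerraSchleper,CGSInc} the interface coupling is essentially explicit, but in the fully nonlinear case the $\kappa$--dependence of $\mathcal{T}_\kappa$ generates extra quadratic terms that have to be absorbed into $Q$ without destroying its uniform control. Proving that these terms remain small, so that $\delta$ and $\kappa_{*}$ can be chosen independently of $\kappa$, is the technical heart of the construction and is precisely what the new Lax--curve reparametrization is designed to make possible.
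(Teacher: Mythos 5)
Your high-level plan --- reparametrize the Lax curves by pressure jumps so that the dilation structure of $\mathcal{T}_\kappa$ yields $v$-jumps of order $\kappa\,|\sigma|$ and $\tau$-jumps of order $\kappa^2|\sigma|$ across liquid fronts, run a front tracking scheme, establish a non-increasing Glimm-type functional, and extract the solution by Helly --- is exactly the paper's approach. But the specific functional $\Upsilon=\wtv_\kappa+C_0\,Q$ you propose is \emph{not} non-increasing across interface interactions, and this is the load-bearing step of the construction.

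The failure is at the interface. Take a single gas front of strength $\sigma$ hitting $z=0$ with no liquid front present. The interface Riemann problem (Lemma~\ref{lem:RS_interface} together with the estimates~\eqref{eq:ieFig2uno}--\eqref{eq:come} of Lemma~\ref{lem:InterInter}) produces a reflected gas front $\sigma_1^+$ with $|\sigma_1^+|\approx|\sigma|$ and a transmitted liquid front $\sigma_2^+$ with $|\sigma_2^+|\approx 2|\sigma|$; indeed~\eqref{eq:come} gives $|\sigma_2^+|-|\sigma_1^+|=|\sigma|$ exactly. This is the near-rigid-wall doubling of the incident pressure jump, inevitable because the liquid acoustic impedance blows up like $1/\kappa$. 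So the unweighted sum of $p$-strengths jumps from $|\sigma|$ to roughly $3|\sigma|$, and the quadratic potential only shifts by $O(\bar\delta\,|\sigma|)$ and may even increase: no choice of $C_0$ saves $\wtv_\kappa+C_0\,Q$. Your ``reflection coefficient strictly less than $1$'' is the factor $1-c\kappa$ in~\eqref{eq:ieFig2uno}: it controls how a \emph{liquid} front decays upon repeated reflection, but it says nothing about the amplification suffered when a \emph{gas} front transmits into the liquid, which is precisely what breaks the monotonicity of your functional.

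What the paper does, and what your proposal must incorporate, is the asymmetric weighting of the linear part in~\eqref{eq:Upsilon}, namely $\Upsilon = K_{in}\,V_{\mathcal{G}_{in}} + V_{\mathcal{G}_{out}} + K_{\mathcal{L}}\,V_{\mathcal{L}} + H_{\mathcal{G}}\,Q_{\mathcal{G}} + \kappa^2 H_{\mathcal{L}}\,Q_{\mathcal{L}}$, where $\mathcal{G}_{in}$ collects gas waves heading \emph{toward} the liquid, $\mathcal{G}_{out}$ those heading away, and $K_{in}$ is taken large compared to $K_{\mathcal{L}}$ (Lemma~\ref{lem:Upsilon} chooses $K_{in}\geq 4+3K_{\mathcal{L}}$). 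An incoming gas wave is thus pre-charged with weight $K_{in}$; after the interaction the reflected wave carries weight $1$ and the transmitted wave weight $K_{\mathcal{L}}$, so the weighted strength drops by roughly $(K_{in}-1-2K_{\mathcal{L}})|\sigma|>0$. Conversely a liquid front hitting the interface produces a reflected liquid wave of weighted strength $K_{\mathcal{L}}(1-c\kappa)|\sigma|$ plus an $O(\kappa)$ transmitted gas wave, again a decrease. Without this incoming/outgoing split your monotonicity claim is false. A secondary but genuine gap: the liquid-phase interaction estimate is not the classical $O(1)\,|\sigma_1||\sigma_2|$ you invoke but the sharper $O(\kappa^2)\,|\sigma_1||\sigma_2|$ of Lemma~\ref{lem:InterLiquid}, a consequence of the $\kappa$-independence of $F$ in~\eqref{eq:para} combined with $\Pi_\kappa'=\kappa^2$; this $\kappa^2$ gain is used crucially in the last step of the proof of Proposition~\ref{prop:approxsol} to bound the maximal rarefaction strength uniformly in $\kappa$, which is needed for the $\epsilon\to 0$ limit to produce an entropy solution.
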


The above existence result can be completed with uniqueness and
Lipschitz continuous dependence of the solutions on the data
exploiting the results in~\cite[Theorem~2.5]{ColomboSchleper}. Note
however that the estimates provided therein, differently from the ones
presented here, are not uniform in $\kappa$.

We now pass to the key limit $\kappa \to 0$.

\begin{theorem}
  \label{thm:limit}
  Fix the total mass of the liquid $m>0$ and a pressure $p_{o}
  >0$. Let $P, P_g$ satisfy~\textbf{(P)}, define $\mathcal{T}_{g}$ as
  in~\eqref{eq:Tproperties} and $\mathcal{T}_{\kappa}$ as
  in~\eqref{eq:tkfamily}. Let $\delta$, $\Delta$, $L$ and $\kappa_{*}$
  be as in Theorem~\ref{thm:kappa}. For any $v_o \in \reali$ and
  $(\tilde p, \tilde v) \in \Lloc1 (\reali; \reali^+ \times \reali)$
  satisfying
  \begin{equation}
    \label{eq:questa}
    \norma{\tilde p - p_o}_{\L\infty (\reali; \reali)} < \delta \,,
    \qquad
    \tv\left(\tilde p\right)+\tv\left(\tilde v\right)\leq \delta
    \quad \mbox{ and } \quad
    \tilde v(z) = v_{o} \quad\forall z\in [0,m] \,,
  \end{equation}
  the Cauchy problem for~\eqref{eq:finalequation} with initial datum
  $(\tilde p, \tilde v)$ admits for any $\kappa \in \left]0,
    \kappa^*\right[$ a weak entropy solution
  $\left(p^{\kappa},v^{\kappa}\right)$
  satisfying~\eqref{eq:THMtimeestimates} --
  \eqref{eq:THMspaceestimates} -- \eqref{eq:THMspaceestimates2} --
  \eqref{eq:spaceestimatesAllLine}.

  Moreover, there exist functions
  \begin{displaymath}
    \begin{array}{rcl@{\qquad\qquad}rcl}
      p^{*} & \in &
      \C0\left(\reali^+; (\Lloc1 \cap \BV) (\mathcal{G}; \reali^+)\right),
      &
      p_l & \in &
      \L\infty (\reali^+ \times \mathcal{L}; \reali^+),
      \\
      v^{*} & \in &
      \C0\left(\reali^+; (\Lloc1 \cap \BV) (\reali; \reali^+)\right),
      &
      v_l & \in &
      \W{1}{\infty} (\reali^+; \reali),
    \end{array}
  \end{displaymath}
  such that $(p^*, v^*_{|\mathcal{G}})$ and $v_l$
  solve~\eqref{eq:IncompL} with initial datum
  \begin{eqnarray*}
    (p^*, v^*) (0,z) & = & (\tilde p, \tilde v) (z)
    \quad \mbox{ a.e. } z \in \mathcal{G}
    \\
    v_l (0) & = & v_o
  \end{eqnarray*}
  in the sense of Definition~\ref{def:sol2}. Up to subsequences, as
  $\kappa\to 0$,
  \begin{equation}
    \label{eq:vconvergence}
    \begin{array}{@{}l@{}}
      \begin{array}{@{}rclrll}
        v^{\kappa}\left(t,\cdot\right)
        & \to
        & v^{*}\left(t,\cdot\right)
        & \mbox{ in }
        & \Lloc1\left(\reali; \reali\right),
        & t\geq 0
        \\[3pt]
        v^{\kappa}\left(\cdot,z\right)
        & \to
        & v^{*}\left(\cdot,z\right)
        & \mbox{ in }
        & \Lloc1(\reali^{+}; \reali),
        & z\in\reali
      \end{array}
      \\[12pt]
      \begin{array}{@{}lcllcll@{}}
        \tv \left(v^{*}(t,\cdot),\reali\right)
        & \leq &
        \Delta,
        &
        \int_{\reali}\left|v^{*}
          (t_{2},z)-v^{*}(t_{1},z)\right| \d{z}
        & \leq &
        L\left|t_{2}-t_{1}\right|,
        & t,t_{1},t_{2}\geq 0,
        \\[6pt]
        \tv\left(v^{*}(\cdot,z),\reali^{+}\right)
        & \leq &
        \Delta,
        &
        \int_{\reali^+}\left|v^{*}
          (t,z_{2})-v^{*}(t,z_{1})\right|\d{t}
        & \leq &
        L\left|z_{2}-z_{1}\right|,
        & z,z_{1},z_{2}\in\reali,
      \end{array}
      \\[12pt]
      v^{*}\left(t,z\right) = v_{l}(t),
      \mbox{ a.e. }(t,z)\in \reali^+ \times \mathcal{L}.
    \end{array}
  \end{equation}
  \begin{equation}
    \label{eq:pconvergence}
    \begin{array}{@{}l@{}}
      \begin{array}{@{}rclrll}
        p^{\kappa}\left(t,\cdot\right)
        & \to
        & p^{*}\left(t,\cdot\right)
        & \mbox{ in }
        & \Lloc1(\mathcal{G}; \reali),
        & t\geq 0
        \\[3pt]
        p^{\kappa}\left(\cdot,z\right)
        & \to
        & p^{*}\left(\cdot,z\right)
        & \mbox{ in }
        & \Lloc1(\reali^{+}; \reali),
        & z\in\mathcal{G}
      \end{array}
      \\[12pt]
      \begin{array}{@{}lcllcll@{}}
        \tv \left(p^{*}(t,\cdot),\mathcal{G}\right)
        & \leq &
        \Delta,
        &
        \int_{\mathcal{G}}\left|p^{*}
          (t_{2},z)-p^{*}(t_{1},z)\right| \d{z}
        & \leq &
        L\left|t_{2}-t_{1}\right|,
        & t,t_{1},t_{2}\geq 0,
        \\[6pt]
        \tv\left(p^{*}(\cdot,z),\reali^{+}\right)
        & \leq &
        \Delta,
        &
        \int_{\reali^+}\left|p^{*}
          (t,z_{2})-p^{*}(t,z_{1})\right|\d{t}
        & \leq &
        L\left|z_{2}-z_{1}\right|,
        & z,z_{1},z_{2}\in\mathcal{G},
      \end{array}
      \\[12pt]
      p^{\kappa}(\cdot,\cdot) \quad \wsto \quad  p_{l}(\cdot,\cdot),\quad
      \mbox{ in } \quad \L\infty(\mathcal{L}\times\reali^{+}; \reali^+),
      \\[3pt]
      p_{l}(t,z) \quad = \quad
      \left(1-\frac{z}{m}\right) p^{*}(t,0-) + \frac{z}{m}p^{*}(t,m+),
      \mbox{ a.e. } (t,z) \in \reali^+ \times \mathcal{L} \,,
      \\[3pt]
      \tau^{\kappa}\left(\cdot,\cdot\right) \quad \to \quad \bar \tau,
      \mbox{ uniformly  in }
      \mathcal{L}\times \reali^{+}.
    \end{array}
  \end{equation}
  where the specific volume is $\tau^{\kappa}(t,z) =
  \mathcal{T}_{\kappa} \left(z,p^{\kappa}(t,z)\right)$.
\end{theorem}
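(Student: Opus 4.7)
The plan is to leverage the $\kappa$--uniform estimates of Theorem~\ref{thm:kappa} to extract convergent subsequences, then pass to the limit in the equations and in the interface conditions. Under hypothesis~\eqref{eq:questa}, $\tv(\tilde v, \mathcal{L}) = 0$ and so ${\wtv}_\kappa (\tilde p, \tilde v) = \tv(\tilde p, \reali) + \tv(\tilde v, \mathcal{G}) \le \delta$, whence Theorem~\ref{thm:kappa} produces, uniformly in $\kappa \in \left]0, \kappa_*\right[$, weak entropy solutions $(p^\kappa, v^\kappa)$ satisfying~\eqref{eq:THMtimeestimates}--\eqref{eq:spaceestimatesAllLine}. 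Helly's theorem with a diagonal extraction then furnishes a subsequence (not relabelled) and limits $v^*$ on $\reali$, $p^*$ on $\mathcal{G}$ such that $v^\kappa(t, \cdot) \to v^*(t, \cdot)$ in $\Lloc1(\reali)$ and $p^\kappa(t, \cdot) \to p^*(t, \cdot)$ in $\Lloc1(\mathcal{G})$ for every $t$; the symmetric role of $z$ and $t$ in the Lagrangian $p$--system, combined with~\eqref{eq:THMspaceestimates2}, gives the analogous convergences for fixed $z \in \mathcal{G}$, and lower semicontinuity transfers the BV and Lipschitz bounds to the limits as in~\eqref{eq:vconvergence}--\eqref{eq:pconvergence}. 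Since $p^\kappa|_\mathcal{L}$ is uniformly bounded in $\L\infty$ (by~\eqref{eq:delta1} together with the liquid BV bound in~\eqref{eq:THMtimeestimates}), a further weak-$\star$ extraction gives $p^\kappa \wsto p_l$ on $\mathcal{L} \times \reali^+$. Finally, $\tau^\kappa \to \bar\tau$ uniformly on $\mathcal{L} \times \reali^+$ is immediate from~\eqref{eq:tkfamily} and the $\L\infty$ bound on $p^\kappa$.

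Next I identify the limits. On the gas region, $(p^*, v^*|_\mathcal{G})$ is a weak entropy solution of the gas $p$--system by the standard BV stability argument: $\Lloc1$ convergence is strong enough to pass the smooth flux $\mathcal{T}_g$ and the convex entropy inequalities to the limit. In the liquid, the bound $\tv(v^\kappa(t, \cdot), \mathcal{L}) \le \kappa \Delta \to 0$ in~\eqref{eq:THMtimeestimates} forces $v^*(t, \cdot)$ to be constant in $z$ on $\mathcal{L}$ for a.e.~$t$; denote this common value by $v_l(t)$. The interface identities $v^\kappa(t, 0-) = v^\kappa(t, 0+)$ and $v^\kappa(t, m-) = v^\kappa(t, m+)$ pass to the limit through the strong $\Lloc1$--in--$t$ trace convergence on $\mathcal{G}$, yielding $v^*(t, 0-) = v^*(t, m+) = v_l(t)$. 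The ODE for $v_l$ follows by integrating $\partial_t v^\kappa + \partial_z p^\kappa = 0$ over $(t_1, t_2) \times \mathcal{L}$,
\begin{equation*}
\int_0^m \bigl[v^\kappa(t_2, z) - v^\kappa(t_1, z)\bigr] \, \d{z} = \int_{t_1}^{t_2} \bigl[p^\kappa(t, 0+) - p^\kappa(t, m-)\bigr] \, \d{t},
\end{equation*}
rewriting the right-hand side by Rankine--Hugoniot as the integral of the gas-side traces $p^\kappa(t, 0-) - p^\kappa(t, m+)$, and passing to the limit using the strong trace convergence on $\mathcal{G}$; this gives $v_l \in \W{1}{\infty}$ with $\dot v_l(t) = [p^*(t, 0-) - p^*(t, m+)]/m$.

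The main obstacle is identifying $p_l$ as the claimed affine interpolation, since convergence of $p^\kappa$ on $\mathcal{L}$ is only weak-$\star$. Testing the momentum equation against $\phi \in \Cc\infty(\reali^+ \times \mathcal{L})$ and integrating by parts,
\begin{equation*}
\int_0^\infty \!\! \int_0^m p^\kappa \, \partial_z \phi \, \d{z} \, \d{t} = -\int_0^\infty \!\! \int_0^m v^\kappa \, \partial_t \phi \, \d{z} \, \d{t},
\end{equation*}
the right-hand side converges to $-\int\!\!\int v_l(t) \, \partial_t \phi \, \d{z} \, \d{t} = \int\!\!\int \dot v_l(t) \, \phi \, \d{z} \, \d{t}$ by the $\Lloc1$ convergence of $v^\kappa$ to $v_l$ on $\mathcal{L}$, while the left-hand side converges to $\int\!\!\int p_l \, \partial_z \phi \, \d{z} \, \d{t}$ by weak-$\star$ convergence. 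Hence $\partial_z p_l = -\dot v_l$ distributionally on $\reali^+ \times \mathcal{L}$, so $p_l(t, z) = p_l(t, 0+) - z \, \dot v_l(t)$ for a.e.~$(t, z)$. The delicate remaining step is to match the interface traces $p_l(t, 0+) = p^*(t, 0-)$ and $p_l(t, m-) = p^*(t, m+)$; this is the main technical point, and I plan to handle it by testing the momentum equation against functions straddling the interfaces and exploiting the Rankine--Hugoniot continuity $p^\kappa(t, 0+) = p^\kappa(t, 0-)$, $p^\kappa(t, m-) = p^\kappa(t, m+)$ together with the strong trace convergence of $p^\kappa$ on the gas side. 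Substituting the ODE $\dot v_l(t) = [p^*(t, 0-) - p^*(t, m+)]/m$ into the affine formula for $p_l$ then yields exactly the linear interpolation claimed in~\eqref{eq:pconvergence}.
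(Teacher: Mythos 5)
Your proposal follows essentially the same route as the paper: uniform bounds from Theorem~\ref{thm:kappa}, Helly compactness for $v^{\kappa}$ on $\reali$ and $p^{\kappa}$ on $\mathcal{G}$, the vanishing liquid velocity variation to pin $v^{*}=v_{l}(t)$, weak-$\star$ compactness of $p^{\kappa}$ on $\mathcal{L}$, the integrated momentum balance over $\mathcal{L}$ for the $v_{l}$ ODE, the distributional relation $\partial_{z}p_{l}=-\dot v_{l}$, and Rankine--Hugoniot matching of the pressure traces across $z=0$, $z=m$. The one step you leave as a sketch --- matching $p_{l}(t,0+)=p^{*}(t,0-)$ and $p_{l}(t,m-)=p^{*}(t,m+)$ --- is done in the paper by passing to the limit in~\eqref{eq:finalequation} in distribution on all of $\reali^{+}\times\reali$ (which weak-$\star$ convergence of $p^{\kappa}$ already permits) and then reading off the Rankine--Hugoniot conditions for the limit system at the fixed interfaces; this is exactly the mechanism your ``test against functions straddling the interface'' plan encodes, so your approach would close correctly.
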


\par From the Eulerian coordinates' point of view, the locations of
the boundaries of the liquid phase can be recovered through a time
integration. Let $x = a_{o}$ be the initial location of the left
interface that we keep fixed with respect to $\kappa$.  Since in
Theorem~\ref{thm:limit} the initial pressure is chosen independently
of $\kappa$, the initial specific volume in the liquid is given by
$\tilde \tau^{\kappa}(z) = \mathcal{T}\left(\bar p +
  \kappa^{2}\left(\tilde p(z) - \bar p\right)\right)$, which may
depend on $\kappa$. The total mass $m$ of the liquid is fixed. Hence,
the initial location of the right interface in general depends on
$\kappa$, say $x=b_{o}^{\kappa}$. Since $\tilde \tau^{\kappa}(z)\to
\bar \tau$ as $\kappa\to 0$, we have $b_{o}^\kappa\to
b_{o}=a_{o}+m\bar \tau$. Note however that in the particular case of
constant initial pressure $\tilde p(z)=\bar p$ in the liquid, also
$b_o^\kappa$ turns out to be independent of $\kappa$.

Let $a^{\kappa}(t)$ and $b^{\kappa}(t)$ be the locations of the
interfaces (in Eulerian coordinates) at time $t$ for positive
$\kappa$, while $a(t)$ and $b(t)$ be the corresponding limits as
$\kappa \to 0$. Then, we have:
\begin{equation}
  \label{eq:eulerinterfaces}
  \begin{array}{rcl@{\qquad\qquad}rcl}
    a^{\kappa}(t) & = &
    \displaystyle
    a_{o} + \int_{0}^{t}v^{\kappa}\left(\xi,0\right)\; \d\xi
    &
    a(t) & = &
    \displaystyle
    a_{o} + \int_{0}^{t}v_{l}\left(\xi\right)\;d\xi
    \\[5pt]
    b^{\kappa}(t) & = &
    \displaystyle
    b_{o}^\kappa + \int_{0}^{t}v^{\kappa}\left(\xi,m\right) \; \d\xi
    &
    b(t) & = &
    \displaystyle
    b_{o} + \int_{0}^{t}v_{l}\left(\xi\right)\; \d\xi\,.
  \end{array}
\end{equation}
Using Theorem~\ref{thm:limit} we can see that the boundaries of the
two phases are Lipschitz continuous functions of $t$. Moreover, as
$\kappa \to 0$, $a^{\kappa}\to a$ and $b^{\kappa}\to b$ uniformly on
bounded time intervals. An explicit expression for these boundaries
and their limit in a linear framework can be found
in~\cite{ColomboGuerraInc}.

\section{Technical Details}
\label{sec:TD}

Throughout, we suppose that $P, P_g$ in theorems~\ref{thm:kappa},
\ref{thm:limit} satisfy condition \textbf{(P)} and denote by $\O$ a
quantity that depends only on $P, P_g$ and on uniform bounds on the
initial data.

We define $\mathcal{T}, \mathcal{T}_\kappa, \mathcal{T}_g$ as
in~\eqref{eq:Tproperties}, \eqref{eq:tkfamily} and collect below a few
facts about the $p$-system in Lagrangian coordinates using the $(p,v)$
plane. Consider first the gas phase, where
\begin{equation}
  \label{eq:lambdaGas}
  \left\{
    \begin{array}{ll}
      \partial_t \mathcal{T}_{g}(p) - \partial_z v = 0
      \\
      \partial_t v + \partial_z p = 0 \,,
    \end{array}
  \right.
  \quad \mbox{ with eigenvalues } \quad
  \begin{array}{rcl}
    \lambda_1^g(p,v)
    & = &
    -\sqrt{-\dfrac{1}{\mathcal{T}'_{g}(p)}}
    \\
    \lambda_2^g(p,v)
    & = &
    \sqrt{-\dfrac{1}{\mathcal{T}'_{g}(p)}}
  \end{array}
\end{equation}
so that the Lax shock and rarefaction curves are, see
also~\cite{ColomboGuerraSchleper},
\begin{equation}
  \label{eq:LaxGas}
  \begin{array}{rcl}
    V_1^g (p; p_o,v_o)
    & = &
    \left\{
      \begin{array}{l@{\qquad}r@{\;}c@{\;}l}
        \displaystyle
        v_o - \int_{p_o}^p \sqrt{-\mathcal{T}'_g (\xi)} \d{\xi}
        & p & < & p_o
        \\
        \displaystyle
        v_o
        -
        \sqrt{-\left(\mathcal{T}_g (p) -\mathcal{T}_g (p_o)\right) \left(p-p_o\right)}
        & p & \geq & p_o
      \end{array}
    \right.
    \\
    V_2^g (p; p_o,v_o)
    & = &
    \left\{
      \begin{array}{l@{\qquad}r@{\;}c@{\;}l}
        \displaystyle
        v_o
        -
        \sqrt{-\left(\mathcal{T}_g (p) -\mathcal{T}_g (p_o)\right) \left(p-p_o\right)}
        & p & < & p_o
        \\
        \displaystyle
        v_o + \int_{p_o}^p \sqrt{-\mathcal{T}'_g (\xi)} \d{\xi}
        & p & \geq & p_o \,.
      \end{array}
    \right.
  \end{array}
\end{equation}
Similarly, in the liquid phase we have
\begin{equation}
  \label{eq:lambdaLiquid}
  \left\{
    \begin{array}{ll}
      \partial_t \mathcal{T}_{\kappa}(p) - \partial_z v = 0
      \\
      \partial_t v + \partial_z p = 0 \,.
    \end{array}
  \right.
  \quad \mbox{ with eigenvalues } \quad
  \begin{array}{rcl}
    \lambda_1^\kappa(p,v)
    & = &
    -\dfrac{1}{\kappa}
    \sqrt{-\dfrac{1}{\mathcal{T}'\left(\bar p + \kappa^{2}\left(p-\bar p\right)\right)}}
    \\
    \lambda_2^\kappa(p,v)
    & = &
    \dfrac{1}{\kappa}
    \sqrt{-\dfrac{1}{\mathcal{T}'\left(\bar p + \kappa^{2}\left(p-\bar p\right)\right)}}
  \end{array}
\end{equation}
and the Lax curves are
\begin{equation}
  \label{eq:LaxLiquid}
  \begin{array}{rcl}
    V_1^\kappa (p; p_o,v_o)
    & = &
    \left\{
      \begin{array}{l@{\qquad}r@{\;}c@{\;}l}
        \displaystyle
        v_o - \int_{p_o}^p \sqrt{-\mathcal{T}'_\kappa (\xi)} \d{\xi}
        & p & < & p_o
        \\
        \displaystyle
        v_o
        -
        \sqrt{-\left(\mathcal{T}_\kappa (p) -\mathcal{T}_\kappa (p_o)\right) \left(p-p_o\right)}
        & p & \geq & p_o
      \end{array}
    \right.
    \\
    V_2^\kappa (p; p_o,v_o)
    & = &
    \left\{
      \begin{array}{l@{\qquad}r@{\;}c@{\;}l}
        \displaystyle
        v_o
        -
        \sqrt{-\left(\mathcal{T}_\kappa (p) -\mathcal{T}_\kappa (p_o)\right) \left(p-p_o\right)}
        & p & < & p_o
        \\
        \displaystyle
        v_o + \int_{p_o}^p \sqrt{-\mathcal{T}'_\kappa (\xi)} \d{\xi}
        & p & \geq & p_o \,.
      \end{array}
    \right.
  \end{array}
\end{equation}

Below we systematically use the parameterizations
\begin{equation}
  \label{eq:para_sigma}
  \sigma_i \to V_i^g (p_o+\sigma_i; p_o,v_o)
  \qquad \mbox{ and } \qquad
  \sigma_i \to V_i^\kappa (p_o+\sigma_i;p_o,v_o)
\end{equation}
of the $i$--Lax curve, $\sigma_i$ being a pressure
difference. Therefore, differently from the usual habit, we have that
\begin{table}[!h]
  \centering
  \begin{tabular}{|l||l|}
    \hline
    \hfil $i=1$
    &
    \hfil$i=2$
    \\\hline
    $\sigma_1 < 0$ $\Rightarrow$ \mbox{rarefaction}
    &
    $\sigma_2 < 0$ $\Rightarrow$ \mbox{shock}
    \\\hline
    $\sigma_1 > 0$ $\Rightarrow$ \mbox{shock}
    &
    $\sigma_2 > 0$ $\Rightarrow$ \mbox{rarefaction}
    \\\hline
  \end{tabular}
  \caption{Types of waves and the signs of the corresponding parameters as in~\eqref{eq:LaxGas}, \eqref{eq:LaxLiquid}.}
  \label{tab:signs}
\end{table}

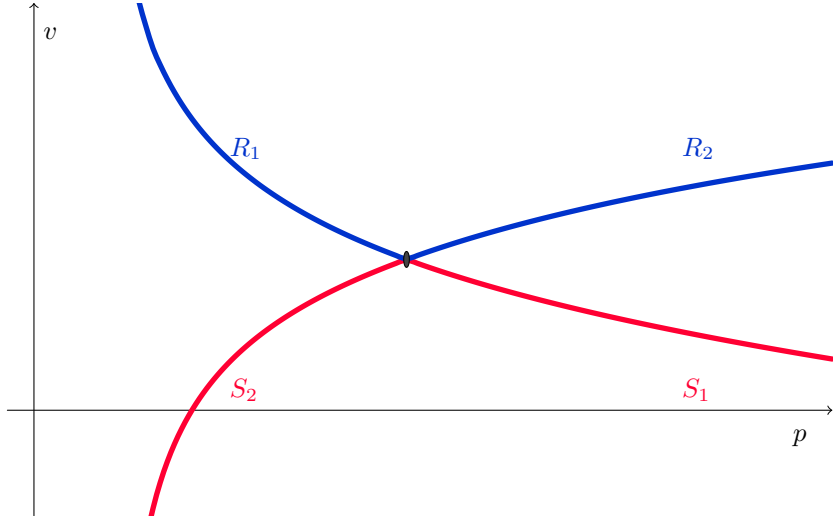
\begin{figure}[!h]
  \centering \definecolor{qqttcc}{rgb}{-0.5,0.2,0.8}
  \definecolor{ffqqtt}{rgb}{1.0,0.0,0.2}
  \definecolor{uuuuuu}{rgb}{0.26666666666666666,0.26666666666666666,0.26666666666666666}
  \begin{tikzpicture}[line cap=round,line
    join=round,yscale=2,xscale=0.7]
    \draw[->,color=black] (-0.5,0.0) -- (15.0,0.0); \clip(-0.5,-0.7)
    rectangle (15.0,2.7); \draw[line
    width=2.0pt,color=ffqqtt,smooth,samples=100,domain=1.7:15.0]
    plot(\x,{1.0-(1.4*abs(7.0-(\x)))/sqrt((3.0+1.4^(2.0)*((\x)-3.0))*(3.0+1.4^(2.0)*(7.0-3.0)))});
    \draw[line
    width=2.0pt,color=qqttcc,smooth,samples=100,domain=1.7:15.0]
    plot(\x,{1.0+1.0/1.4*abs(ln(1.0+(1.4^(2.0)*((\x)-7.0))/(3.0+1.4^(2.0)*(7.0-3.0))))});
    \begin{scriptsize}
      \draw [fill=uuuuuu] (7.0,1.0) circle (1.5pt);
    \end{scriptsize}
    \draw[->,color=black] (0.0,-0.7) -- (0.0,2.7); \draw[color=black]
    (14.0843414275202356,-0.3) node [anchor=south west] {$p$};
    \draw[color=black] (0,2.4) node [anchor=south west] {$v$};
    \draw[color=qqttcc] (3.5,1.6) node [anchor=south west] {$R_{1}$};
    \draw[color=qqttcc] (12.0,1.6) node [anchor=south west] {$R_{2}$};
    \draw[color=ffqqtt] (3.5,0.0) node [anchor=south west] {$S_{2}$};
    \draw[color=ffqqtt] (12.0,0.0) node [anchor=south west] {$S_{1}$};
  \end{tikzpicture}
  \caption{Lax curves~\eqref{eq:LaxLiquid} in the $(p,v)$--plane.}
  \label{fig:LaxCurves}
\end{figure}

\begin{lemma}
  \label{lem:para}
  Fix $L,l$ with $L>l>0$ and $\kappa \in \left]0, 1 \right]$. The Lax
  curves~\eqref{eq:LaxLiquid} admit the representation
  \begin{equation}
    \label{eq:para}
    \begin{array}{rcl}
      V_1^\kappa (p;p_o,v_o)
      & = &
      v_o - \kappa \, (p-p_o) \, F\left(\Pi_\kappa (p), \Pi_\kappa (p_o)\right)
      \\[6pt]
      V_2^\kappa (p;p_o,v_o)
      & = &
      v_o + \kappa \, (p-p_o) \, F\left(\Pi_\kappa (p_o), \Pi_\kappa (p)\right)
    \end{array}
  \end{equation}
  where
  \begin{eqnarray}
    \nonumber
    \Pi_\kappa (p) & = & \bar p + \kappa^2 \, (p-\bar p) \,,
    \\
    \label{eq:F}
    F (x,y) & = &
    \left\{
      \begin{array}{l@{\qquad}r@{\;}c@{\;}l}
        \displaystyle
        \int_0^1
        \sqrt{-\mathcal{T}'\left(\theta x + (1-\theta)y\right)}
        \d\theta
        & x & < & y \,,
        \\[12pt]
        \displaystyle
        \sqrt{-\mathcal{T}'\left(x\right)}
        & x & = & y \,,
        \\[12pt]
        \displaystyle
        \sqrt{\int_0^1 -\mathcal{T}'\left(\theta x + (1-\theta)y\right) \d\theta}
        & x & > & y \,.
      \end{array}
    \right.
  \end{eqnarray}
  Moreover,
  \begin{enumerate}
  \item \label{it:C1} the function $F$ is of class $\C{1,1}
    ([l,L]^2;\reali)$;
  \item \label{it:Ctanto} both restrictions $F_{\strut\vert x\leq y}$
    and $F_{\strut\vert x \geq y}$ are of class $\C2([l,L]^2;\reali)$;
  \item \label{it:Bound}for $x,y \in [l,L]$, $F (x,y) \in
    \left[\sqrt{-\mathcal{T}' (L)}, \sqrt{-\mathcal{T}' (l)}\right]$.
  \end{enumerate}
\end{lemma}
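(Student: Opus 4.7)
The plan is to verify the explicit representation~\eqref{eq:para} by direct computation, and then to read off the regularity and bounds for $F$ from condition~\textbf{(P)}.

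First, I would verify formula~\eqref{eq:para} for $V_1^\kappa$ in the two separate regimes. Since $\mathcal{T}_\kappa = \mathcal{T} \circ \Pi_\kappa$ with $\Pi_\kappa$ affine of slope $\kappa^2$, we have $\mathcal{T}_\kappa'(\xi) = \kappa^2 \mathcal{T}'(\Pi_\kappa(\xi))$, hence $\sqrt{-\mathcal{T}_\kappa'(\xi)} = \kappa\sqrt{-\mathcal{T}'(\Pi_\kappa(\xi))}$. In the rarefaction case $p<p_o$, the substitution $\xi = \theta p + (1-\theta) p_o$, together with the identity $\theta\Pi_\kappa(p)+(1-\theta)\Pi_\kappa(p_o)=\Pi_\kappa(\theta p + (1-\theta)p_o)$ (which holds because $\Pi_\kappa$ is affine), turns the integral $-\int_{p_o}^p\sqrt{-\mathcal{T}_\kappa'(\xi)}\,d\xi$ into $-\kappa(p-p_o)\int_0^1\sqrt{-\mathcal{T}'(\theta\Pi_\kappa(p)+(1-\theta)\Pi_\kappa(p_o))}\,d\theta$, which is precisely $-\kappa(p-p_o)\,F(\Pi_\kappa(p),\Pi_\kappa(p_o))$ in the case $x<y$. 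In the shock case $p\geq p_o$, I would write $\mathcal{T}_\kappa(p)-\mathcal{T}_\kappa(p_o)=\kappa^2\int_{p_o}^p\mathcal{T}'(\Pi_\kappa(\xi))\,d\xi$, apply the same change of variable to pull out the factor $(p-p_o)$, and then take the square root to get $\kappa(p-p_o)\,F(\Pi_\kappa(p),\Pi_\kappa(p_o))$ with $F$ in the $x\geq y$ form. The $V_2^\kappa$ identity is strictly analogous.

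Next, item~\ref{it:Bound}. From~\textbf{(P)} applied to $P$, I would note $\mathcal{T}'= 1/P'<0$ and $\mathcal{T}''=-P''\mathcal{T}'/(P')^2>0$, so $-\mathcal{T}'$ is a strictly positive, strictly decreasing function on $[l,L]$. For $x\leq y$, each integrand $\sqrt{-\mathcal{T}'(\theta x+(1-\theta)y)}$ is sandwiched between $\sqrt{-\mathcal{T}'(L)}$ and $\sqrt{-\mathcal{T}'(l)}$, and the same monotonicity bounds the squared expression in the $x\geq y$ formula. This proves item~\ref{it:Bound} and, crucially, keeps $-\mathcal{T}'$ bounded away from $0$, so that $\sqrt{-\mathcal{T}'}$ inherits $C^2$ smoothness from $\mathcal{T}\in C^3$.

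For item~\ref{it:Ctanto}, I would differentiate under the integral sign on each closed half-triangle. On $\{x\leq y\}$, $F$ is the parameter integral of a $C^2$ integrand, so it is $C^2$ up to the diagonal; on $\{x\geq y\}$, the integral inside the square root is $C^2$ and is bounded below by $-\mathcal{T}'(L)>0$ by item~\ref{it:Bound}, so composition with $\sqrt{\,\cdot\,}$ preserves $C^2$. The two branches agree on the diagonal: both give $\sqrt{-\mathcal{T}'(y)}$ when $x=y$.

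The main step — and the one I expect to be most delicate — is item~\ref{it:C1}, i.e.\ the $C^{1,1}$ gluing along the diagonal. I would compute the one-sided partial derivatives at a diagonal point $(y,y)$. On the side $x<y$,
\begin{equation*}
\partial_x F(x,y) = \int_0^1 \theta\,\frac{-\mathcal{T}''(\theta x+(1-\theta)y)}{2\sqrt{-\mathcal{T}'(\theta x+(1-\theta)y)}}\,d\theta,
\end{equation*}
which at $x=y$ evaluates to $-\mathcal{T}''(y)/(4\sqrt{-\mathcal{T}'(y)})$. On the side $x>y$, differentiating $F^2(x,y)=\int_0^1 -\mathcal{T}'(\theta x+(1-\theta)y)\,d\theta$ and using the diagonal value $F(y,y)=\sqrt{-\mathcal{T}'(y)}$ gives the identical limit $-\mathcal{T}''(y)/(4\sqrt{-\mathcal{T}'(y)})$, and symmetrically for $\partial_y F$. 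Thus $\nabla F$ is continuous across the diagonal. Since each half is $C^2$ on $[l,L]^2$ (hence has Lipschitz gradient there) and the gradients agree on the common boundary, the global $F$ is $C^1$ on $[l,L]^2$ with Lipschitz gradient, i.e.\ $C^{1,1}$, as claimed. Second derivatives may jump across the diagonal, which is precisely why $C^2$ globally cannot be expected and $C^{1,1}$ is the sharp statement.
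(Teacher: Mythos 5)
Your proposal is correct and carries out precisely the "standard computations" the paper alludes to without spelling out: the change of variable exploiting the affine structure of $\Pi_\kappa$ yields the representation, monotonicity of $-\mathcal{T}'$ gives the bounds, differentiation under the integral (resp.\ composition with $\sqrt{\cdot}$ on a region where the argument is bounded away from zero) gives the $\C2$ regularity of each half, and the matching one-sided partials $-\mathcal{T}''(y)/\left(4\sqrt{-\mathcal{T}'(y)}\right)$ on the diagonal give the $\C{1,1}$ gluing. Nothing is missing, and the closing remark about second derivatives genuinely jumping across the diagonal is accurate.
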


\noindent The proof follows from standard computations. A property
that plays a key role in the sequel is that the function $F$ above is
independent of $\kappa$.

Call $F_g$ the function obtained Replacing $\mathcal{T}$ with
$\mathcal{T}_g$ in~\eqref{eq:F}. Then, Lemma~\ref{lem:para} in the
case $\kappa = 1$, yields a representation for the Lax
curve~\eqref{eq:LaxGas} in the gas phase.

\paragraph{Riemann Solvers.} The wave front tracking algorithm below
is, as usual, based on the (possibly, approximate) solutions to
Riemann problems.

Throughout, we fix a reference pressure $p_o>0$. By Galileian
invariance, in the statements below only speed differences will be
relevant.

\begin{lemma}
  \label{lem:RS_liquid}
  There exists a positive $\bar\delta$ such that for all $\kappa \in
  \left]0, 1\right]$ and for any couple of states $(p^l, v^l)$, $(p^r,
  v^r)$ with $\modulo{p^l-p_o} + \modulo{p^r-p_o} < \bar \delta$ and
  $\modulo{v^r-v^l} < \kappa \, \bar \delta$, there exists a unique
  state $(p^m, v^m)$ satisfying
  \begin{displaymath}
    V_1^\kappa (p^m;p^l,v^l) = v^m
    \qquad \mbox{ and } \qquad
    V_2^\kappa (p^r; p^m, v^m) = v^r \,.
  \end{displaymath}
  Moreover,
  \begin{equation}
    \label{eq:es1}
    \modulo{p^l - p^m}
    +
    \modulo{p^m - p^r}
    \leq
    \O
    \left(
      \modulo{p^l - p^r}
      +
      \frac{\modulo{v^l - v^r}}{\kappa}
    \right) .
  \end{equation}
\end{lemma}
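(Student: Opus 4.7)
\smallskip\noindent\textbf{Proof proposal.}
The plan is to substitute the representation~\eqref{eq:para} for the Lax curves into the two Riemann conditions, eliminate the intermediate velocity $v^m$, and reduce the problem to a single scalar equation in $p^m$ whose solvability can be established by a monotonicity argument uniform in $\kappa$. Writing the two conditions
\begin{equation*}
  v^m = v^l - \kappa\,(p^m-p^l)\,F\!\left(\Pi_\kappa(p^m),\Pi_\kappa(p^l)\right),
  \quad
  v^r = v^m + \kappa\,(p^r-p^m)\,F\!\left(\Pi_\kappa(p^m),\Pi_\kappa(p^r)\right),
\end{equation*}
and subtracting, after dividing by $\kappa$, yields the scalar equation $G_\kappa(p^m)=(v^r-v^l)/\kappa$ with
\begin{equation*}
  G_\kappa(p) \;=\; -(p-p^l)\,F\!\left(\Pi_\kappa(p),\Pi_\kappa(p^l)\right)
  + (p^r-p)\,F\!\left(\Pi_\kappa(p),\Pi_\kappa(p^r)\right).
\end{equation*}
The crucial observation, which motivates the parametrization of Lemma~\ref{lem:para}, is that the right-hand side $(v^r-v^l)/\kappa$ is by hypothesis bounded by $\bar\delta$, and $F$ itself is independent of $\kappa$.

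Next I would study $G_\kappa$ on an interval $I=[p_o-M\bar\delta,p_o+M\bar\delta]$ for a suitably large $M$, where $F$ remains uniformly bounded above and below by the positive constants of item~\ref{it:Bound} in Lemma~\ref{lem:para}. Using item~\ref{it:C1}, so that $\partial_1 F$ is bounded, I would compute
\begin{equation*}
  G_\kappa'(p)
  \;=\;
  -F\!\left(\Pi_\kappa(p),\Pi_\kappa(p^l)\right)
  -F\!\left(\Pi_\kappa(p),\Pi_\kappa(p^r)\right)
  +\kappa^{2}\bigl[(p^r-p)\,\partial_1 F_r - (p-p^l)\,\partial_1 F_l\bigr],
\end{equation*}
where the subscripts on $\partial_1 F$ indicate the relevant base points. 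The first two summands are bounded above by $-2\sqrt{-\mathcal{T}'(L)}$, while the bracketed perturbation is $\O\,\kappa^{2}\bar\delta$. Hence for $\bar\delta$ small enough (and uniformly for $\kappa\in\left]0,1\right]$), one has $G_\kappa'(p)\leq -c<0$ on $I$, with $c$ depending only on $\mathcal{T}$ and on the reference pressure $p_o$.

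Uniform strict monotonicity together with continuity then gives existence and uniqueness of $p^m\in I$ by the intermediate value theorem: it suffices to check that the image $G_\kappa(I)$ contains the interval $[-\bar\delta,\bar\delta]$ (which holds as soon as $M\geq 1/c$, since $G_\kappa(p_o\pm M\bar\delta)$ differs from $G_\kappa(p_o)$ by at least $Mc\bar\delta$ in magnitude, while $G_\kappa(p_o)=\O\,\bar\delta$). The intermediate velocity $v^m$ is then determined by the first Riemann condition. Finally, to obtain~\eqref{eq:es1}, I would apply the mean value theorem to $G_\kappa$ between $p^l$ and $p^m$, using $\modulo{G_\kappa'}\geq c$:
\begin{equation*}
  \modulo{p^m-p^l}
  \;\leq\;
  \tfrac{1}{c}\,\modulo{G_\kappa(p^m)-G_\kappa(p^l)}
  \;\leq\;
  \tfrac{1}{c}\left(
    \tfrac{\modulo{v^r-v^l}}{\kappa}
    + \O\,\modulo{p^r-p^l}
  \right),
\end{equation*}
and analogously between $p^r$ and $p^m$. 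Summing delivers~\eqref{eq:es1}. The main obstacle is precisely the uniformity in $\kappa$, which rests entirely on the fact, emphasized after Lemma~\ref{lem:para}, that $F$ is $\kappa$-independent, so that only the benign factor $\kappa^{2}$ in the argument of $\Pi_\kappa$ and in $G_\kappa'$ is affected by the singular limit.
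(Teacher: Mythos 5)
Your proof is correct and takes essentially the same route as the paper: both use the $\kappa$-independent representation of Lemma~\ref{lem:para} to reduce the Riemann problem to a single scalar equation $G=0$ in $p^m$, whose uniform-in-$\kappa$ solvability and Lipschitz estimate rest on $\partial_{p^m}G$ being bounded away from zero uniformly in $\kappa$ (the paper packages this as an application of the Implicit Function Theorem in all of $(p^m,p^l,p^r,\xi)$, while you unroll it explicitly as strict monotonicity of $G_\kappa$ plus the mean value theorem). One small point: the condition $M\geq 1/c$ should also absorb the implicit constant in $\modulo{G_\kappa(p_o)}=\O\bar\delta$, i.e.\ $M\geq (1+\O)/c$, but this is immaterial.
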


A qualitative justification of~\eqref{eq:es1} is provided in
Figure~\ref{fig:estriemann}.  In the liquid region, the Lax curves
have a slope of order $\kappa$ (see Lemma~\ref{lem:para}), hence a
jump $\Delta v$ in the velocity generates waves of order $\Delta
v/\kappa$.
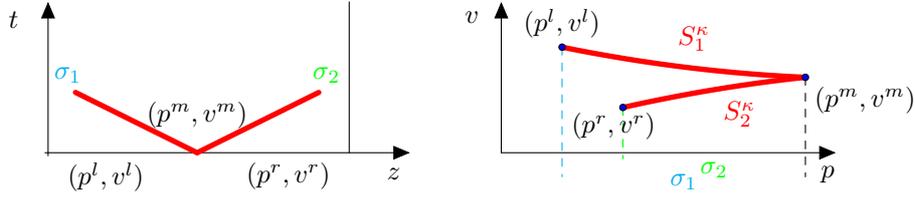
\begin{figure}[!ht]
  \centering
  \begin{tikzpicture}[line cap=round,line join=round,>=triangle
    45,xscale=0.4,yscale=0.4]
    \draw [->] (-4.0,-1.0) -- (8.0,-1.0); \draw [->] (-3.9,-1.1) --
    (-3.9,4.0); \draw (6.0,-1.0) -- (6.0,4.0); \draw (8,-1.2)
    node[anchor=north east] {$z$}; \draw (-5.5,4) node[anchor=north
    west] {$t$}; \draw (-2,-1) node[anchor=north] {$(p^l,v^l)$}; \draw
    (4,-1) node[anchor=north] {$(p^r,v^r)$}; \draw (1,1)
    node[anchor=north] {$(p^m,v^m)$}; \draw (21,1.5) node[anchor=north
    west] {$(p^m,v^m)$}; \draw [line width=2pt,color=red] (1,-1) --
    (-3,1); \draw [line width=2pt,color=red] (1,-1) -- (5,1); \draw
    [->] (11.0,-1.0) -- (22.0,-1.0); \draw [->] (11.0,-1.0) --
    (11.0,4.0); \draw (9.5,4) node[anchor=north west] {$v$}; \draw
    [line width=2.0pt,color=red] plot[domain=13:21,variable=\t]
    ({\t},{2.5 - (\t-13)/8 + 0.01 * (\t * \t - 34 * \t + 273)}); \draw
    [line width=2.0pt,color=red] plot[domain=15:21,variable=\t]
    ({\t},{0.5+(\t-15.)/ 6. - 0.01 * (\t * \t - 36 * \t + 315)});
    \draw [dashed] (21,1.5) -- (21,-1.8); \draw [dashed,color=green]
    (15,0.5) -- (15,-1.2); \draw [dashed,color=cyan] (13,2.5) --
    (13,-1.8); \draw (18,-1) node[anchor=north,color=green]
    {$\sigma_{2}$}; \draw (17,-1.4) node[anchor=north,color=cyan]
    {$\sigma_{1}$}; \draw (4.5,1) node[anchor=south west,color=green]
    {$\sigma_{2}$}; \draw (-4.,1) node[anchor=south west,color=cyan]
    {$\sigma_{1}$}; \draw [color=red](16.5,3.5) node[anchor=north
    west] {$S_1^{\kappa}$}; \draw [color=red](18.,1.)
    node[anchor=north west] {$S_2^{\kappa}$}; \draw
    (21.19999999999993,-1.1238532110091715) node[anchor=north west]
    {$p$}; \draw (13.0,0.6307339449541268) node[anchor=north west]
    {$(p^r,v^r)$}; \draw (13.0,2.5) node[anchor=south] {$(p^l,v^l)$};
    \begin{scriptsize}
      \draw [fill=blue] (13,2.5) circle (3pt); \draw [fill=blue]
      (15,0.5) circle (3pt); \draw [fill=blue] (21,1.5) circle (3pt);
    \end{scriptsize}
  \end{tikzpicture}
  \caption{Riemann problem in the liquid.  Left, in the $(t,z)$ plane
    and, right, in the $(p,v)$ plane:
    $\left|\sigma_{1}\right|+\left|\sigma_{2}\right|=\O \left( |p^l -
      p^r| + |v^l - v^r|/\kappa \right) .  $}
  \label{fig:estriemann}
\end{figure}

\begin{proof}
  Let $\xi = (v^r - v^l)/\kappa$.  We apply the Implicit Function
  Theorem to $G (p^m,p^l,p^r,\xi) = 0$ where
  \begin{displaymath}
    G (p^m,p^l,p^r,\xi) =
    \xi
    +
    (p^m-p^l) \,
    F\left(\Pi_\kappa (p^m), \Pi_\kappa (p^l)\right)
    -
    (p^r-p^m) \, F\left(\Pi_\kappa (p^m),\Pi_\kappa (p^r)\right)
  \end{displaymath}
  to find $p^m$ as a function of $(p^l,p^r,\xi)$, which is possible
  since the derivative $\partial_{p^m} G$ evaluated at $p^{m}=p^l =
  p^r = p_o$ and $\xi = 0$ is
  \begin{displaymath}
    \begin{split}
      \partial_{p^m} G (p_o,p_o,p_o,0) &= 2
      F\left(\Pi_{\kappa}\left(p_{o}\right),\Pi_{\kappa}\left(p_{o}\right)\right)\\
      &=2\sqrt{-\mathcal{T}'\left(\bar p + \kappa^{2}\left(p_{o}-\bar
            p\right)\right)}\ge2\sqrt{-\mathcal{T}'\left(\max\left\{\bar
            p,p_{o}\right\}\right)}>0
    \end{split}
  \end{displaymath}
  Note also that, in a neighborhood of
  $\left(p_{o},p_{o},p_{o},0\right)$, all second derivatives of $G$
  are bounded uniformly in $\kappa$, hence the domain of the implicit
  function contains a neighborhood of $(p_o,p_o,p_o,0)$ independent of
  $\kappa$. Finally, $v^m$ can be computed as $v^m = v^l - \kappa \,
  (p^m-p^l) \, F\left(\Pi_\kappa (p^m), \Pi_\kappa (p^l)\right)$, by
  Lemma~\ref{lem:para}.

  Finally, \eqref{eq:es1} follows from
  $G\left(p^{l},p^{l},p^{l},0\right)=G\left(p^{r},p^{r},p^{r},0\right)=0$
  and the Lipschitz continuity of the implicit function.
\end{proof}

Note that Lemma~\ref{lem:RS_liquid} in the case $\kappa = 1$ covers
the case of Riemann problems in the gas phase and slightly
improves~\cite[Chapter~5]{BressanLectureNotes}.

The next Lemma refers to the Riemann problem between the gas, on the
left, and the liquid, on the right. The symmetric situation is
entirely similar.

\begin{lemma}
  \label{lem:RS_interface}
  There exits a positive $\bar\delta$ such that for all $\kappa \in
  \left]0, 1\right]$ and for any couple of states $(p^l, v^l)$, $(p^r,
  v^r)$ with $\modulo{p^l-p_o} + \modulo{p^r-p_o} < \bar\delta$ and
  $\modulo{v^r-v^l} < \bar \delta$, there exists a unique state $(p^m,
  v^m)$ satisfying
  \begin{displaymath}
    V_1^g (p^m;p^l,v^l) = v^m
    \qquad \mbox{ and } \qquad
    V_2^\kappa (p^r; p^m, v^m) = v^r \,.
  \end{displaymath}
  Moreover,
  \begin{eqnarray}
    \label{eq:inu1}
    \modulo{p^m - p^l}
    & = &
    \O
    \left(
      \kappa \, \modulo{p^r-p^l}
      +
      \modulo{v^l - v^r}
    \right)
    \\
    \label{eq:inu2}
    \modulo{p^m - p^r}
    & = &
    \O
    \left(
      \modulo{p^r-p^l}
      +
      \modulo{v^l - v^r}
    \right)
    \\
    \label{eq:inu3}
    \frac{1}{\kappa} \,
    \modulo{v^m - v^r}
    & = &
    \O
    \left(
      \modulo{p^r-p^l}
      +
      \modulo{v^l - v^r}
    \right)
  \end{eqnarray}
\end{lemma}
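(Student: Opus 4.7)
The plan is to adapt the proof of Lemma~\ref{lem:RS_liquid} to the mixed interface setting, with the crucial twist being that one Lax curve is from the gas (slope of order $1$) and the other from the liquid (slope of order $\kappa$). I would parameterize both curves by pressure using Lemma~\ref{lem:para}, writing
\begin{displaymath}
  V_1^g(p^m;p^l,v^l) = v^l - (p^m-p^l)\,F_g\bigl(p^m,p^l\bigr), \quad
  V_2^\kappa(p^r;p^m,v^m) = v^m + \kappa(p^r-p^m)\,F\bigl(\Pi_\kappa(p^m),\Pi_\kappa(p^r)\bigr),
\end{displaymath}
and eliminate $v^m$ to obtain a single scalar equation $G(p^m;p^l,p^r,\xi) = 0$ in the unknown $p^m$, where
\begin{displaymath}
  G(p^m;p^l,p^r,\xi) = \xi + (p^m-p^l)\,F_g\bigl(p^m,p^l\bigr) - \kappa(p^r-p^m)\,F\bigl(\Pi_\kappa(p^m),\Pi_\kappa(p^r)\bigr)
\end{displaymath}
with $\xi = v^r-v^l$.

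The first step is to apply the Implicit Function Theorem at the reference point $p^m = p^l = p^r = p_o$, $\xi = 0$. There one computes
\begin{displaymath}
  \partial_{p^m} G\bigl(p_o,p_o,p_o,0\bigr) = F_g(p_o,p_o) + \kappa\,F\bigl(\Pi_\kappa(p_o),\Pi_\kappa(p_o)\bigr) \geq \sqrt{-\mathcal{T}_g'(p_o)} > 0,
\end{displaymath}
and this lower bound is uniform in $\kappa \in \left]0,1\right]$. Since $F$ itself is $\kappa$--independent by Lemma~\ref{lem:para} (only its arguments are rescaled through $\Pi_\kappa$), all second derivatives of $G$ near the reference point are likewise uniformly bounded, so the IFT yields $p^m$ as a Lipschitz function of $(p^l,p^r,\xi)$ on a neighborhood whose size does not shrink as $\kappa \to 0$. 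The hypothesis $\modulo{v^r-v^l} < \bar\delta$ (without the factor $\kappa$ appearing in Lemma~\ref{lem:RS_liquid}) is exactly what makes $\xi$ stay in this neighborhood. The resulting $v^m$ is then recovered from the $V_1^g$ equation.

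The quantitative estimates \eqref{eq:inu1}--\eqref{eq:inu3} follow by algebraic manipulation, not by a second application of the IFT. Rewriting the two curve equations as
\begin{displaymath}
  (p^m-p^l)\,F_g(p^m,p^l) = (v^l-v^r) + \kappa(p^r-p^m)\,F\bigl(\Pi_\kappa(p^m),\Pi_\kappa(p^r)\bigr),
\end{displaymath}
and using $F_g \geq c > 0$ together with $F = \O$, I bound
\begin{displaymath}
  \modulo{p^m-p^l} \leq \O\bigl(\modulo{v^l-v^r} + \kappa\,\modulo{p^r-p^m}\bigr) \leq \O\bigl(\modulo{v^l-v^r} + \kappa\,\modulo{p^l-p^r} + \kappa\,\modulo{p^m-p^l}\bigr).
\end{displaymath}
Absorbing the $\kappa\,\modulo{p^m-p^l}$ term on the left for $\kappa$ small enough yields \eqref{eq:inu1}. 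Then \eqref{eq:inu2} is immediate by the triangle inequality $\modulo{p^m-p^r}\leq \modulo{p^m-p^l} + \modulo{p^l-p^r}$, and \eqref{eq:inu3} follows directly from $v^m-v^r = -\kappa(p^r-p^m)F(\cdots)$ combined with \eqref{eq:inu2}.

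The main obstacle, compared with the purely liquid case, is keeping the domain of the implicit function uniform in $\kappa$: one must verify that the $\kappa$--dependent piece of $\partial_{p^m}G$ only adds a nonnegative quantity to the already positive gas contribution $F_g(p_o,p_o)$, and that the $\kappa$--rescaling inside $\Pi_\kappa$ does not blow up any higher derivative of $F$. This is where the $\C{1,1}$ regularity and the $\kappa$--independence of $F$ granted by Lemma~\ref{lem:para} are essential.
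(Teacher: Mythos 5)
Your proposal matches the paper's proof almost line for line: same function $G(p^m;p^l,p^r,\xi)$, same evaluation $\partial_{p^m}G(p_o,p_o,p_o,0)=\sqrt{-\mathcal T_g'(p_o)}+\kappa F(\Pi_\kappa(p_o),\Pi_\kappa(p_o))\geq\sqrt{-\mathcal T_g'(p_o)}>0$, same observation that the second derivatives of $G$ and hence the IFT radius are uniform in $\kappa$, same recovery of $v^m$ and the same triangle-inequality step for \eqref{eq:inu2} and the identity $v^r-v^m=\kappa(p^r-p^m)F(\cdots)$ for \eqref{eq:inu3}. The one cosmetic difference is in \eqref{eq:inu1}: you keep $p^r-p^m$ on the right and absorb a $\kappa\modulo{p^m-p^l}$ term, which as written requires $\kappa$ small relative to the implicit constant, whereas the paper substitutes $p^r-p^m=(p^r-p^l)-(p^m-p^l)$ into $G=0$ and solves exactly, giving $p^m-p^l=\bigl(v^l-v^r+\kappa(p^r-p^l)F\bigr)/\bigl(F_g+\kappa F\bigr)$ with a denominator bounded below by $\inf F_g>0$, so the estimate holds uniformly for every $\kappa\in\left]0,1\right]$ with no absorption step.
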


\begin{proof}
  Let $\xi = v^r - v^l$. We apply the Implicit Function Theorem to $G
  (p^m,p^l,p^r,\xi) = 0$ where
  \begin{displaymath}
    G (p^m,p^l,p^r,\xi) =
    \xi
    +
    (p^m-p^l) \, F_g(p^m, p^l)
    -
    \kappa \, (p^r-p^m) \, F\left(\Pi_\kappa (p^m),\Pi_\kappa (p^r)\right)
  \end{displaymath}
  to find $p^m$ as a function of $(p^l,p^r,\xi)$, which is possible
  since the derivative $\partial_{p^m} G$ evaluated at $p^{m}=p^l =
  p^r = p_o$ and $\xi = 0$ is
  \begin{displaymath}
    \partial_{p^m} G (p_o,p_o,p_o,0)
    =
    \sqrt{-\mathcal{T}'_g (p_o)}
    +
    \kappa \,
    F\left(\Pi_{\kappa}\left(p_{o}\right),\Pi_{\kappa}\left(p_{o}\right)\right)\ge
    \sqrt{-\mathcal{T}'_g (p_o)}>0\,.
  \end{displaymath}
  Note also that, in a neighborhood of $\left(p_o,p_o,p_o,0\right)$,
  all second derivatives of $G$ are bounded uniformly in $\kappa$,
  hence the domain of the implicit function contains a neighborhood of
  $(p_o,p_o,p_o,0)$ independent of $\kappa$. Moreover, $v^m$ can be
  computed as $v^m = v^l - (p^m-p^l) \, F_g (p^m, p^l)$, by
  Lemma~\ref{lem:para}. Concerning the latter estimates, use $G
  (p^m,p^l, p^r, v^r-v^l) = 0$ to obtain
  \begin{eqnarray*}
    p^m - p^l
    & = &
    \frac{
      v^l - v^r
      +
      \kappa \, (p^r-p^l)F\left(\Pi_\kappa (p^m), \Pi_\kappa (p^r)\right)
    }{F_g (p^m,p^l) + \kappa \, F\left(\Pi_\kappa (p^m), \Pi_\kappa (p^r)\right)}
  \end{eqnarray*}
  which implies~\eqref{eq:inu1} and, together with the simple
  inequality $\modulo{p^m - p^r} \leq \modulo{p^m-p^l} + \modulo{p^l -
    p^r}$ also proves~\eqref{eq:inu2}. Finally, the equality $v^r -
  v^m = \kappa \, (p^r - p^m) \, F\left(\Pi_\kappa (p^m), \Pi_\kappa
    (p^r)\right)$, together with~\eqref{eq:inu2},
  proves~\eqref{eq:inu3}.
\end{proof}

\paragraph{Definition of the Algorithm.}

We modify the standard construction of the wave front tracking
algorithm, see for instance~\cite[Chapter~4]{BressanLectureNotes}.

First, we identify the state $u$ by means of the pair $(p,v)$. Indeed,
we choose to parametrize the Lax curves as
in~\eqref{eq:LaxGas}--\eqref{eq:LaxLiquid} and, hence, the waves'
sizes are measured through the pressure difference $\sigma$ between
the two states on the sides of the wave.

Second, we introduce two strips around the two interfaces $z = 0$ and
$z = m$, where all $1$-waves have speed $-1$ and all $2$-waves have
speed $1$. This, together with~\cite[Lemma~2.5]{relaxation}, allows to
avoid the introduction of non-physical waves, significantly
simplifying the whole procedure.

We consider a representative of the initial datum $\tilde
u\in\left(\BV\cap\L1\right)\left(\reali,\reali^{+}\times\reali\right)$
such that $\tilde u(0+)=\tilde u(0)$, $\tilde u(m-)=\tilde u(m)$.  Fix
$\epsilon > 0$. We approximate the initial datum $\tilde u$ by a
sequence $\tilde u^\epsilon$ of piecewise constant initial data with a
finite number of discontinuities such that:
\begin{equation}
  \label{eq:hyp_id}
  \begin{array}{@{}rcl@{\quad}rclr@{\;}c@{\;}l@{}}
    \tv (\tilde p^\epsilon) & \leq & \tv (\tilde p) \,,
    &
    \norma{\tilde u^\epsilon - \tilde u}_{\L1} & \leq & \epsilon \,,
    \\
    \tv (\tilde v^\epsilon;\mathcal{G}) & \leq & \tv (\tilde v;\mathcal{G}) \,,
    &
    \tilde u^\epsilon (z) & = & \tilde u (0)
    & \mbox{ for all }
    z & \in & [-2\epsilon^2, 2\epsilon^2] \,,
    \\
    \tv (\tilde v^\epsilon;\mathcal{L}) & \leq & \tv (\tilde v;\mathcal{L})  \,,
    &
    u^o_\epsilon (z) & = & u^o (m)
    & \mbox{ for all }
    z & \in & [m-2\epsilon^2, m+2\epsilon^2]\,.
  \end{array}
\end{equation}
Observe that a possible jump at the interfaces $z=0$ and $z=m$ is
assigned to the gas region.  At each point of jump in the approximate
initial datum, we solve the corresponding Riemann problem. As usual,
see~\cite[Chapter~4]{BressanLectureNotes}, we approximate each
rarefaction wave by a rarefaction fan consisting of
$\epsilon$-wavelets, each with strength less than $\varepsilon$ and
traveling with the characteristic speed of the state to its left. On
the other hand, each shock wave is assigned its exact Rankine-Hugoniot
speed.  Similarly to what happens in the usual case, there exists a
constant $\delta_{o} > 0$ such that each of the above Riemann problems
has an approximate solution as long as $\tv (\tilde u) < \delta_{o}$.
We introduce two strips around the two interfaces $z = 0$ and $z = m$,
where all $1$-waves have speed $-1$ and all $2$-waves have speed $+1$:
\begin{displaymath}
  \mathcal{I}_\epsilon^- = [-\epsilon^2, \epsilon^2] \times \reali^+
  \quad \mbox{ and } \quad
  \mathcal{I}_\epsilon^+ = [m-\epsilon^2, m+\epsilon^2] \times \reali^+ \,.
\end{displaymath}
This, together with~\cite[Lemma~2.5]{relaxation}, allows us to avoid
the introduction of non-physical waves, significantly simplifying the
whole procedure.  Hence, assign to all $1$-waves entering
$\mathcal{I}_\epsilon^- \cup \mathcal{I}_\epsilon^+$ speed $-1$, while
all $2$-waves entering $\mathcal{I}_\epsilon^- \cup
\mathcal{I}_\epsilon^+$ are given speed $+1$, see
Figure~\ref{fig:regions}.

Remark that the actual values attained by the approximate solution are
not changed, only the wave speeds are modified. When exiting these
strips, every wave is given back its correct speed.  By this trick, no
interaction among waves of the same family may take place in either of
the two strips.  This construction can be extended up to the first
time $t_1$ at which two waves interact, or a wave hits one of the
interfaces.  At time $t_1$, the so constructed approximate solution is
piecewise constant with a finite number of discontinuities.  Any such
interaction gives rise to a new Riemann problem solved as at time
$t=0$, if the interaction is in the interior of the two phases, or as
described in Lemma~\ref{lem:RS_interface}, whenever the interaction is
along an interface.

Any rarefaction wave, once arisen, is not further split even if its
strength exceeds the threshold $\varepsilon$ after subsequent
interactions, with other waves or with the phase boundaries. The new
rarefaction waves that may arise at the interfaces are split, if their
strength exceeds $\varepsilon$, when they exit the strips
$\mathcal{I}_\epsilon^{\pm}$, since inside the strips they all travel
with the same speed.  We can thus iterate the previous construction at
any subsequent interaction, provided suitable upper bounds on the
total variation of the approximate solutions are available.  As it is
usual in this context, see~\cite[Chapter~7]{BressanLectureNotes}, we
may assume that no more than 2 waves interact at any interaction
point, or that no interaction happens at the boundaries of the two
strips, thanks to a small modification of the speed of waves outside
the strips, where necessary.

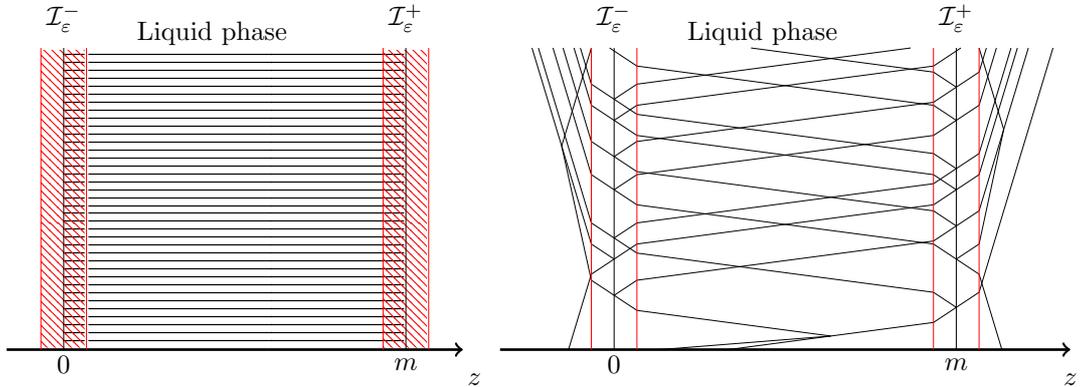
\begin{figure}[!h]
  \centering
  \begin{tikzpicture}[line width=0.5mm,yscale=2, xscale=3]
    \draw[color=white,pattern=horizontal lines] (0,-1) -- (1.5,-1) --
    (1.5,1) -- (0,1); \draw[color=white,pattern=north west lines,
    pattern color=red] (-0.1,-1) -- (0.1,-1) -- (0.1,1) -- (-0.1,1);
    \draw[color=white,pattern=north west lines, pattern color=red]
    (1.4,-1) -- (1.6,-1) -- (1.6,1) -- (1.4,1); \node at (0,1.2)
    {$\mathcal{I}_{\epsilon}^-$}; \node at (1.5,1.2)
    {$\mathcal{I}_{\epsilon}^+$}; \node at (0,-1.1) {$0$}; \node at
    (1.5,-1.1) {$m$}; \node at (0.65,1.1) {Liquid phase}; \draw[line
    width=0.2] (0,-1) -- (0,1); \draw[line width=1,->] (-0.25,-1) --
    (1.75,-1); \draw[line width=0.2] (1.5,-1) -- (1.5,1); \draw[line
    width=0.1,color=red] (-0.1,-1) -- (-0.1,1); \draw[line
    width=0.1,color=red] (0.1,-1) -- (0.1,1); \draw[line
    width=0.1,color=red] (1.4,-1) -- (1.4,1); \draw[line
    width=0.2,color=red] (1.6,-1) -- (1.6,1); \node at (1.8,-1.2)
    {$z$};
  \end{tikzpicture}
  \begin{tikzpicture}[line width=0.5mm,yscale=2, xscale=3]
    \node at (0,1.2) {$\mathcal{I}_{\epsilon}^-$}; \node at (1.5,1.2)
    {$\mathcal{I}_{\epsilon}^+$}; \node at (0,-1.1) {$0$}; \node at
    (1.5,-1.1) {$m$}; \node at (0.65,1.1) {Liquid phase}; \draw[line
    width=0.2] (-0.200,-1.000) -- (-0.100,-0.500) -- (0.000,-0.400) --
    (0.100,-0.300) -- (1.400,-0.040) -- (1.500,0.060) -- (1.600,0.160)
    -- (1.768,1.000); \draw[line width=0.2] (-0.000,-0.400) --
    (-0.100,-0.300) -- (-0.360,1.000); \draw[line width=0.2]
    (1.500,0.060) -- (1.400,0.160) -- (0.100,0.420) -- (0.000,0.520)
    -- (-0.100,0.620) -- (-0.176,1.000); \draw[line width=0.2]
    (0.500,-1.000) -- (1.400,-0.820) -- (1.500,-0.720) --
    (1.600,-0.620) -- (1.924,1.000); \draw[line width=0.2] (0.2,-1) --
    (0.95,-0.91); \draw[line width=0.2] (0.950,-0.910) --
    (0.100,-0.740) -- (0.000,-0.640) -- (-0.100,-0.540) --
    (-0.23,0.35); \draw[line width=0.2] (-0.230,0.350) --
    (-0.100,1.000); \draw[line width=0.2] (1.700,-1.000) --
    (1.600,-0.500) -- (1.500,-0.400) -- (1.400,-0.300) --
    (0.100,-0.040) -- (0.000,0.060) -- (-0.100,0.160) --
    (-0.268,1.000); \draw[line width=0.2] (0.000,0.520) --
    (0.100,0.620) -- (1.400,0.880) -- (1.500,0.980) -- (1.520,1.000);
    \draw[line width=0.2] (1.500,-0.720) -- (1.400,-0.620) --
    (0.100,-0.360) -- (0.000,-0.260) -- (-0.100,-0.160) --
    (-0.332,1.000); \draw[line width=0.2] (0.000,-0.640) --
    (0.100,-0.540) -- (1.400,-0.280) -- (1.500,-0.180) --
    (1.600,-0.080) -- (1.816,1.000); \draw[line width=0.2]
    (1.708,0.460) -- (1.600,1.000) -- (1.600,1.000); \draw[line
    width=0.2] (1.500,-0.400) -- (1.600,-0.300) -- (1.708,0.46);
    \draw[line width=0.2] (0.000,0.060) -- (0.100,0.160) --
    (1.400,0.420) -- (1.500,0.520) -- (1.600,0.620) -- (1.676,1.000);
    \draw[line width=0.2] (0.000,-0.260) -- (0.100,-0.160) --
    (1.400,0.100) -- (1.500,0.200) -- (1.600,0.300) -- (1.740,1.000);
    \draw[line width=0.2] (1.500,-0.180) -- (1.400,-0.080) --
    (0.100,0.180) -- (0.000,0.280) -- (-0.100,0.380) --
    (-0.224,1.000); \draw[line width=0.2] (1.500,0.200) --
    (1.400,0.300) -- (0.100,0.560) -- (0.000,0.660) -- (-0.100,0.760)
    -- (-0.148,1.000); \draw[line width=0.2] (1.500,0.520) --
    (1.400,0.620) -- (0.100,0.880) -- (0.000,0.980) -- (-0.020,1.000);
    \draw[line width=0.2] (0.000,0.280) -- (0.100,0.380) --
    (1.400,0.640) -- (1.500,0.740) -- (1.600,0.840) -- (1.632,1.000);
    \draw[line width=0.2] (0.000,0.660) -- (0.100,0.760) --
    (1.300,1.000); \draw[line width=0.2] (1.500,0.740) --
    (1.400,0.840) -- (0.600,1.000); \draw[line width=0.2] (0,-1) --
    (0,1); \draw[line width=1,->] (-0.50,-1) -- (2,-1); \draw[line
    width=0.2] (1.5,-1) -- (1.5,1); \draw[line width=0.1,color=red]
    (-0.1,-1) -- (-0.1,1); \draw[line width=0.1,color=red] (0.1,-1) --
    (0.1,1); \draw[line width=0.1,color=red] (1.4,-1) -- (1.4,1);
    \draw[line width=0.2,color=red] (1.6,-1) -- (1.6,1); \node at
    (2.0,-1.2) {$z$};
  \end{tikzpicture}
  \caption{Left, the strips $\mathcal{I}_\epsilon^{\pm}$ and the
    liquid phase. Right, modification to the usual wave front tracking
    algorithm: waves in the strips $\mathcal{I}_\epsilon^-$ and
    $\mathcal{I}_\epsilon^+$ are assign speed $1$, if belonging to the
    first family, and $-1$, if of the second family.}
  \label{fig:regions}
\end{figure}

\paragraph{Interaction Estimates.} We recall the classical Glimm
interaction estimates, see~\cite[Chapter~7,
formul\ae~(7.31)--(7.32)]{BressanLectureNotes}, which hold for any
smooth parametrization of the Lax curves:
\begin{figure}[!h]
  \centering
  \begin{tikzpicture}[line width=0.5mm,yscale=1.5, xscale=1.5]
    \node at (-1.1,-1.1) {$\sigma_{2}^{-}$}; \node at (1.0,-1.1)
    {$\sigma_{1}^{-}$}; \node at (1.1,1.1) {$\sigma_{2}^{+}$}; \node
    at (-1.1,1.1) {$\sigma_{1}^{+}$}; \node at (-1,0)
    {$\left(p_{o},v_{o}\right)$}; \draw[-] (1,1) -- (0,0); \draw[-]
    (-1,1) -- (0,0); \draw[-] (-1.2,-1) -- (0,0); \draw[-] (0.8,-1) --
    (0,0);
  \end{tikzpicture}
  \begin{tikzpicture}[line width=0.5mm,yscale=1.5, xscale=1.5]
    \node at (-1.1,-1.1) {$\sigma'$}; \node at (-0.5,-1.1)
    {$\sigma''$}; \node at (1.1,1.1) {$\sigma_{2}^{+}$}; \node at
    (-1.1,1.1) {$\sigma_{1}^{+}$}; \node at (-1,0)
    {$\left(p_{o},v_{o}\right)$}; \draw[-] (-1,-1) -- (1,1); \draw[-]
    (0,0) -- (-1,1); \draw[-] (-0.5,-1) -- (0,0);
  \end{tikzpicture}
  \begin{tikzpicture}[line width=0.5mm,yscale=1.5, xscale=1.5]
    \node at (0.5,-1.1) {$\sigma'$}; \node at (1.1,-1.1) {$\sigma''$};
    \node at (1.1,1.1) {$\sigma_{2}^{+}$}; \node at (-1.1,1.1)
    {$\sigma_{1}^{+}$}; \node at (-1,0) {$\left(p_{o},v_{o}\right)$};
    \draw[-] (0,0) -- (1,1); \draw[-] (1,-1) -- (-1,1); \draw[-]
    (0.5,-1) -- (0,0);
  \end{tikzpicture}
  \caption{Left, an interaction between waves of different
    families. Center, an interaction between waves of the second
    family. Right, an interaction between waves of the first
    family.
    \label{fig:interactions}}
\end{figure}
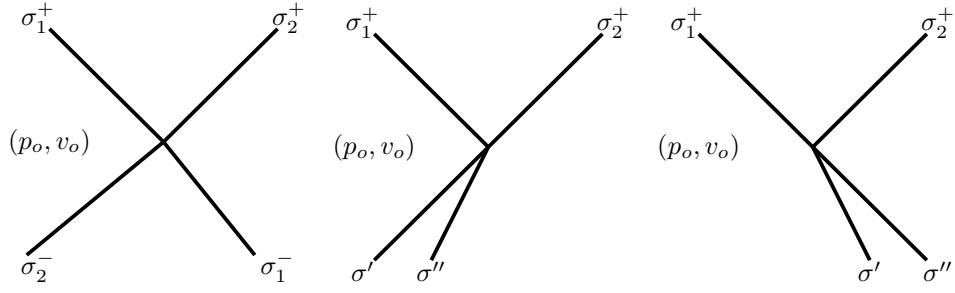
\begin{equation}
  \label{eq:ieStandard}
  \begin{array}{rcl@{\qquad\qquad}l}
    \modulo{\sigma_1^+ - \sigma_1^-}
    +
    \modulo{\sigma_2^+ - \sigma_2^-}
    & \leq &
    \O \modulo{\sigma_1^- \sigma_2^-}
    & \mbox{(Figure~\ref{fig:interactions}, left),}
    \\
    \modulo{\sigma_1^+}
    +
    \modulo{\sigma_2^+ - (\sigma'+\sigma'')}
    & \leq &
    \O \modulo{\sigma'\sigma''}
    & \mbox{(Figure~\ref{fig:interactions}, middle),}
    \\
    \modulo{\sigma_1^+ - (\sigma'+\sigma'')}
    +
    \modulo{\sigma_2^+}
    & \leq &
    \O \modulo{\sigma'\sigma''}
    & \mbox{(Figure~\ref{fig:interactions}, right),}
  \end{array}
\end{equation}
where we used the notation described in Figure~\ref{fig:interactions}.

Aiming at the convergence result, we need more careful interaction
estimates in the liquid phase. More precisely, we seek bounds on the
constant $\O$ above that allow to control its dependence on
$\kappa$. Remark that the choice of parametrizing Lax curves by means
of pressure differences plays a key role in this improvement.

\begin{lemma}
  \label{lem:InterLiquid}
  There exists a $\bar \delta >0$ such that if the interacting waves
  in Figure~\ref{fig:interactions} hit each other in $\mathcal{L}$ and
  all have sizes less than $\bar\delta$, then, the following estimates
  hold:
  \begin{equation}
    \label{eq:ieNonStandard}
    \begin{array}{rcl}
      \modulo{\sigma_1^+ - \sigma_1^-}
      +
      \modulo{\sigma_2^+ - \sigma_2^-}
      & \leq &
      \O \, \kappa^2 \, \modulo{\sigma_1^- \sigma_2^-}
      \\
      \modulo{\sigma_1^+}
      +
      \modulo{\sigma_2^+ - (\sigma'+\sigma'')}
      & \leq &
      \O \, \kappa^2 \, \modulo{\sigma'\sigma''}
      \\
      \modulo{\sigma_1^+ - (\sigma'+\sigma'')}
      +
      \modulo{\sigma_2^+}
      & \leq &
      \O \, \kappa^2 \, \modulo{\sigma'\sigma''}
    \end{array}
  \end{equation}
\end{lemma}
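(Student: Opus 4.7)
The strategy is to exploit the specific parametrization of the Lax curves provided by Lemma~\ref{lem:para}: in the liquid phase
\begin{displaymath}
  V_i^\kappa(p;p_o,v_o) = v_o \pm \kappa(p-p_o)\,F\!\left(\Pi_\kappa(p),\Pi_\kappa(p_o)\right),
\end{displaymath}
where $F$ is independent of $\kappa$ and $\Pi_\kappa'(p)=\kappa^2$. Hence the pressure dependence of the coefficient $F(\Pi_\kappa(\cdot),\Pi_\kappa(\cdot))$, which is precisely what drives the Glimm curvature term, is automatically suppressed by $\kappa^2$. This is the structural reason for the improved bound~\eqref{eq:ieNonStandard}, and every case in Figure~\ref{fig:interactions} will be reduced to it.

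Consider first the middle case (two $2$-waves of sizes $\sigma',\sigma''$ in succession). Starting from $(p_o,v_o)$, concatenating the two Lax curves reaches a state with pressure $p_o+\sigma'+\sigma''$ and velocity
\begin{displaymath}
  v_o+\kappa\,\sigma'\,G(p_o,p_o+\sigma')+\kappa\,\sigma''\,G(p_o+\sigma',p_o+\sigma'+\sigma''),
\end{displaymath}
where $G(a,b):=F(\Pi_\kappa(a),\Pi_\kappa(b))$. I would compare this with the velocity reached by a single $2$-wave of size $\sigma'+\sigma''$, namely $v_o+\kappa(\sigma'+\sigma'')\,G(p_o,p_o+\sigma'+\sigma'')$. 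A Taylor expansion of $G$ around $(p_o,p_o)$ eliminates the constant term (it drops out of the three summands in the standard algebraic way), so the leading contribution involves $\partial_a G$ and $\partial_b G$, each of which carries a factor $\Pi_\kappa'=\kappa^2$. Routine bookkeeping then gives a velocity mismatch of order $\kappa\cdot\kappa^2\cdot|\sigma'\sigma''|=\O\,\kappa^3\,|\sigma'\sigma''|$, while the pressure difference is exactly zero by construction.

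To extract the outgoing sizes $\sigma_1^+,\sigma_2^+$ I would then solve the Riemann problem produced by this mismatch via Lemma~\ref{lem:RS_liquid}. Since the pressure endpoints agree, $\sigma_1^++\sigma_2^+=\sigma'+\sigma''$; that is, $|\sigma_2^+-(\sigma'+\sigma'')|=|\sigma_1^+|$. Feeding the $\O\,\kappa^3\,|\sigma'\sigma''|$ velocity gap into the bound~\eqref{eq:es1} (where a velocity difference is divided by $\kappa$) produces $|\sigma_1^+|+|\sigma_2^+-(\sigma'+\sigma'')|\leq\O\,\kappa^2\,|\sigma'\sigma''|$, which is the middle line of~\eqref{eq:ieNonStandard}. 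The right–hand case is symmetric. For the left case (different families) I would compose the $2$-wave of size $\sigma_2^-$ on top of the $1$-wave of size $\sigma_1^-$ and compare with the opposite composition; the same expansion, again paid for by $\Pi_\kappa'=\kappa^2$, shows that the velocity discrepancy at the common outer pressure is $\O\,\kappa^3\,|\sigma_1^-\sigma_2^-|$, and Lemma~\ref{lem:RS_liquid} yields the required $\O\,\kappa^2$ bound on both outgoing differences.

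The main obstacle is bookkeeping: one has to verify that the cancellations leading to the $\kappa^2$ enhancement survive uniformly across shock/rarefaction combinations, since $F$ is only $\C{1,1}$ across the diagonal $\{x=y\}$, being $\C2$ on each side. This is handled by performing Taylor expansions separately in each of the four sectors determined by the signs of $\sigma',\sigma''$ (or $\sigma_1^-,\sigma_2^-$), where by item~\ref{it:Ctanto} of Lemma~\ref{lem:para} full $\C2$ regularity is available; the matching $\C{1,1}$ regularity across the diagonal suffices to glue the estimates at the boundary, since the desired quadratic bound depends only on one-sided second derivatives, uniformly controlled in $\kappa$ because $F$ itself does not depend on $\kappa$. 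Choosing $\bar\delta$ small enough so that all intermediate states remain in a fixed compact set where these uniform bounds apply completes the proof.
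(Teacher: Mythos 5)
Your strategy is genuinely different from the paper's, and it is essentially sound, but one step as written is imprecise in a way that matters if taken literally.

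The paper proceeds by writing a single implicit-function system $G(\sigma_1^+,\sigma_2^+,\sigma_1^-,\sigma_2^-)=0$ that links incoming to outgoing wave sizes, showing $\norma{D^2 G}_{\L\infty}=\O\kappa^2$ (hence $\norma{D^2\Sigma^\kappa}_{\L\infty}\leq\O\kappa^2$ for the implicit map $\Sigma^\kappa$), and then extracting the quadratic bound by a double Taylor integral together with the exact identities $\Sigma^\kappa(\sigma,0)=(\sigma,0)$, $\Sigma^\kappa(0,\sigma)=(0,\sigma)$. You instead compute the velocity \emph{mismatch} between the incoming configuration and the natural reference outgoing configuration (the single merged $2$-wave in the same-family case, resp.\ the swapped composition in the opposite-family case) and find it is $\O\kappa^3\modulo{\sigma'\sigma''}$, then conclude by stability of the Riemann solver. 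Both routes work; yours is arguably lighter, and in fact the mismatch estimate needs only that $F$ is Lipschitz and $\kappa$-independent together with $\Pi_\kappa'=\kappa^2$ --- so the worry you raise about the $\C{1,1}$ regularity of $F$ across the diagonal and the four-sector Taylor expansion is unnecessary for your argument. (It is needed in the paper's route, which differentiates $\Sigma^\kappa$ twice.)

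The imprecision is in the concluding step: you ``feed the $\O\kappa^3\modulo{\sigma'\sigma''}$ velocity gap into the bound~\eqref{eq:es1}.'' Read literally, this does not close the argument. Estimate~\eqref{eq:es1} bounds $\modulo{p^l-p^m}+\modulo{p^m-p^r}$ for a given Riemann problem; it does not bound the \emph{difference} between the wave sizes of two nearby Riemann problems, which is what you need. If instead you apply~\eqref{eq:es1} to the auxiliary Riemann problem from $(p^r,\hat v^r)$ to $(p^r,v^r)$, you get small waves $\alpha_1,\alpha_2$ with $\modulo{\alpha_1}+\modulo{\alpha_2}\leq\O\kappa^2\modulo{\sigma'\sigma''}$, but concatenating the reference $2$-wave of size $\sigma'+\sigma''$ with $\alpha_1,\alpha_2$ is a three-wave configuration, not the Riemann solution from $(p_o,v_o)$ to $(p^r,v^r)$; converting it to the two-wave solution would require the very interaction estimate you are trying to prove, hence circularity. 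What you actually need, and what is true, is the Lipschitz dependence of the middle pressure $p^m$ on the data $\xi=(v^r-v^l)/\kappa$, uniformly in $\kappa$ --- this is exactly the ``Lipschitz continuity of the implicit function'' invoked at the end of the proof of Lemma~\ref{lem:RS_liquid}, though it is not the statement~\eqref{eq:es1}. With that correction, $\modulo{\sigma_1^+ - \sigma_1^-}\leq\O\modulo{v^r-\tilde v^r}/\kappa=\O\kappa^2\modulo{\sigma_1^-\sigma_2^-}$, and pressure conservation $\sigma_1^++\sigma_2^+=\sigma_1^-+\sigma_2^-$ finishes it; similarly for the same-family cases.
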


\begin{proof}
  Consider first the case of interacting waves of different families,
  see Figure~\ref{fig:interactions}, left. Then, with straightforward
  computations, Lemma~\ref{lem:para} leads to
  \begin{equation}
    \label{eq:G}
    G (\sigma_1^+,\sigma_2^+,\sigma_1^-,\sigma_2^-) = 0
  \end{equation}
  where
  \begin{eqnarray*}
    G_1 (\sigma_1^+,\sigma_2^+,\sigma_1^-,\sigma_2^-)
    & = &
    \!\!\!
    \sigma_1^+ + \sigma_2^+
    - \sigma_1^- - \sigma_2^-
    \\
    G_2 (\sigma_1^+,\sigma_2^+,\sigma_1^-,\sigma_2^-)
    & = &
    \!\!\!
    \sigma_1^+ \,
    F \left(\Pi_\kappa(p_o+\sigma_1^+),\Pi_\kappa(p_o)\right)
    -
    \sigma_2^+ \,
    F \left(\Pi_\kappa(p_o+\sigma_1^+),\Pi_\kappa(p_o+\sigma_1^+
      +
      \sigma_2^+)\right)
    \\
    & &
    \!\!\!\!\!\!
    -
    \sigma_1^- \,
    F\left(\Pi_\kappa(p_o+\sigma_1^-+\sigma_2^-),
      \Pi_\kappa(p_o+\sigma_2^-)\right)
    +
    \sigma_2^- \,
    F\left(\Pi_\kappa(p_o), \Pi_\kappa(p_o + \sigma_2^-)\right)
  \end{eqnarray*}
  Note that by~\ref{it:C1}.~and~\ref{it:Ctanto}.~in
  Lemma~\ref{lem:para}, the function $G$ is of class $\C{2}$ and since
  $\Pi'_{\kappa}\left(p\right)=\kappa^{2}$ one can compute
  \begin{equation}
    \label{eq:D2G}
    \norma{D^{2}G (\sigma_1^+,\sigma_2^+,\sigma_1^-,\sigma_2^-)}_{\L\infty}
    =
    \O\kappa^{2} \,.
  \end{equation}
  Moreover, $G (0,0,0,0)= (0,0)$ and by direct computations, the
  Jacobian Matrix of $G$ with respect to $\sigma_1^+$ and $\sigma_2^+$
  computed at $(0,0,0,0)$ is
  \begin{displaymath}
    \partial_{(\sigma_1^+, \sigma_2^+)} G (0,0,0,0)
    =
    \left[
      \begin{array}{c@{\qquad}c}
        1 & 1
        \\[10pt]
        \sqrt{-\mathcal{T}' \left(\Pi_{\kappa}(p_{o})\right)}
        &
        - \sqrt{-\mathcal{T}' \left(\Pi_{\kappa}(p_{o})\right)}
      \end{array}
    \right] \,.
  \end{displaymath}
  \begin{displaymath}
    \left|\det \partial_{(\sigma_1^+, \sigma_2^+)} G (0,0,0,0)\right|
    =2\sqrt{-\mathcal{T}'\left(\bar p + \kappa^{2}\left(p_{o}-\bar
          p\right)\right)}\ge2\sqrt{-\mathcal{T}'\left(\max\left\{\bar
          p,p_{o}\right\}\right)}>0
  \end{displaymath}
  Hence, the Implicit Function Theorem ensures that~\eqref{eq:G}
  uniquely defines a map $\Sigma^\kappa$ of class $\C{2}$ such
  that~\eqref{eq:G} is equivalent to
  \begin{displaymath}
    (\sigma_1^+, \sigma_2^+)
    =
    \Sigma^\kappa (\sigma_1^-, \sigma_2^-)
  \end{displaymath}
  for all $(\sigma_1^-, \sigma_2^-)$ in a neighborhood of $(0,0)$
  which can be chosen independently of $\kappa$. Moreover,
  by~\eqref{eq:D2G},
  \begin{equation}
    \label{eq:D2Sigma}
    \norma{D^{2}\Sigma^\kappa (\sigma_1^-, \sigma_2^-)}_{\L\infty}
    \leq
    \O \kappa^{2} \,.
  \end{equation}
  By construction, the following equalities are immediate:
  \begin{displaymath}
    \Sigma^\kappa (\sigma_1, 0) = (\sigma_1,0) \,,\qquad
    \Sigma^\kappa (0, \sigma_2) = (0, \sigma_2) \,.
  \end{displaymath}
  Using~\eqref{eq:D2Sigma}, compute now
  \begin{eqnarray*}
    \modulo{\Sigma^\kappa_1 (\sigma_1^-,\sigma_2^-) - \sigma_1^-}
    & = &
    \modulo{\int_0^1
      \partial_{\sigma_2^-} \Sigma^\kappa_1 (\sigma_1^-, \theta\sigma_2^-)
      \d\theta}
    \modulo{\sigma_2^-}
    \\
    & = &
    \modulo{\int_0^1
      \left(
        \partial_{\sigma_2^-} \Sigma^\kappa_1 (\sigma_1^-, \theta\sigma_2^-)
        -
        \partial_{\sigma_2^-} \Sigma^\kappa_1 (0, \theta\sigma_2^-)
      \right)
      \d\theta} \,
    \modulo{\sigma_2^-}
    \\
    & = &
    \modulo{
      \int_0^1 \int_0^1
      \partial^2_{\sigma_1^-\sigma_2^-}
      \Sigma^\kappa_1 (\theta'\sigma_1^-, \theta\sigma_2^-)
      \d{\theta'} \d\theta}
    \modulo{\sigma_1^-\, \sigma_2^-}
    \\
    & = &
    \O \kappa^2 \modulo{\sigma_1^-\, \sigma_2^-} \,.
  \end{eqnarray*}

  \medskip

  We now consider the second estimate in~\eqref{eq:ieNonStandard},
  corresponding to the case of interacting waves both belonging to the
  second family. With the notation in Figure~\ref{fig:interactions},
  middle, we have
  \begin{equation}
    \label{eq:Gsame}
    G (\sigma_1^+, \sigma_2^+, \sigma', \sigma'') = 0
  \end{equation}
  where now
  \begin{eqnarray*}
    G_1(\sigma_1^+, \sigma_2^+, \sigma', \sigma'') & = &
    \sigma_1^+ + \sigma_2^+ - \sigma' -\sigma'' \,,
    \\
    G_2(\sigma_1^+, \sigma_2^+, \sigma', \sigma'') & = &
    \sigma_1^+ \,
    F \left(\Pi_\kappa(p_o+\sigma_1^+),\Pi_\kappa(p_o)\right)
    -
    \sigma_2^+ \,
    F \left(\Pi_\kappa(p_o+\sigma_1^+),\Pi_\kappa(p_o+\sigma_1^++\sigma_2^+)\right)
    \\
    & &
    \!\!\!
    +
    \sigma'
    F\left(\Pi_\kappa (p_o), \Pi_\kappa (p_o+\sigma')\right)
    +
    \sigma''
    F\left(\Pi_\kappa (p_o + \sigma'), \Pi_\kappa (p_o+\sigma'+\sigma'')\right) \,.
  \end{eqnarray*}
  Note that by~\ref{it:C1}, \ref{it:Ctanto} in Lemma~\ref{lem:para},
  the function $G$ is of class $\C{2}$ and since
  $\Pi'_{\kappa}\left(p\right)=\kappa^{2}$ one can compute again
  \begin{equation}
    \label{eq:D2Gsame}
    \norma{D^{2}G (\sigma_1^+,\sigma_2^+,\sigma',\sigma'')}_{\L\infty}
    =
    \O\kappa^{2} \,.
  \end{equation}
  Moreover, $G (0,0,0,0)= (0,0)$ and by direct computations, the
  Jacobian Matrix of $G$ with respect to $\sigma_1^+$ and $\sigma_2^+$
  computed at $(0,0,0,0)$ is, as before,
  \begin{displaymath}
    D_{(\sigma_1^+,\sigma_2^+)} G (0,0,0,0)
    =
    \left[
      \begin{array}{cc}
        1&1
        \\[6pt]
        \sqrt{-\mathcal{T}'\left(\Pi_\kappa (p_o)\right)}
        &
        - \sqrt{-\mathcal{T}'\left(\Pi_\kappa (p_o)\right)}
      \end{array}
    \right] \,.
  \end{displaymath}
  Hence, as before, the Implicit Function Theorem ensures
  that~\eqref{eq:Gsame} uniquely defines a map $\Sigma^\kappa$ of
  class $\C{2}$ such that~\eqref{eq:Gsame} is equivalent to
  \begin{displaymath}
    (\sigma_1^+, \sigma_2^+)
    =
    \Sigma^\kappa (\sigma', \sigma'')
  \end{displaymath}
  for all $(\sigma', \sigma'')$ in a neighborhood of $(0,0)$ which can
  be chosen independently of $\kappa$. Moreover.
  \begin{equation}
    \label{eq:D2SigmaSame}
    \norma{D^{2}\Sigma^\kappa (\sigma', \sigma'')}_{\L\infty}
    \leq
    \O \kappa^{2} \,.
  \end{equation}
  By construction, the following equalities are immediate:
  \begin{displaymath}
    \Sigma^\kappa (\sigma', 0) = (0, \sigma') \,,\qquad
    \Sigma^\kappa (0,\sigma'') = (0, \sigma'') \,.
  \end{displaymath}
  so that, using~\eqref{eq:D2SigmaSame}
  \begin{eqnarray*}
    & &
    \norma{\Sigma^\kappa (\sigma',\sigma'') - (0,\sigma'+\sigma'')}
    \\
    & = &
    \norma{
      \left(\Sigma^\kappa (\sigma',\sigma'') - (0,\sigma'+\sigma'')\right)
      -
      \left(\Sigma^\kappa (\sigma',0) - (0,\sigma')\right)
    }
    \\
    & = &
    \norma{\sigma''
      \int_0^1
      \left(
        \partial_{\sigma''} \Sigma^\kappa (\sigma', \theta\sigma'') - (0,1)
      \right)
      \d\theta
    }
    \\
    & = &
    \norma{\sigma''
      \int_0^1
      \left(
        \left(
          \partial_{\sigma''} \Sigma^\kappa (\sigma', \theta\sigma'') - (0,1)
        \right)
        -
        \left(\partial_{\sigma''}\Sigma^\kappa (0,\theta\sigma'') - (0,1)\right)
      \right)\d\theta
    }
    \\
    & = &
    \modulo{\sigma' \sigma''}
    \norma{
      \int_0^1 \int_0^1
      \partial^2_{\sigma'\sigma''}\Sigma^\kappa (\theta'\sigma', \theta\sigma'')
      \d{\theta} \d{\theta'}
    }
    \\
    & = &
    \O \, \kappa^2 \, \modulo{\sigma' \, \sigma''} \,,
  \end{eqnarray*}
  completing the proof of the second estimate
  in~\eqref{eq:ieNonStandard}. The case of two interacting waves both
  belonging to the first family in Figure~\ref{fig:interactions},
  right, is entirely similar.
\end{proof}

The estimates on the waves' sizes in the case of interactions
involving the interfaces are as follows.

\begin{lemma}
  \label{lem:InterInter}
  There exist positive $\bar\delta$, $c$ and $\kappa_*<1$ such that,
  if all the interacting waves in Figure~\ref{fig:ie} have strength
  less than $\bar\delta$, then the following estimates hold:
  \begin{equation}
    \label{eq:ieFig2uno}
    \begin{array}{rcr@{\;}c@{\;}r@{}}
      \modulo{\sigma_1^+}
      & \leq &
      \O \, \kappa \, \modulo{\sigma_1^-}
      & + &
      \left(1 + \O (\kappa+\bar\delta)\right) \modulo{\sigma_2^-}
      \\[6pt]
      \modulo{\sigma_2^+}
      & \leq &
      (1 - c\, \kappa) \, \modulo{\sigma_1^-}
      & + &
      \left( 2+ \O \bar\delta\right)
      \modulo{\sigma_2^-}
    \end{array}
  \end{equation}
  uniformly for all $\kappa \in \left]0, \kappa_*\right[$. Moreover:
  \begin{equation}
    \label{eq:come}
    \sigma_1^+ + \sigma_2^+ = \sigma_1^- + \sigma_2^-
    \qquad \mbox{ and } \qquad
    \left\{
      \begin{array}{rcl}
        \sigma_1^- = 0 & \Rightarrow &
        \left\{
          \begin{array}{l}
            \modulo{\sigma_2^+} - \modulo{\sigma_1^+} = \modulo{\sigma_2^-} \,,
            \\
            \sigma_2^- \, \sigma_2^+ \geq 0
            \mbox{ and }
            \sigma_2^- \, \sigma_1^+ \leq 0 \,.
          \end{array}
        \right.
        \\[18pt]
        \sigma_2^- = 0 & \Rightarrow&
        \left\{
          \begin{array}{l}
            \modulo{\sigma_1^+} + \modulo{\sigma_2^+} = \modulo{\sigma_1^-} \,,
            \\
            \sigma_1^- \, \sigma_1^+ \geq 0
            \mbox{ and }
            \sigma_1^- \, \sigma_2^+ \geq 0 \,.
          \end{array}
        \right.
      \end{array}
    \right.
  \end{equation}
\end{lemma}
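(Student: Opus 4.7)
My plan is to reduce the interaction to a Riemann problem of the type solved in Lemma~\ref{lem:RS_interface} and then to derive an explicit algebraic formula for the outgoing wave sizes. I treat the interface $z=0$ with gas on the left and liquid on the right; the case $z=m$ is symmetric. Denote by $(p^l,v^l)$, $(p^{*},v^{*})$, $(p^r,v^r)$ the constant states on the left of the incoming gas $2$--wave, at the interface before interaction, and on the right of the incoming liquid $1$--wave, respectively. The chosen pressure parametrization of Lax curves gives $\sigma_2^- = p^{*} - p^l$ and $\sigma_1^- = p^r - p^{*}$, while Lemma~\ref{lem:RS_interface} produces the post--interaction intermediate state $(p^m,v^m)$, with $\sigma_1^+ = p^m - p^l$ and $\sigma_2^+ = p^r - p^m$. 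The conservation identity in~\eqref{eq:come},
\begin{displaymath}
  \sigma_1^+ + \sigma_2^+ \;=\; p^r - p^l \;=\; \sigma_1^- + \sigma_2^-,
\end{displaymath}
is then immediate.

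For the quantitative bounds, I compute $v^r - v^l$ twice, first along the pre--interaction states via the $\sigma_2^-$ and $\sigma_1^-$ curves, and then along the post--interaction states via $\sigma_1^+$ and $\sigma_2^+$, using the representation~\eqref{eq:para} of Lemma~\ref{lem:para}. Equating the two expressions and using conservation to eliminate $\sigma_2^+ = \sigma_1^- + \sigma_2^- - \sigma_1^+$ yields a single linear equation in $\sigma_1^+$, whose explicit solution, together with the corresponding one for $\sigma_2^+$, is
\begin{displaymath}
  \sigma_1^+ = \sigma_1^- \, \frac{B+C}{A+B} + \sigma_2^- \, \frac{B-D}{A+B},
  \qquad
  \sigma_2^+ = \sigma_1^- \, \frac{A-C}{A+B} + \sigma_2^- \, \frac{A+D}{A+B},
\end{displaymath}
where $A = F_g (p^m,p^l)$, $D = F_g (p^l,p^{*})$, $B = \kappa \, F \left(\Pi_\kappa (p^m),\Pi_\kappa (p^r)\right)$ and $C = \kappa \, F \left(\Pi_\kappa (p^r),\Pi_\kappa (p^{*})\right)$. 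By part~\ref{it:Bound} of Lemma~\ref{lem:para}, $A$ and $D$ are bounded above and away from $0$ by positive constants independent of $\kappa$, while $B$ and $C$ are of order $\O \, \kappa$. The Lipschitz regularity of $F_g$ in a neighborhood of $p_o$ further yields $\modulo{A-D} = \O \, \bar\delta$ since $p^m, p^l, p^{*}$ all lie in a $\bar\delta$--neighborhood of $p_o$.

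From these bounds the estimates~\eqref{eq:ieFig2uno} follow by direct algebra: $(B+C)/(A+B) = \O \, \kappa$; rewriting $(B-D)/(A+B) = -1 + (A - D + 2B)/(A+B)$ gives $\modulo{B-D}/(A+B) \leq 1 + \O \, (\kappa + \bar\delta)$; and $(A+D)/(A+B) = 2 + (D - A - 2B)/(A+B) \leq 2 + \O \, \bar\delta$. The delicate point, and what I expect to be the main obstacle, is the factor $(1 - c\, \kappa)$ in the second line of~\eqref{eq:ieFig2uno}: it follows from $(A - C)/(A+B) = 1 - (B + C)/(A+B)$ together with a \emph{lower} bound $B + C \geq 2 c \, \kappa$ for some $c > 0$. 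This lower bound is precisely what part~\ref{it:Bound} of Lemma~\ref{lem:para} supplies, and its uniformity in $\kappa$ is crucial; it is guaranteed by the $\kappa$--independence of $F$ emphasized right after Lemma~\ref{lem:para}.

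The specialized identities in~\eqref{eq:come} are then read off the explicit formulas by setting in turn one of the incoming sizes to zero. If $\sigma_1^- = 0$, the negativity of $(B-D)/(A+B)$ (for $\kappa$ and $\bar\delta$ small) gives $\sigma_2^- \, \sigma_1^+ \leq 0$, the positivity of $(A+D)/(A+B)$ gives $\sigma_2^- \, \sigma_2^+ \geq 0$, and the elementary identity $(A+D)/(A+B) - (D-B)/(A+B) = 1$ yields $\modulo{\sigma_2^+} - \modulo{\sigma_1^+} = \modulo{\sigma_2^-}$. The case $\sigma_2^- = 0$ is entirely analogous, using that $(B+C)/(A+B)$ and $(A-C)/(A+B)$ are both positive and sum to $1$, which produces the matching signs and the identity $\modulo{\sigma_1^+} + \modulo{\sigma_2^+} = \modulo{\sigma_1^-}$.
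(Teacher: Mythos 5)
Your proposal is correct and follows essentially the same route as the paper: both hinge on the representation of Lemma~\ref{lem:para}, the resulting velocity-balance identity, and the boundedness of $F$ and $F_g$ from part~\ref{it:Bound}. The one genuine difference is algebraic and worth flagging: the paper first establishes well-posedness and the crude a priori bound $\modulo{\sigma_2^+} = \O\left(\modulo{\sigma_1^-}+\modulo{\sigma_2^-}\right)$ via the Implicit Function Theorem, and only then isolates $\sigma_1^+$ from the velocity identity with the leftover $\sigma_2^+$ treated as an error term; you instead substitute $\sigma_2^+ = \sigma_1^- + \sigma_2^- - \sigma_1^+$ into the velocity balance at the outset, which produces the symmetric closed-form expressions
\begin{displaymath}
  \sigma_1^+ = \sigma_1^- \, \frac{B+C}{A+B} + \sigma_2^- \, \frac{B-D}{A+B},
  \qquad
  \sigma_2^+ = \sigma_1^- \, \frac{A-C}{A+B} + \sigma_2^- \, \frac{A+D}{A+B},
\end{displaymath}
and makes the preliminary IFT bound unnecessary. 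This is slightly cleaner, and it also turns the sign claims and the exact cancellations in~\eqref{eq:come} into one-line computations, where the paper has to argue them case by case from~\eqref{eq:daqui}--\eqref{eq:s2}. Two small caveats you should state explicitly if you write this up: (i) the ``explicit'' coefficients are not truly explicit, since $A$ and $B$ depend on the unknown $p^m$; your invocation of Lemma~\ref{lem:RS_interface} is what guarantees $p^m$ exists and stays $\O\bar\delta$-close to $p_o$, so that $A,B,C,D$ lie in the ranges where part~\ref{it:Bound} and the Lipschitz continuity of $F_g$ apply; (ii) the hypotheses of Lemma~\ref{lem:RS_interface} are on $\modulo{v^r - v^l}$, so you should note that this is $\O\bar\delta$ because the incoming wave sizes are at most $\bar\delta$ and the slopes of the relevant Lax curves are $\O(1)$ in the gas and $\O(\kappa)$ in the liquid.
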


\begin{figure}[!h]
  \centering
  \begin{tikzpicture}[line width=0.5mm,yscale=2, xscale=1.5]
    \draw[color=white,pattern=horizontal lines] (0,-1) -- (1.5,-1) --
    (1.5,1) -- (0,1); \node at (-1.6,-1.1) {$\sigma_{2}^{-}$}; \node
    at (1.6,1.1) {$\sigma_{2}^{+}$}; \node at (-1.6,0)
    {$\left(p_{o},v_{o}\right)$}; \node at (-1.6,1.1)
    {$\sigma_{1}^{+}$}; \node at (1.5,-1.1) {$\sigma_{1}^{-}$};
    \draw[-] (1.5,-1) -- (0,0); \node at (0.6,1.1) {Liquid phase};
    \draw[-] (-1.5,-1) -- (0,0); \draw[-] (0,0) -- (1.5,1); \draw[-]
    (0,0) -- (-1.5,1); \draw[line width=0.2] (0,-1) -- (0,1);
  \end{tikzpicture}
  \caption{Notation for the proof of~Lemma~\ref{lem:InterInter}:
    $\sigma_1^-$, coming from the liquid phase, and $\sigma_2^-$,
    coming from the gas phase, hit against the phase boundary
    generating $\sigma_2^+$ in the liquid phase and $\sigma_1^+$ in
    the gas phase.}
  \label{fig:ie}
\end{figure}
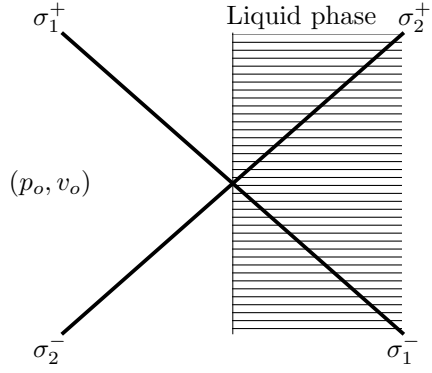

\begin{proof}
  In the present case, we have
  \begin{equation}
    \label{eq:GInter}
    G (\sigma_1^+, \sigma_2^+, \sigma_1^-, \sigma_2^-)
    =0
  \end{equation}
  where
  \begin{eqnarray*}
    G_1 (\sigma_1^+, \sigma_2^+, \sigma_1^-, \sigma_2^-)
    & = &
    \sigma_1^+ + \sigma_2^+ - \sigma_1^- - \sigma_2^-
    \\
    G_2 (\sigma_1^+, \sigma_2^+, \sigma_1^-, \sigma_2^-)
    & = &
    \sigma_2^- \, F_g (p_o, p_o+\sigma_2^-)
    -
    \kappa \, \sigma_1^- \,
    F\left(
      \Pi_\kappa (p_o+\sigma_1^-+\sigma_2^-),
      \Pi_\kappa (p_o+\sigma_2^-)
    \right)
    \\
    & &
    +
    \sigma_1^+ \, F_g (p_o+\sigma_1^+,p_o)
    -
    \kappa \, \sigma_2^+ \,
    F\left(
      \Pi_\kappa (p_o+\sigma_1^+),
      \Pi_\kappa (p_o+\sigma_1^++\sigma_2^+)
    \right)
  \end{eqnarray*}
  with $G (0,0,0,0) = 0$ and Jacobian matrix
  \begin{displaymath}
    \partial_{\left(\sigma_1^+,\sigma_2^+\right)} G (0,0,0,0)
    =
    \left[
      \begin{array}{cc}
        1 & 1
        \\[6pt]
        \sqrt{-\mathcal{T}_g' (p_o)} &
        - \kappa \sqrt{-\mathcal{T}'\left(\Pi_\kappa (p_o)\right)}
      \end{array}
    \right]
  \end{displaymath}
  \begin{displaymath}
    \left|\det \partial_{\left(\sigma_1^+,\sigma_2^+\right)} G
      (0,0,0,0)\right|
    =\sqrt{-\mathcal{T}_g' (p_o)} +\kappa
    \sqrt{-\mathcal{T}'\left(\Pi_\kappa (p_o)\right)}\ge
    \sqrt{-\mathcal{T}_g' (p_o)}>0
  \end{displaymath}
  and moreover
  \begin{displaymath}
    D^2G (\sigma_1^+,\sigma_2^+,\sigma_1^-,\sigma_2^-) = \O
  \end{displaymath}
  uniformly in $\kappa$, which allows to apply the Implicit Function
  Theorem in the same neighborhood of radius $\bar \delta$ for all
  small $\kappa$, yielding a map $ \Sigma^\kappa (\sigma_1^-,
  \sigma_2^-)= \left(\sigma_1^+ , \sigma_2^+\right)$ such that
  \begin{equation}
    \label{eq:secdersi}
    D^2\Sigma^\kappa (\sigma_1^-,\sigma_2^-) = \O
  \end{equation}
  locally in $(\sigma_1^-, \sigma_2^-)$ and uniformly in
  $\kappa$. Moreover,
  \begin{displaymath}
    \begin{array}{@{}cl@{}}
      &
      D\Sigma^\kappa (0,0)
      \\
      = &
      -
      \left[D_{(\sigma_1^+, \sigma_2^+)} G (0,0,0,0)\right]^{-1}
      \;
      D_{(\sigma_1^-,\sigma_2^-)} G (0,0,0,0)
      \\
      = &
      \frac{1}{\sqrt{-\mathcal{T}'_{g} (p_o)}
        +
        \kappa \sqrt{-\mathcal{T}'\left(\Pi_\kappa (p_o)\right)}}
      \left[
        \begin{array}{@{}cc@{}}
          \kappa \sqrt{-\mathcal{T}'\left(\Pi_\kappa (p_o)\right)}
          &
          1
          \\
          \sqrt{-\mathcal{T}'_{g} (p_o)}
          & -1
        \end{array}
      \right]
      \left[
        \begin{array}{@{}cc@{}}
          1
          &
          1
          \\
          \kappa \sqrt{-\mathcal{T}'\left(\Pi_\kappa (p_o)\right)}
          &
          -\sqrt{-\mathcal{T}' _{g}(p_o)}
        \end{array}
      \right]
      \\
      = &
      \frac{1}{\sqrt{-\mathcal{T}'_{g} (p_o)}
        +
        \kappa \sqrt{-\mathcal{T}'\left(\Pi_\kappa (p_o)\right)}}
      \left[
        \begin{array}{@{}c@{\,}c@{}}
          2 \, \kappa \sqrt{-\mathcal{T}'\left(\Pi_\kappa (p_o)\right)}
          &
          -\sqrt{-\mathcal{T}' _{g}(p_o)}
          +
          \kappa \sqrt{-\mathcal{T}'\left(\Pi_\kappa (p_o)\right)}
          \\
          \sqrt{-\mathcal{T}'_{g} (p_o)}
          -
          \kappa \sqrt{-\mathcal{T}'\left(\Pi_\kappa (p_o)\right)}
          &
          2 \, \sqrt{-\mathcal{T}'_{g} (p_o)}
        \end{array}
      \right]
    \end{array}
  \end{displaymath}
  which shows that the following bound $D\Sigma^\kappa (0,0) = \O$
  hold uniformly in $\kappa$. This, together with~\eqref{eq:secdersi},
  implies
  \begin{displaymath}
    D \Sigma^\kappa (\sigma_1^-, \sigma_2^-) = \O \,,
  \end{displaymath}
  so that
  \begin{equation}
    \label{eq:l1}
    \Sigma^\kappa (\sigma_1^-, \sigma_2^-)
    = \O \left(\modulo{\sigma_1^-} + \modulo{\sigma_2^-}\right)
  \end{equation}
  since $\Sigma^\kappa (0,0)=0$.  Solve now $G_2 (\sigma_1^+,
  \sigma_2^+, \sigma_1^-, \sigma_2^-)=0$ for $\sigma_1^+$, use the
  bound~\eqref{eq:l1} and the estimates $\modulo{\sigma_1^-},
  \modulo{\sigma_2^-} < \bar \delta$ to obtain:
  \begin{eqnarray}
    \label{eq:daqui}
    \sigma_1^+
    & = &
    -
    \frac{F_g (p_o,p_o+\sigma_2^-)}{F_g (p_o+\sigma_1^+, p_o)} \, \sigma_2^-
    +
    \frac{F\left(\Pi_\kappa (p_o+\sigma_1^-+\sigma_2^-), \Pi_\kappa (p_o+\sigma_2^-)\right)}{F_g (p_o+\sigma_1^+, p_o)} \,
    \kappa \, \sigma_1^-
    \\
    \label{eq:aqui}
    & &
    +
    \frac{F\left(\Pi_\kappa (p_o+\sigma_1^+), \Pi_\kappa (p_o+\sigma_1^++\sigma_2^+)\right)}{F_g (p_o+\sigma_1^+, p_o)} \,
    \kappa \, \sigma_2^+
    \\
    \nonumber
    & \leq &
    \left(1+\O \bar\delta\right) \modulo{\sigma_2^-}
    +
    \O \, \kappa \, \modulo{\sigma_1^-}
    +
    \O \, \kappa \left(\modulo{\sigma_1^-} + \modulo{\sigma_2^-}\right)
    \\
    \nonumber
    & = &
    \O \, \kappa \, \modulo{\sigma_1^-}
    +
    \left(1 + \O (\kappa+\bar \delta)\right) \modulo{\sigma_2^-}
  \end{eqnarray}
  which gives the first estimate in~\eqref{eq:ieFig2uno}. To obtain
  the second one, use $G_1 (\sigma_1^+, \sigma_2^+, \sigma_1^-,
  \sigma_2^-) = 0$ and~\eqref{eq:daqui}--\eqref{eq:aqui}:
  \begin{eqnarray}
    \label{eq:s1}
    \!\!\!\!\!\!\!\!\!
    & &
    \left(
      1
      +
      \kappa\,
      \frac{F\left(\Pi_\kappa (p_o+\sigma_1^+), \Pi_\kappa (p_o+\sigma_1^++\sigma_2^+)\right)}{F_g (p_o+\sigma_1^+, p_o)}
    \right)
    \sigma_2^+
    \\
    \label{eq:s2}
    \!\!\!\!\!\!\!\!\!
    & = &
    \left(
      1
      -
      \kappa \,
      \frac{F\left(\Pi_\kappa (p_o+\sigma_1^-+\sigma_2^-), \Pi_\kappa (p_o+\sigma_1^-)\right)}{F_g (p_o+\sigma_1^+, p_o)}
    \right)
    \sigma_1^-
    +
    \left(
      1
      +
      \frac{F_g (p_o,p_o+\sigma_2^-)}{F_g (p_o+\sigma_1^+, p_o)}
    \right)
    \sigma_2^-
  \end{eqnarray}
  which implies the second in~\eqref{eq:ieFig2uno}, since for a
  suitable $c>0$,
  \begin{eqnarray*}
    \frac{F\left(\Pi_\kappa (p_o+\sigma_1^+), \Pi_\kappa (p_o+\sigma_1^++\sigma_2^+)\right)}{F_g (p_o+\sigma_1^+, p_o)}
    & \geq &
    c \,,
    \\
    \frac{F\left(\Pi_\kappa (p_o+\sigma_1^-+\sigma_2^-), \Pi_\kappa (p_o+\sigma_1^-)\right)}{F_g (p_o+\sigma_1^+, p_o)}
    & \geq &
    c \,,
    \\
    \frac{F_g (p_o,p_o+\sigma_2^-)}{F_g (p_o+\sigma_1^+, p_o)}
    & \leq &1 + \O \, \bar\delta \,.
  \end{eqnarray*}

  To prove~\eqref{eq:come}, note that in the case $\sigma_1^- = 0$,
  \eqref{eq:s1}--\eqref{eq:s2} imply that $\sigma_2^+$ and
  $\sigma_2^-$ have the same sign. On the other hand,
  by~\eqref{eq:daqui}--\eqref{eq:aqui}
  \begin{displaymath}
    \sigma_1^+
    =
    \left(
      -
      \frac{F (p_o,p_o+\sigma_2^-)}{F (p_o+\sigma_1^+, p_o)}
      +
      \O \, \kappa
    \right)
    \sigma_2^-
  \end{displaymath}
  so that $\sigma_1^+$ and $\sigma_2^-$ have different signs whenever
  $\kappa$ is sufficiently small, proving the first equality on the
  right in~\eqref{eq:come}.

  Assume now that $\sigma_2^- = 0$, so that $\sigma_1^+ + \sigma_2^+ =
  \sigma_1^-$. By~\eqref{eq:s1}--\eqref{eq:s2}, $\sigma_1^-$ and
  $\sigma_2^+$ have the same sign for $\kappa$ small. The second
  inequality in~\eqref{eq:ieFig2uno} then ensures that
  $\modulo{\sigma_2^+} < \modulo{\sigma_1^-}$ and hence also
  $\sigma_1^+$ has the same sign of $\sigma_1^-$ and $\sigma_2^+$.
\end{proof}

Remark that a wave refracted at the phase boundary remains of the same
type, whereas the reflected wave changes type when it comes from the
liquid and remains of the same type when it comes from the gas, see
Table~\ref{tab:signs} and~\eqref{eq:come}.

\medskip

At any fixed positive time $t$, the approximate solution is a
piecewise constant function $u^\epsilon (t) = \sum_\alpha u_\alpha \,
\chi_{\strut [z_\alpha, z_{\alpha+1}[}$. If $t$ is not an interaction
time, we denote by $\sigma_\alpha$ the size of the wave supported at
$z_\alpha$ and introduce the potentials
\begin{equation}
  \label{eq:Upsilon}
  \begin{array}{c}
    \displaystyle
    V_{\mathcal{G}_{in}} = \sum_{\alpha \in \mathcal{G}_{in}} \modulo{\sigma_\alpha}
    \qquad\qquad
    V_{\mathcal{G}_{out}}  =  \sum_{\alpha \in \mathcal{G}_{out}} \modulo{\sigma_\alpha}
    \qquad\qquad
    V_{\mathcal{L}} = \sum_{\alpha \in \mathscr{L}} \modulo{\sigma_\alpha}
    \\[18pt]
    \displaystyle
    Q_{\mathcal{G}}  =
    \sum_{(\alpha, \beta) \in \mathcal{A}_{\mathcal{G}}}
    \modulo{\sigma_\alpha \, \sigma_\beta}
    \qquad\qquad
    Q_{\mathcal{L}}  =
    \sum_{(\alpha, \beta) \in \mathcal{A}_{\mathcal{L}}}
    \modulo{\sigma_\alpha \, \sigma_\beta}
    \\[18pt]
    \displaystyle
    \Upsilon
    =
    K_{in} \, V_{\mathcal{G}_{in}}
    +
    V_{\mathcal{G}_{out}}
    +
    K_{\mathcal{L}} \, V_{\mathcal{L}}
    +
    H_{\mathcal{G}} \, Q_{\mathcal{G}}
    +
    \kappa^2 \, H_{\mathcal{L}} \, Q_{\mathcal{L}} \,,
  \end{array}
\end{equation}
where $K_{in}, K_{\mathcal{L}}, H_{\mathcal{G}}$ and $H_{\mathcal{L}}$
are constants independent of $\kappa$ to be precisely defined below. Above, we denoted\\
\begin{tabular}{cp{0.9\textwidth}}
  $\mathcal{G}_{in}$
  &
  $2$-waves supported in $\left]-\infty, 0\right[$ and $1$-waves supported in $\left]m, +\infty\right[$.
  \\
  $\mathcal{G}_{out}$
  &
  $1$-waves supported in $\left]-\infty, 0\right[$ and $2$-waves supported in $\left]m, +\infty\right[$.
  \\
  $\mathscr{L}$
  &
  all waves supported in the liquid phase $\mathcal{L}$.
  \\
  $\mathcal{A}_{\mathcal{G}}$
  &
  pairs of approaching waves supported in the gas phase.
  \\
  $\mathcal{A}_{\mathcal{L}}$
  &
  pairs of approaching waves supported in the liquid phase.
\end{tabular}
\\
Here, we define as \emph{approaching} two waves both supported in the
same interval $\left]-\infty, 0\right[$, $]0,m[$ or $\left]m,
  +\infty\right[$, either of the same family and when one of the two
is a shock, or of different families with the one of the first family
on the right.

\begin{lemma}
  \label{lem:Upsilon}
  There exist weights $K_{in}, K_{\mathcal{L}}, H_{\mathcal{G}}$ and
  $H_{\mathcal{L}}$, all greater than $1$, $\kappa_* \in \left]0, 1
  \right[$ and a positive $\bar\delta$ such that, for all $\kappa \in
  \left]0, \kappa_*\right[$ and piecewise constant initial data
  $\tilde u^\epsilon$ with the corresponding approximate solution
  $u^\epsilon$ constructed by the algorithm above satisfying $\Upsilon
  (u^\epsilon(0+)) < \bar\delta$, the function $t \to
  \Upsilon\left(u^\epsilon (t)\right)$ is non increasing. Moreover,
  calling $\sigma_\alpha, \sigma_\beta$ the waves interacting at time
  $\bar t$ and point $\bar z$, with $\sigma_\alpha$ coming from the
  left, the following estimates hold:
  \begin{equation}
    \label{eq:1}
    \begin{array}{r@{\;}c@{\;}l@{\qquad\quad}rcl}
      \bar z & \in & \mathcal{G}
      &
      \Delta \Upsilon  & \leq &
      - \modulo{\sigma_\alpha \, \sigma_\beta}
      \\
      \bar z & = & 0
      &
      \Delta \Upsilon  & \leq &
      -\modulo{\sigma_\alpha} - \kappa \, \modulo{\sigma_\beta}
      \\
      \bar z & \in & \mathcal{L}
      &
      \Delta \Upsilon  & \leq &
      - \kappa^2 \, \modulo{\sigma_{\alpha} \, \sigma_\beta}
      \\
      \bar z & = & m
      &
      \Delta \Upsilon  & \leq &
      -\kappa \, \modulo{\sigma_\alpha} - \modulo{\sigma_\beta} \,.
    \end{array}
  \end{equation}
\end{lemma}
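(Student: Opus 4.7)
The plan is to verify the decrease case by case, according to where the interaction occurs (interior of $\mathcal{G}$, interior of $\mathcal{L}$, left interface $z=0$, right interface $z=m$), using Lemma~\ref{lem:InterLiquid} and Lemma~\ref{lem:InterInter}. The constants $K_{in}, K_{\mathcal{L}}, H_{\mathcal{G}}, H_{\mathcal{L}}$ are chosen in the cascading order $K_{\mathcal{L}} \to H_{\mathcal{L}} \to K_{in} \to H_{\mathcal{G}}$, each depending only on those already fixed, and finally $\bar\delta$ and $\kappa_*$ are taken small enough to absorb every $\O \bar\delta$ and $\O \kappa$ correction. As usual, a bootstrap argument ensures that the total variation stays below $\bar\delta$ for all $t \geq 0$, so the interaction estimates remain applicable.

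For an interior gas interaction at $\bar z \in \mathcal{G}$, the standard Glimm estimate~\eqref{eq:ieStandard} yields $\Delta V_{\mathcal{G}_{in}} + \Delta V_{\mathcal{G}_{out}} = \O \modulo{\sigma_\alpha \sigma_\beta}$ and $\Delta Q_{\mathcal{G}} \leq -(1 - \O \bar\delta) \modulo{\sigma_\alpha \sigma_\beta}$, while $V_{\mathcal{L}}, Q_{\mathcal{L}}$ are unchanged; choosing $H_{\mathcal{G}} (1-\O \bar\delta) \geq 1 + \O (K_{in} + K_{\mathcal{L}})$ gives the first line of~\eqref{eq:1}. For an interior liquid interaction at $\bar z \in \mathcal{L}$, the refined estimate~\eqref{eq:ieNonStandard} gives $\Delta V_{\mathcal{L}} = \O \kappa^{2} \modulo{\sigma_\alpha \sigma_\beta}$ and $\Delta Q_{\mathcal{L}} \leq -(1-\O \bar\delta) \modulo{\sigma_\alpha \sigma_\beta}$, so that the weighted term $\kappa^{2} H_{\mathcal{L}} \Delta Q_{\mathcal{L}}$ dominates $K_{\mathcal{L}} \Delta V_{\mathcal{L}}$ as soon as $H_{\mathcal{L}}$ is large compared with $K_{\mathcal{L}}$; this produces the third line of~\eqref{eq:1}.

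The interface cases are the delicate ones; I detail $\bar z = 0$, being $\bar z = m$ symmetric. In the notation of Figure~\ref{fig:ie}, $\sigma_\alpha = \sigma_2^- \in \mathcal{G}_{in}$ and $\sigma_\beta = \sigma_1^- \in \mathscr{L}$ disappear, while $\sigma_1^+ \in \mathcal{G}_{out}$ and $\sigma_2^+ \in \mathscr{L}$ are created. A standard bookkeeping of approaching pairs, combined with the total variation bound, yields $\modulo{\Delta Q_{\mathcal{G}}} \leq \bar\delta (\modulo{\sigma_1^+} + \modulo{\sigma_2^-})$ and $\modulo{\Delta Q_{\mathcal{L}}} \leq \bar\delta (\modulo{\sigma_2^+} + \modulo{\sigma_1^-})$. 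Substituting into $\Delta \Upsilon = -K_{in} \modulo{\sigma_2^-} + \modulo{\sigma_1^+} + K_{\mathcal{L}} (\modulo{\sigma_2^+} - \modulo{\sigma_1^-}) + H_{\mathcal{G}} \Delta Q_{\mathcal{G}} + \kappa^{2} H_{\mathcal{L}} \Delta Q_{\mathcal{L}}$ and applying~\eqref{eq:ieFig2uno} gives
\begin{displaymath}
  \Delta \Upsilon
  \leq
  \bigl(-K_{in} + 1 + 2 K_{\mathcal{L}} + \O (\kappa + \bar\delta)\bigr) \modulo{\sigma_2^-}
  +
  \kappa \bigl(\O - c K_{\mathcal{L}} + \O \bar\delta\bigr) \modulo{\sigma_1^-} \,.
\end{displaymath}
Choosing $K_{\mathcal{L}}$ so that $c K_{\mathcal{L}} \geq 1 + \O$, then $K_{in} \geq 2 + 2 K_{\mathcal{L}} + 1$, and finally $\bar\delta$ and $\kappa_*$ small enough, the two coefficients are bounded by $-1$ and $-\kappa$ respectively, proving the second line of~\eqref{eq:1}.

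The main obstacle is the factor $2$ in front of $\modulo{\sigma_2^-}$ in the second inequality of~\eqref{eq:ieFig2uno}: an incoming gas wave generates a transmitted liquid wave nearly twice as large, which carries the weight $K_{\mathcal{L}}$. Only the weight $K_{in}$ of the incoming gas family can compensate, forcing the hierarchy $K_{in} \gg K_{\mathcal{L}}$. This is made consistent with the gas interior estimate by choosing $H_{\mathcal{G}}$ only after $K_{in}$, and the $\kappa$-factors appearing in~\eqref{eq:ieNonStandard} and~\eqref{eq:ieFig2uno} are precisely what is needed to make all $\bar\delta$-corrections absorbable uniformly in $\kappa \in \left]0, \kappa_*\right[$.
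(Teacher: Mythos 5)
Your proposal is correct and follows essentially the same strategy as the paper's own proof: a case-by-case analysis of the interactions (interior of $\mathcal{G}$, interior of $\mathcal{L}$, $z=0$, $z=m$), feeding the estimates~\eqref{eq:ieStandard}, \eqref{eq:ieNonStandard}, \eqref{eq:ieFig2uno} into $\Delta\Upsilon$ and then choosing the weights in a cascade ($K_{\mathcal{L}}$ first, then $K_{in}$ and $H_{\mathcal{G}}, H_{\mathcal{L}}$, and finally $\bar\delta, \kappa_*$), followed by induction over interaction times. The only differences from the paper are cosmetic: the paper pre-absorbs $C\bar\delta$ into the numerical constants (hence the factors $2, 3$ in its $\Delta V_{\mathcal{G}_{out}}, \Delta V_{\mathcal{L}}$ bounds) and lists $H_{\mathcal{L}}$ last in the cascade, while you keep the $\O\bar\delta$ corrections explicit and choose $H_{\mathcal{L}}$ right after $K_{\mathcal{L}}$ — both orderings are valid since $H_{\mathcal{L}}$ depends only on $K_{\mathcal{L}}$. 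Your remark identifying the factor $2$ in the transmission estimate as the reason for the hierarchy $K_{in} \gg K_{\mathcal{L}}$ is exactly the mechanism at work.
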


\begin{proof}
  Denote by $C$, with $C > 1$, a positive constant bounding from above
  all $\O$ appearing in~\eqref{eq:ieStandard},
  \eqref{eq:ieNonStandard} and~\eqref{eq:ieFig2uno}. Choose
  $\bar\delta>0$ such that $\bar\delta < 1 / (2C)$, and $\tilde
  u^{\varepsilon}$ such that
  $\Upsilon\left(u^{\varepsilon}(0+)\right)<\bar \delta$.

  Suppose that at time $\bar t$ there is an interaction and that
  $\Upsilon\left(u^{\varepsilon}(\bar t-)\right)<\bar\delta$.
  Consider the different interactions separately. Begin with an
  interaction in $\mathcal{G}$, as in Figure~\ref{fig:interactions},
  using~\eqref{eq:ieStandard} and definitions~\eqref{eq:Upsilon}:
  \begin{displaymath}
    \begin{array}{rcl@{\qquad\qquad}rcl}
      \Delta V_{\mathcal{G}_{in}}
      & \leq &
      C \, \modulo{\sigma_\alpha \, \sigma_\beta}
      &
      \Delta Q_{\mathcal{G}}
      & \leq &
      C \, \modulo{\sigma_\alpha \, \sigma_\beta} \bar\delta
      -
      \modulo{\sigma_\alpha \, \sigma_\beta}
      \leq
      -\frac{1}{2} \, \modulo{\sigma_\alpha \, \sigma_\beta}
      \\
      \Delta V_{\mathcal{G}_{out}}
      & \leq &
      C \, \modulo{\sigma_\alpha \, \sigma_\beta}
      &
      \Delta Q_{\mathcal{L}}
      & = &
      0
      \\
      \Delta V_{\mathcal{L}}
      & = &
      0
      &
      \Delta \Upsilon
      & \leq &
      (C \, K_{in} + C - \frac{1}{2}\, H_{\mathcal{G}})
      \modulo{\sigma_\alpha \, \sigma_\beta} \,.
    \end{array}
  \end{displaymath}
  Consider an interaction in the liquid phase, as in
  Figure~\ref{fig:interactions}, using~\eqref{eq:ieNonStandard} and
  definitions~\eqref{eq:Upsilon}:
  \begin{displaymath}
    \begin{array}{rcl@{\qquad\qquad}rcl}
      \Delta V_{\mathcal{G}_{in}}
      & = &
      0
      &
      \Delta Q_{\mathcal{G}}
      & = &
      0
      \\
      \Delta V_{\mathcal{G}_{out}}
      & = &
      0
      &
      \Delta Q_{\mathcal{L}}
      & \leq &
      (C \, \kappa^2 \, \bar\delta - 1)
      \modulo{\sigma_\alpha \, \sigma_\beta}
      \leq
      -\frac{1}{2} \, \modulo{\sigma_\alpha \, \sigma_\beta}
      \\
      \Delta V_{\mathcal{L}}
      & \leq &
      C \, \kappa^2 \, \modulo{\sigma_\alpha \, \sigma_\beta}
      &
      \Delta \Upsilon
      & \leq &
      \kappa^2 \, (C \, K_{\mathcal{L}} - \frac{1}{2} \, H_{\mathcal{L}})
      \modulo{\sigma_\alpha \, \sigma_\beta}
    \end{array}
  \end{displaymath}
  Consider now the case $\bar z = 0$, the case $\bar z = m$ being
  entirely analogous. By~\eqref{eq:ieFig2uno}, for $\kappa+\bar\delta$
  sufficiently small so that $C (\kappa+\bar\delta) < 1$, it follows,
  using definitions~\eqref{eq:Upsilon}, that:
  \begin{displaymath}
    \begin{array}{rcl@{\qquad\qquad}rcl}
      \Delta V_{\mathcal{G}_{in}}
      & \leq &
      -\modulo{\sigma_\alpha}
      &
      \Delta Q_{\mathcal{G}}
      & \leq &
      2 \, \bar\delta \, \modulo{\sigma_\alpha}
      + C \, \kappa \, \bar\delta \, \modulo{\sigma_\beta}
      \\
      \Delta V_{\mathcal{G}_{out}}
      & \leq &
      2 \modulo{\sigma_\alpha} + C \, \kappa \modulo{\sigma_\beta}
      &
      \Delta Q_{\mathcal{L}}
      & \leq &
      3\, \bar\delta \, \modulo{\sigma_\alpha} + \bar\delta \, \modulo{\sigma_\beta}
      \\
      \Delta V_{\mathcal{L}}
      & \leq &
      3 \modulo{\sigma_\alpha} - c\, \kappa \, \modulo{\sigma_\beta}
      &
      \Delta \Upsilon
      & \leq &
      \left[
        2
        -
        K_{in}
        +
        3 K_{\mathcal{L}}
        +
        (2H_{\mathcal{G}} + 3 \kappa^2 H_{\mathcal{L}}) \bar\delta
      \right]
      \modulo{\sigma_\alpha}
      \\
      & & & & &
      +
      \kappa
      \left[
        C
        -
        c\, K_{\mathcal{L}}
        +
        (C \, H_{\mathcal{G}} + \kappa \, H_{\mathcal{L}}) \bar\delta
      \right]
      \modulo{\sigma_\beta}
    \end{array}
  \end{displaymath}
  To complete the proof, observe that choosing
  \begin{enumerate}
  \item $K_{\mathcal{L}}$ so that $C - c \, K_\mathcal{L} \leq -2$;
  \item $K_{in}$ so that $2-K_{in}+ 3 K_{\mathcal{L}} \leq -2$;
  \item $H_{\mathcal{G}}$ so that $C(1+K_{in}) - \frac12\,
    H_{\mathcal{G}} \leq -1$;
  \item $H_\mathcal{L}$ so that $C \, K_{\mathcal{L}} - \frac12\,
    H_{\mathcal{L}} \leq -1$;
  \item $\bar\delta$ so that $(C\, H_{\mathcal{G}} + H_{\mathcal{L}})
    \bar\delta \leq 1$ and $(2 H_{\mathcal{G}} + 3 H_{\mathcal{L}})
    \bar\delta \leq 1$.
  \end{enumerate}
  \noindent ensures that~\eqref{eq:1} holds. The proof is concluded by
  induction on the interaction times.
\end{proof}

\begin{lemma}
  \label{lem:nocluster}
  With the algorithm defined above, if the piecewise constant initial
  datum $\tilde u^\epsilon$ is chosen so that $\Upsilon
  (u^\epsilon(0+)) < \bar\delta$, with $\bar\delta$ as in
  Lemma~\ref{lem:Upsilon}, ($u^\epsilon(t)$ being the approximate
  solution constructed above) then there exists no cluster point of
  interaction points.
\end{lemma}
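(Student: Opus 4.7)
The plan is to follow the pattern of the standard wave front tracking no-clustering argument, see~\cite[Chapter~7]{BressanLectureNotes}, adapted to the two-phase setting with the strips $\mathcal{I}_\epsilon^\pm$. The essential ingredients are: (a) uniform boundedness of $\Upsilon$ from Lemma~\ref{lem:Upsilon}; (b) a uniform bound on the total number of wave fronts ever created by the algorithm; and (c) the fact that each wave front is a straight-line segment in the $(t,z)$-plane, so any pair of fronts can cross at most once.

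First, I would invoke Lemma~\ref{lem:Upsilon} to note that $\Upsilon\bigl(u^\epsilon(t)\bigr)$ is non-increasing and bounded by $\bar\delta$. Summing the strict-decrease estimates in~\eqref{eq:1} across all interactions yields the finite bound
\[
\sum_{\bar z \in \mathcal{G}} |\sigma_\alpha \sigma_\beta|
\;+\; \kappa^{2} \sum_{\bar z \in \mathcal{L}} |\sigma_\alpha \sigma_\beta|
\;+\; \sum_{\bar z\in\{0,m\}} \bigl( |\sigma_\alpha| + \kappa |\sigma_\beta| \bigr)
\;\leq\; \Upsilon\bigl(u^\epsilon(0+)\bigr) \;<\; \bar\delta\,.
\]

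Next, I would count the wave fronts. At $t=0$ the algorithm produces finitely many fronts from the piecewise constant datum $\tilde u^\epsilon$, with rarefactions split into $\epsilon$-wavelets. Every interior interaction in $\mathcal{G}$ or in $\mathcal{L}$ preserves the front count ($2$ in, $2$ out), since rarefactions arising in the interior are never re-split. Each interface interaction produces at most $2$ outgoing fronts; an outgoing rarefaction of strength $\leq\bar\delta$, after crossing the strip $\mathcal{I}_\epsilon^\pm$, is split into at most $\lceil\bar\delta/\epsilon\rceil+1$ wavelets. A generation-number bookkeeping analogous to~\cite[Section~7.5]{BressanLectureNotes}, combined with the quantitative estimates of Lemmas~\ref{lem:InterLiquid} and~\ref{lem:InterInter} which control the sizes of outgoing waves in terms of the sizes of the incoming ones, then shows that only finitely many generations of fronts ever appear and the total number of fronts produced over $[0,+\infty)$ is bounded by some $N^\star=N^\star(\epsilon,\kappa)<\infty$.

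Finally, by construction every wave front moves along a straight segment in $(t,z)$, with speed either a Lax speed or $\pm 1$ inside the strips, so two distinct fronts can intersect at most once. Combining with the bound $N^\star$, the set of interaction points is finite in each bounded time strip and, a fortiori, has no accumulation point. The main obstacle will be the front-counting step: in the classical single-phase setting the front count is conserved after $t=0$, whereas here interface interactions keep generating fresh rarefactions that are re-split at the strip boundaries. The generation accounting, fed by the uniform-in-$\kappa$ estimates of Lemmas~\ref{lem:InterLiquid} and~\ref{lem:InterInter} together with the strict decrease of $\Upsilon$ in~\eqref{eq:1}, is precisely what closes this recursion.
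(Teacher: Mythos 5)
Your plan is built around a global front-count bound $N^\star(\epsilon,\kappa)$ obtained through a generation-number bookkeeping in the style of \cite[Section~7.5]{BressanLectureNotes}. That is precisely the step that does not go through here, and it is the point on which your own write-up stalls (``the main obstacle will be the front-counting step\,\dots\,is precisely what closes this recursion'' is not an argument). The generation counting in Bressan's book is designed around the \emph{simplified} Riemann solver and its non-physical waves: those waves are exactly the device used to cap the number of generations. The algorithm in this paper deliberately \emph{avoids} non-physical waves (via the strips $\mathcal{I}_\epsilon^\pm$), so the standard generation bound simply does not apply, and without a replacement there is no reason the front count should stay finite. Your first display, summing the $\Delta\Upsilon$-decrements, is correct but does not help: the products $|\sigma_\alpha\sigma_\beta|$ can be arbitrarily small, so a finite sum is compatible with infinitely many interactions.

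The paper's proof takes a different, more local route. It argues by contradiction with a \emph{first} cluster point $(t_*,z_*)$ and splits into two cases. If $z_*\notin\{0,m\}$, the key observation is that the only interactions that can increase the number of fronts of a given family are (i) same-family collisions that emit a rarefaction fan of total size $>\epsilon$, and (ii) rarefactions of size $>\epsilon$ reflected off an interface and re-split when leaving a strip. At any such interaction the decrease is not just $\O|\sigma_\alpha\sigma_\beta|$ but the \emph{quantized} amount $\Delta\Upsilon\leq -\kappa\epsilon/C$, which, combined with $\Upsilon\leq\bar\delta$, shows these events are finite in number; the remaining interactions emit at most one front per family, and \cite[Lemma~2.5]{relaxation} then rules out accumulation. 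If $z_*\in\{0,m\}$, the paper works inside a small backward trapezoid $\mathcal{N}_\eta\subset\mathcal{I}_\epsilon^\pm$: finitely many fronts enter through the base, none through the lateral sides, each front can generate at most one new front (when it hits the interface), and inside the strip all speeds are $\pm1$ so collisions of like-family waves are impossible --- giving only finitely many interaction points in $\mathcal{N}_\eta$, a contradiction. You would need to supply both the quantized-decrease observation (with the $\epsilon$ showing up explicitly) and the geometric strip argument; the abstract ``generation accounting'' you invoke does not substitute for either.
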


\begin{proof}
  By contradiction, call $t_*$ the first time at which a cluster point
  $(t_*, z_*)$ of interaction points appears.

  First, assume that $z_* \neq 0$ and $z_* \neq m$. Call $\mathcal{U}$
  a neighborhood of $(t_*, z_*)$ not intersecting the interfaces $z
  \in \{0,m\}$. The interactions where there are more than one
  outgoing waves of the same family are those where
  \begin{itemize}
  \item two waves of the same family hit against each other
    originating a rarefaction fan of the other family of total size
    bigger than $\epsilon$; and
  \item a wave hits an interface, resulting in a new reflected
    rarefaction larger than $\epsilon$ which is eventually split as it
    reaches the boundary of the strip.
  \end{itemize}
  Because of the estimates~\eqref{eq:ieStandard},
  \eqref{eq:ieNonStandard}, \eqref{eq:ieFig2uno} and~\eqref{eq:1}, at
  any of these interactions $\Delta \Upsilon \leq -
  \frac{\kappa}{C}\varepsilon$. Hence, these interactions may take
  place only a finite number of times. An application
  of~\cite[Lemma~2.5]{relaxation} contradicts the existence of $(t_*,
  z_*)$.

  Assume now $z_* = 0$, the case $z_* = m$ being entirely
  equivalent. For a small positive $\eta$, choose a trapezoid
  $\mathcal{N}_\eta$ contained in $\mathcal{I}_\epsilon^-$ of the form
  \begin{displaymath}
    \mathcal{N}_\eta
    =
    \left\{
      (t,z) \in \mathcal{I}^-
      \colon
      t \in \left]t_* - \eta, t_* \right[
      \mbox{ and }
      \modulo{\frac{z-z_*}{t-t_*-\eta}} \leq 2
    \right\}.
  \end{displaymath}
  By construction, finitely many waves cross the lower side of
  $\mathcal{N}_\eta$ and no wave may enter $\mathcal{N}_\eta$ along
  the two sides. Inside $\mathcal{N}_\eta$, any wave can generate
  another wave at most once, when it hits the interface $z =
  z_*$. Inside $\mathcal{N}_\eta$ waves propagate with speed either
  $1$ or $-1$ and at interactions between waves with different speeds,
  no new wave is produced. Hence, the total number of interaction
  points inside $\mathcal{N}_\eta$ is finite. This contradicts the
  existence of a cluster point of interaction points.
\end{proof}

To ensure that the value of the functional at $t = 0+$ is sufficiently
small in order that all the above interaction estimates hold true, we
need some conditions on the total variation of the initial data. The
standard estimates on the solution of the Riemann problem
(see~\cite[Chapter~5]{BressanLectureNotes}) imply that, in the gas, it
is sufficient that the initial datum has sufficiently small total
variation. On the other hand, in the liquid, the estimates on the
Riemann problem depend on the small parameter $\kappa$, as shown
in~\eqref{eq:es1}, see also Figure~\ref{fig:estriemann}.  All this
justifies the introduction of the weighted total
variation~\eqref{eq:wtv}.

\begin{lemma}
  \label{lem:wtw}
  Consider $\bar \delta$ as defined in Lemma~\ref{lem:RS_liquid} and
  let $\left(\tilde p^\epsilon, \tilde v^\epsilon\right)=\tilde
  u^\epsilon \colon \reali \to \reali^+ \times \reali$ be piecewise
  constant, continuous at $z=0$ and $z=m$ such that $\norma{\tilde
    p^\epsilon - p_o}_{\L\infty} < \bar\delta$. If $u^{\varepsilon}$
  is the approximate solution constructed above, then, there exists a
  positive $C$, which can be chosen bounding from above all $\O$
  appearing in~\eqref{eq:ieStandard}, \eqref{eq:ieNonStandard}
  and~\eqref{eq:ieFig2uno}, such that
  \begin{displaymath}
    \frac{1}{C} \; {\wtv}_\kappa (\tilde u^\epsilon)
    \leq
    \Upsilon (u^\epsilon(0+))
    \leq
    C \; {\wtv}_\kappa (\tilde u^\epsilon)
  \end{displaymath}
  with ${\wtv}_\kappa$ defined as in~\eqref{eq:wtv} and $\Upsilon$ as
  in~\eqref{eq:Upsilon} with the weights $K_{in}, K_{\mathcal{L}},
  H_{\mathcal{G}}$ and $H_{\mathcal{L}}$ chosen as in
  Lemma~\ref{lem:Upsilon}.
\end{lemma}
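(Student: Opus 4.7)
The goal is a two-sided comparison between $\Upsilon\left(u^\epsilon(0+)\right)$ and ${\wtv}_\kappa(\tilde u^\epsilon)$, so the plan is to work one discontinuity of $\tilde u^\epsilon$ at a time and then sum. Since $\tilde u^\epsilon$ is continuous at $z = 0$ and $z = m$, every Riemann problem at $t = 0$ lies either entirely within $\mathcal{G}$ or entirely within $\mathcal{L}$, and the interface estimates from Lemma~\ref{lem:RS_interface} are not invoked at this stage. At a jump $z_\alpha$ in $\mathcal{L}$ with left and right states $(p^l,v^l),(p^r,v^r)$, Lemma~\ref{lem:RS_liquid} gives the upper bound $|\sigma_1^\alpha|+|\sigma_2^\alpha| \leq \O\left(|\Delta \tilde p^\epsilon_\alpha| + |\Delta \tilde v^\epsilon_\alpha|/\kappa\right)$; in $\mathcal{G}$ the analogue holds with $\kappa=1$. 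For the matching lower bound, I would read off from Lemma~\ref{lem:para} the identity $v^r - v^l = -\kappa\,\sigma_1^\alpha\, F\bigl(\Pi_\kappa(p^m),\Pi_\kappa(p^l)\bigr) + \kappa\,\sigma_2^\alpha\, F\bigl(\Pi_\kappa(p^m),\Pi_\kappa(p^r)\bigr)$ and combine it with the uniform positive bounds on $F$ from item~\ref{it:Bound} of Lemma~\ref{lem:para} to get $|\Delta \tilde v^\epsilon_\alpha|/\kappa \leq \O\left(|\sigma_1^\alpha|+|\sigma_2^\alpha|\right)$, while $|\Delta \tilde p^\epsilon_\alpha| \leq |\sigma_1^\alpha|+|\sigma_2^\alpha|$ is immediate from $\sigma_1^\alpha + \sigma_2^\alpha = \Delta \tilde p^\epsilon_\alpha$. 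The splitting of each rarefaction into $\varepsilon$-wavelets preserves the sum of the $|\sigma|$'s, so it has no effect on these estimates.

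Summing over all initial jumps and using that $\tilde p^\epsilon$ is continuous at the two interfaces, so that $\tv(\tilde p^\epsilon,\reali) = \tv(\tilde p^\epsilon,\mathcal{G}) + \tv(\tilde p^\epsilon,\mathcal{L})$, the previous step yields
\[
V_{\mathcal{G}_{in}} + V_{\mathcal{G}_{out}} + V_\mathcal{L}
\;\sim\;
\tv(\tilde p^\epsilon,\mathcal{G}) + \tv(\tilde v^\epsilon,\mathcal{G})
+ \tv(\tilde p^\epsilon,\mathcal{L}) + \tfrac{1}{\kappa}\tv(\tilde v^\epsilon,\mathcal{L})
\;=\;
{\wtv}_\kappa(\tilde u^\epsilon),
\]
the $1$-waves and $2$-waves issuing from each gas-jump being correctly distributed between $V_{\mathcal{G}_{in}}$ and $V_{\mathcal{G}_{out}}$ according to the side of the liquid they sit on and their propagation direction. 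Since all weights $K_{in}, K_\mathcal{L}, H_\mathcal{G}, H_\mathcal{L}$ in~\eqref{eq:Upsilon} are $\geq 1$ and both $Q_\mathcal{G}, Q_\mathcal{L} \geq 0$, this already delivers the lower bound $\Upsilon \geq (1/C)\,{\wtv}_\kappa$.

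For the upper bound, the only remaining contribution is the quadratic part $H_\mathcal{G} Q_\mathcal{G} + \kappa^2 H_\mathcal{L} Q_\mathcal{L}$. The elementary inequality $Q \leq \tfrac12 V^2$ gives $H_\mathcal{G} Q_\mathcal{G} \leq \tfrac{H_\mathcal{G}}{2}(V_{\mathcal{G}_{in}}+V_{\mathcal{G}_{out}})^2$ and, crucially, $\kappa^2 H_\mathcal{L} Q_\mathcal{L} \leq \tfrac{H_\mathcal{L}}{2}(\kappa V_\mathcal{L})^2 \leq \tfrac{H_\mathcal{L}}{2}\,{\wtv}_\kappa^2$, since the $\kappa^2$ weight in $\Upsilon$ exactly cancels the $1/\kappa$ factor built into $V_\mathcal{L}$. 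Hence $\Upsilon \leq C\,{\wtv}_\kappa + C'\,{\wtv}_\kappa^2$, which collapses to $\Upsilon \leq C\,{\wtv}_\kappa$ in the regime of small weighted total variation within which the algorithm is well defined, and the constant $C$ can be chosen so as to dominate all the $\O$'s appearing in~\eqref{eq:ieStandard}, \eqref{eq:ieNonStandard} and~\eqref{eq:ieFig2uno}. The one delicate point, rather than a real obstacle, is this $\kappa$-uniform balancing: the factor $1/\kappa$ in~\eqref{eq:wtv} compensates the vanishing slope of the Lax curves in $\mathcal{L}$ (cf.~Figure~\ref{fig:estriemann}), and the matching $\kappa^2$ in~\eqref{eq:Upsilon} keeps $\Upsilon$ and ${\wtv}_\kappa$ comparable uniformly as $\kappa\to 0$, which is the whole reason for the specific weights introduced in~\eqref{eq:Upsilon}.
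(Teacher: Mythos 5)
Your argument is correct and follows essentially the same route as the paper's: both directions rest on the pressure-difference parametrization of Lemma~\ref{lem:para}, which gives $\modulo{\Delta v}\le \O\,\kappa\modulo{\Delta p}$ along Lax curves in $\mathcal L$ (and $\O\modulo{\Delta p}$ in $\mathcal G$) for the lower bound, and on~\eqref{eq:es1} plus its $\kappa=1$ gas analogue for the upper bound, with the quadratic potentials absorbed by smallness of the total variation. The only cosmetic difference is that you spell out the $Q\le\tfrac12 V^2$ absorption explicitly while the paper dismisses it as ``the usual bound''; also note that your phrase ``the $1/\kappa$ factor built into $V_{\mathcal L}$'' is slightly misleading (that factor sits in $\wtv_\kappa$, not in $V_{\mathcal L}$, which is a plain sum of pressure jumps), but the inequality you actually use, $\kappa^2 Q_{\mathcal L}\le\O\,\wtv_\kappa^2$, is valid.
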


\begin{proof}
  Let $\sigma_{\alpha}$ be the sizes of the waves in
  $u^{\varepsilon}(0+)$ and $z_{\alpha}$ be their locations.  Consider
  the estimate on the left.  The strength of a wave is the absolute
  value of the pressure difference between the states on its sides,
  therefore, because of the weights' choice in Lemma~\ref{lem:Upsilon}
  (they are all greater than $1$), we have
  \begin{displaymath}
    \tv (\tilde p^\epsilon)
    \leq
    \tv (p^\epsilon(0+))
    =
    \sum_\alpha \modulo{\sigma_\alpha}
    \leq
    \Upsilon (u^\epsilon(0+)) \,.
  \end{displaymath}
  The slopes of Lax curves in the gas do not depend on
  $\kappa$~\eqref{eq:LaxGas}, hence, along a Lax curve, the jump in
  the speed is uniformly controlled by the jump in the pressure:
  \begin{displaymath}
    \tv (\tilde v^\epsilon; \mathcal{G})
    \le
    \tv (v^\epsilon(0+); \mathcal{G})
    \le
    \O \sum_\alpha \modulo{\sigma_\alpha}
    \le
    \O \Upsilon (u^\epsilon(0+)) \,.
  \end{displaymath}
  Finally, in the liquid we use~\eqref{eq:para} which shows that along
  a Lax curve in the liquid, the jump in the speed is controlled by
  $\kappa$ times the jump in the pressure:
  \begin{displaymath}
    \tv (\tilde v^\epsilon; \mathcal{L})
    \le
    \tv (v^\epsilon(0+); \mathcal{L})
    \le
    \O \kappa\sum_\alpha \modulo{\sigma_\alpha}
    \le
    \O \kappa\Upsilon (u^\epsilon(0+)) \,.
  \end{displaymath}
  This concludes the proof of the left estimate.

  Passing to the right inequality, recall the usual bound $\Upsilon
  (u^\epsilon(0+)) \leq \O \sum_\alpha \modulo{\sigma_\alpha}$ which
  clearly holds also for $\Upsilon$ as defined in~\eqref{eq:Upsilon}.
  Proceed using the classical estimate for the solutions to the
  Riemann problems in the gas and~\eqref{eq:es1} in the liquid:
  \begin{eqnarray*}
    \Upsilon (u^\epsilon(0+))
    & = &
    \O \sum_\alpha \modulo{\sigma_\alpha}
    \\
    & = &
    \O \left(
      \sum_{z_\alpha \in \pint{\mathcal{G}}}\modulo{\sigma_\alpha}
      +
      \sum_{z_\alpha \in \mathcal{L}}\modulo{\sigma_\alpha}
    \right)
    \\
    & = &
    \O \Bigg(
    \sum_{z_\alpha \in \pint{\mathcal{G}}}
    \left(
      \modulo{\tilde p^\epsilon (z_\alpha+) - \tilde p^\epsilon (z_\alpha-)}
      +
      \modulo{\tilde v^\epsilon (z_\alpha+) - \tilde v^\epsilon (z_\alpha-)}
    \right)
    \\
    & &
    \qquad\qquad
    +
    \sum_{z_\alpha \in \mathcal{L}}
    \left(
      \modulo{\tilde p^\epsilon (z_\alpha+) - \tilde p^\epsilon (z_\alpha-)}
      +
      \frac{1}{\kappa}
      \modulo{\tilde v^\epsilon (z_\alpha+) - \tilde v^\epsilon (z_\alpha-)}
    \right)
    \Bigg)
    \\
    & = &
    \O \, {\wtv}_\kappa (\tilde u^\epsilon) \,,
  \end{eqnarray*}
  completing the proof.
\end{proof}

\begin{proposition}
  \label{prop:approxsol}
  Fix a positive pressure $p_o$ and let $P^g, P$ satisfy~\textbf{(P)}.
  There exist constants $\delta, \Delta, L, \kappa_* > 0$, with
  $\kappa_* < 1$, such that, for any $\kappa \in \left]0,
    \kappa_*\right[$, for any piecewise constant initial datum $\tilde
  u = (\tilde p, \tilde v)$, continuous at the points $z=0$, $z=m$,
  satisfying $\wtv_{\kappa} (\tilde u) \le \delta$ and $\norma{\tilde
    p - p_o}_{\L\infty} \le \delta$, the wave front tracking
  approximate solution $u^{\kappa,\varepsilon} = (p^{\kappa,\epsilon},
  v^{\kappa,\epsilon})$ to the Cauchy problem
  for~\eqref{eq:finalequation} can be constructed for all times $t\geq
  0$. Moreover, given the specific volume as $\tau^{\kappa,\epsilon}
  \left(t,z\right) = \mathcal{T}_{\kappa} \left(z,p^{\kappa,\epsilon}
    \left(t,z\right) \right)$, the following estimates hold.

  \medskip

  \noindent For any $t,t_{1},t_{2}\geq 0$
  \begin{equation}
    \label{eq:timeestimates}
    \begin{array}{@{}l@{\,}c@{\,}rl@{\,}c@{\,}r@{}}
      {\wtv}_{\kappa} \left(u^{\kappa,\varepsilon}(t,\cdot)\right)
      & \leq &
      \Delta,
      &&
      \\[10pt]
      \tv\left(p^{\kappa,\varepsilon}(t,\cdot), \mathcal{L}\right)
      & \leq &
      \Delta,
      &
      \int_{\mathcal{L}} \left|p^{\kappa,\varepsilon}
        (t_{2},z)-p^{\kappa,\varepsilon}(t_{1},z)\right|\d{z}
      & \leq &
      \frac{1}{\kappa}L\left|t_{2}-t_{1}\right|,
      \\[10pt]
      \tv\left(v^{\kappa,\varepsilon}(t,\cdot),\mathcal{L}\right)
      & \leq &
      \kappa\Delta,
      &\int_{\mathcal{L}}\left|v^{\kappa,\varepsilon}
        (t_{2},z)-v^{\kappa,\varepsilon}(t_{1},z)\right|\d{z}
      & \leq &   L\left|t_{2}-t_{1}\right|,
      \\[10pt]
      \tv\left(\tau^{\kappa,\varepsilon}(t,\cdot), \mathcal{L}\right)
      & \leq &
      \kappa^{2}\Delta,
      &\int_{\mathcal{L}}\left|\tau^{\kappa,\varepsilon}
        (t_{2},z)-\tau^{\kappa,\varepsilon}(t_{1},z)\right|\d{z}
      & \leq &
      \kappa L\left|t_{2}-t_{1}\right|,
      \\[10pt]
      \tv \left(p^{\kappa,\varepsilon}(t,\cdot), \mathcal{G}\right)
      & \leq &
      \Delta,
      &\int_{\mathcal{G}}
      \left|p^{\kappa,\varepsilon}(t_{2},z)-p^{\kappa,\varepsilon}(t_{1},z)\right|\d{z}
      & \leq &
      L\left|t_{2}-t_{1}\right|,
      \\[10pt]
      \tv \left(v^{\kappa,\varepsilon}(t,\cdot),\mathcal{G}\right)
      & \leq &
      \Delta,
      &
      \int_{\mathcal{G}}\left|v^{\kappa,\varepsilon}
        (t_{2},z)-v^{\kappa,\varepsilon}(t_{1},z)\right|\d{z}
      & \leq &
      L\left|t_{2}-t_{1}\right|,
      \\[10pt]
      \tv\left(\tau^{\kappa,\varepsilon}(t,\cdot),\mathcal{G}\right)
      & \leq &
      \Delta,
      &
      \int_{\mathcal{G}}\left|\tau^{\kappa,\varepsilon}
        (t_{2},z)-\tau^{\kappa,\varepsilon}(t_{1},z)\right|\d{z}
      & \leq &
      L\left|t_{2}-t_{1}\right|.
    \end{array}
  \end{equation}
  For any $z\in \mathcal{L},\;z_{1},z_{2}\in \mathcal{L} \setminus
  ([-\epsilon^2, \epsilon^2] \cup [m-\epsilon^2, m+\epsilon^2])$
  \begin{equation}
    \label{eq:spaceestimates}
    \begin{array}{@{}l@{\,}c@{\,}rl@{\,}c@{\,}r@{}}
      \tv\left(p^{\kappa,\varepsilon}(\cdot,z),\reali^{+}\right)
      & \leq &
      \frac{\Delta}{\kappa},
      &\int_{\reali^+}\left|p^{\kappa,\varepsilon}
        (t,z_{2})-p^{\kappa,\varepsilon}(t,z_{1})\right|\d{t}
      & \leq &
      L\left|z_{2}-z_{1}\right|,
      \\[10pt]
      \tv\left(v^{\kappa,\varepsilon}(\cdot,z),\reali^{+}\right)
      & \leq &
      \Delta,
      &\int_{\reali^+}\left|v^{\kappa,\varepsilon}
        (t,z_{2})-v^{\kappa,\varepsilon}(t,z_{1})\right|\d{t}
      & \leq &
      \kappa L\left|z_{2}-z_{1}\right|,
      \\[10pt]
      \tv\left(\tau^{\kappa,\varepsilon}(\cdot,z),\reali^{+}\right)
      & \leq &
      \kappa\Delta,
      &\int_{\reali^+}\left|\tau^{\kappa,\varepsilon}
        (t,z_{2})-\tau^{\kappa,\varepsilon}(t,z_{1})\right|\d{t}
      & \leq &
      \kappa^{2}L\left|z_{2}-z_{1}\right|.
    \end{array}
  \end{equation}
  For any $z,z_{1},z_{2}\in \mathcal{G}$
  \begin{equation}
    \label{eq:spaceestimates2}
    \begin{array}{@{}l@{\,}c@{\,}rl@{\,}c@{\,}r@{}}
      \tv\left(p^{\kappa,\varepsilon}(\cdot,z),\reali^{+}\right)
      & \leq &
      \Delta,
      &\int_{\reali^+}\left|p^{\kappa,\varepsilon}
        (t,z_{2})-p^{\kappa,\varepsilon}(t,z_{1})\right|\d{t}
      & \leq &
      L\left|z_{2}-z_{1}\right|,
      \\[10pt]
      \tv\left(v^{\kappa,\varepsilon}(\cdot,z),\reali^{+}\right)
      & \leq &
      \Delta,
      &\int_{\reali^+}\left|v^{\kappa,\varepsilon}
        (t,z_{2})-v^{\kappa,\varepsilon}(t,z_{1})\right|\d{t}
      & \leq &
      L\left|z_{2}-z_{1}\right|,
      \\[10pt]
      \tv\left(\tau^{\kappa,\varepsilon}(\cdot,z),\reali^{+}\right)
      & \leq &
      \Delta,
      &\int_{\reali^+}\left|\tau^{\kappa,\varepsilon}
        (t,z_{2})-\tau^{\kappa,\varepsilon}(t,z_{1})\right|\d{t}
      & \leq &
      L\left|z_{2}-z_{1}\right|.
    \end{array}
  \end{equation}
  For any $z,z_{1},z_{2}\in \reali$
  \begin{equation}
    \label{eq:spaceestimatesEpsilon}
    \begin{array}{@{}l@{\,}c@{\,}rl@{\,}c@{\,}r@{}}
      \tv\left(p^{\kappa,\varepsilon}(\cdot,z),\reali^{+}\right)
      & \leq &
      \frac{\Delta}{\kappa},
      &\int_{\reali^+}\left|p^{\kappa,\varepsilon}
        (t,z_{2})-p^{\kappa,\varepsilon}(t,z_{1})\right|\d{t}
      & \leq &
      \frac{L}{\kappa}\left|z_{2}-z_{1}\right|,
      \\[10pt]
      \tv\left(v^{\kappa,\varepsilon}(\cdot,z),\reali^{+}\right)
      & \leq &
      \Delta,
      &\int_{\reali^+}\left|v^{\kappa,\varepsilon}
        (t,z_{2})-v^{\kappa,\varepsilon}(t,z_{1})\right|\d{t}
      & \leq &
      L\left|z_{2}-z_{1}\right|.
    \end{array}
  \end{equation}
  Moreover, the maximal size of rarefaction waves is uniformly bounded
  by a constant, independent of $\kappa$, times $\epsilon$.
\end{proposition}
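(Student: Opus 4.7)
The plan combines Lemmas~\ref{lem:wtw}, \ref{lem:Upsilon} and~\ref{lem:nocluster} to make the wave front tracking algorithm globally well defined, and then reads each listed estimate off the structure of $\Upsilon$, of the parametrization~\eqref{eq:para}, and of the eigenvalues~\eqref{eq:lambdaGas}--\eqref{eq:lambdaLiquid}. First, choose $\delta < \bar\delta/C$, with $C$ and $\bar\delta$ as in Lemmas~\ref{lem:wtw} and~\ref{lem:Upsilon} respectively, so that $\Upsilon(u^{\kappa,\epsilon}(0+)) \leq C\, {\wtv}_\kappa(\tilde u) < \bar\delta$ for every piecewise--constant approximation of the initial datum. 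Lemma~\ref{lem:Upsilon} then gives $\Upsilon(u^{\kappa,\epsilon}(t,\cdot)) \leq \Upsilon(u^{\kappa,\epsilon}(0+))$ for all $t\geq 0$, so that all Riemann problems arising at interactions remain in the range of validity of Lemmas~\ref{lem:RS_liquid}--\ref{lem:RS_interface} and of the interaction estimates~\eqref{eq:ieNonStandard}--\eqref{eq:ieFig2uno}; Lemma~\ref{lem:nocluster} rules out cluster points of interactions, hence the construction extends globally in time.

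The time--slice TV estimates then follow because $\sum_\alpha |\sigma_\alpha| \leq \O\, \Upsilon(u^{\kappa,\epsilon}(t,\cdot)) \leq \O\, \delta$: this yields directly the bounds on ${\wtv}_\kappa$ and on $\tv(p^{\kappa,\epsilon}(t,\cdot))$ in each phase. Inside $\mathcal{L}$, the parametrization~\eqref{eq:para} with item~\ref{it:Bound} of Lemma~\ref{lem:para} gives $|\Delta v| \leq \O\,\kappa\, |\sigma|$, whence the $\kappa\Delta$ bound on $\tv(v^{\kappa,\epsilon}(t,\cdot),\mathcal{L})$; since $\mathcal{T}_\kappa'(p) = \kappa^2\, \mathcal{T}'(\Pi_\kappa(p))$, the $\tau$--jumps are of order $\kappa^2\,|\sigma|$, producing the $\kappa^2\Delta$ bound. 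The $\L1$ Lipschitz--in--time estimates in~\eqref{eq:timeestimates} are then obtained from $\int|u(t_2,z)-u(t_1,z)|\,\d z \leq (t_2-t_1)\,\sup_\tau\sum_\alpha|\sigma_\alpha^u(\tau)|\, |\lambda_\alpha|$: in $\mathcal{L}$ the speeds of order $1/\kappa$ produce the $1/\kappa$ factor for $p$, which is cancelled by the extra $\kappa$ (respectively $\kappa^2$) in the $v$--jumps (respectively $\tau$--jumps).

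The harder part is the space estimates at a fixed $z$. In $\mathcal{G}$, classical arguments give $\tv(p^{\kappa,\epsilon}(\cdot,z),\reali^+) \leq \O\, \Upsilon(u^{\kappa,\epsilon}(0+))$ by summing pressure jumps over all crossings of the vertical line $\{z\}\times\reali^+$, and the $\L1$ Lipschitz--in--$z$ estimate follows from $\int_0^\infty |u(t,z_2)-u(t,z_1)|\,\d t \leq \sum_\alpha|\sigma_\alpha^u|\, |z_2-z_1|/|\lambda_\alpha|$. For $z \in \mathcal{L}$ a single wave may instead cross the vertical line many times by bouncing back and forth between the two interfaces, and here~\eqref{eq:1} enters decisively: each bounce of a liquid wave of strength $|\sigma|$ costs at least $\kappa\,|\sigma|$ in $\Upsilon$, so the sum of all pressure jumps over all crossings is at most $\O\,\Upsilon(0+)/\kappa \leq \Delta/\kappa$. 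The bounds for $v$ and $\tau$ at $z$ follow with the usual factors $\kappa,\kappa^2$ from Lemma~\ref{lem:para}, and the $\L1$ Lipschitz--in--$z$ estimates exploit the cancellation between $1/|\lambda_\alpha| = \O(\kappa)$ in the liquid and the $1/\kappa$ in the corresponding TV. The global estimate~\eqref{eq:spaceestimatesEpsilon} is then obtained by joining liquid and gas contributions through the matching interface conditions, and the last claim on rarefaction sizes follows because rarefactions have initial maximal size $\leq\epsilon$, get split again when exiting the strips $\mathcal{I}_\epsilon^\pm$, and inside $\mathcal{L}$ can only grow by a factor $\O\,\kappa^2$ at each interaction, thanks to Lemma~\ref{lem:InterLiquid}. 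The main obstacle is precisely the sharpness of the bound $\Delta/\kappa$ on the pressure TV at fixed $z \in \mathcal{L}$: missing the fact that each interface bounce consumes a full $\kappa\,|\sigma|$ of $\Upsilon$ would turn the factor $1/\kappa$ into something worse and compromise the subsequent compressible--to--incompressible limit.
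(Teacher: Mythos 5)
Your overall strategy matches the paper's: control $\Upsilon$ at $t=0+$ via Lemma~\ref{lem:wtw}, propagate it with Lemma~\ref{lem:Upsilon}, rule out clustering with Lemma~\ref{lem:nocluster}, read off TV bounds for $p$ from $\Upsilon$, transfer to $v$ and $\tau$ with factors $\kappa$, $\kappa^2$ from Lemma~\ref{lem:para} and $\mathcal{T}_\kappa' = \kappa^2\mathcal{T}'(\Pi_\kappa)$, and get the Lipschitz estimates kinematically from the wave speeds. For the space TV at fixed $z\in\mathcal{L}$, your heuristic --- each reflection at an interface costs $\gtrsim\kappa|\sigma|$ of $\Upsilon$, so the number of crossings of $\{z\}\times\reali^+$ is $\O(\Upsilon(0+)/\kappa)$ --- is the right intuition. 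The paper makes it rigorous by introducing the potential $\Upsilon_z(t) = W_z(t) + V_z(t) + \frac{3C}{\kappa}\Upsilon(t)$ (with $W_z$ the accumulated pressure jump at $z$ and $V_z$ the approaching waves) and checking monotonicity by casework on all interaction types, which in particular absorbs the case of fresh waves refracted into $\mathcal{L}$ from the gas, something your ``bounce'' accounting does not explicitly cover. This is a difference of rigor rather than of idea; you'd need to write down such a functional to close the argument.

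There is, however, a genuine gap in your treatment of the last claim on the maximal size of rarefaction waves. Saying that inside $\mathcal{L}$ a rarefaction ``can only grow by a factor $\O\,\kappa^2$ at each interaction'' conflates the quadratic interaction estimate $|\Delta\bar\sigma| \leq \O\,\kappa^2\,|\bar\sigma\,\sigma_\alpha|$ with a multiplicative growth bound: each interaction gives a factor $(1 + \O\,\kappa^2|\sigma_\alpha|)$, and since the number of interactions is not a priori bounded, this by itself does not yield a uniform cap on $|\bar\sigma|$. The paper handles this by constructing a second weighted functional $s(t) = |\bar\sigma(t)|\bigl[1 + 6C^2\kappa^2 V_s(t) + 24C^3\kappa\,\Upsilon(t)\bigr]$ whose monotonicity (again verified by casework, and crucially using the $\kappa$-weighted drops in $\Upsilon$ at interface interactions) gives $|\bar\sigma(\tau)| \leq 6|\bar\sigma(t_0)|$ as long as the wave stays in one phase. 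It then combines this with (i) the refraction estimate from Lemma~\ref{lem:InterInter}, which multiplies the size by at most a uniform constant $3C$ at each interface crossing, (ii) the observation that the reflected rarefaction is re-split upon exiting the strip $\mathcal{I}_\epsilon^\pm$, and (iii) the structural fact that a single rarefaction can cross each interface at most once. Your sketch omits the functional $s(t)$, omits the refraction multiplier, and omits the ``at most one crossing'' argument; without these, the claimed uniform bound of the form $\O\,\epsilon$ on rarefaction sizes is not established.
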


\begin{proof}
  Choose $\bar\delta$ as in Lemma~\ref{lem:Upsilon} and $\kappa_*$ as
  in Lemma~\ref{lem:InterInter}. Define $\delta = \bar\delta/C$, with
  $C$ as in Lemma~\ref{lem:wtw}. Using the piecewise constant initial
  data $\tilde u$, we use the previously described algorithm and call
  $u^{\kappa,\varepsilon}$ the piecewise constant approximate solution
  so obtained. By Lemma~\ref{lem:wtw}, we have
  $\Upsilon\left(u^{\kappa,\varepsilon}(0+)\right)\leq C\cdot
  \wtv_{\kappa}\left(\tilde u\right)<\bar \delta$. By
  Lemma~\ref{lem:Upsilon}, the map
  $t\to\Upsilon\left(u^{\kappa,\varepsilon}(t)\right)$ is not
  increasing so that $\Upsilon\left(u^{\kappa,\varepsilon}(t)\right)<
  \bar \delta$ for all positive times. Lemma~\ref{lem:nocluster}
  ensures that $u^{\kappa,\varepsilon}$ can be constructed for all
  times $t\geq 0$.  Again, Lemma~\ref{lem:wtw} implies the estimate
  \begin{displaymath} {\wtv}_\kappa \left(u^{\kappa,\epsilon}
      (t)\right) \leq C \, \Upsilon\left(u^{\kappa,\epsilon}
      (t)\right) \leq C \, \Upsilon\left(u^{\kappa,\epsilon}(0+)
    \right)\leq C^{2} \, {\wtv}_{\kappa}\left(\tilde u\right) \leq
    C^{2} \, \delta \, =C\bar\delta.
  \end{displaymath}
  The estimates on the total variation of $p^{\kappa,\epsilon}$ and
  $v^{\kappa,\epsilon}$ in~\eqref{eq:timeestimates} immediately
  follow. To obtain the bounds on the total variation of the specific
  volume in the liquid, use~\eqref{eq:tkfamily}. The Lipschitz
  continuity estimates in~\eqref{eq:timeestimates} are now a standard
  consequence, see e.g.~\cite[Section~7.4]{BressanLectureNotes}, since
  the wave propagation speed in the gas is uniformly bounded
  independently of $\kappa$ and in the liquid (also in
  $\mathcal{I}_\epsilon^{\pm}$) is bounded by $\O/\kappa$.

  \smallskip

  Pass to~\eqref{eq:spaceestimates}.  Observe first that from the
  proof of Lemma~\ref{lem:Upsilon} it follows that
  \begin{equation}
    \label{eq:3}
    6 \, C^{3} \bar  \delta \leq 1
    \qquad \mbox{ and } \qquad
    C\geq 1 \,.
  \end{equation}
  As usual, we call $\sigma_\alpha$ the size of the wave supported at
  $z_\alpha$. For $z \in \mathcal{L}$, define
  \begin{eqnarray*}
    \Upsilon_z (t)
    & = &
    W_z (t) + V_z (t) + \frac{3C}{\kappa} \; \Upsilon (t)
    \\
    W_z (t)
    & = &
    \sum_{\tau \in [0, t]} \modulo{\Delta p (\tau, z)}
    \\
    V_z (t)
    & = &
    \sum_{\alpha \in I_z (t)} \modulo{\sigma_\alpha}
    \\
    I_z (t)
    & = &
    \left\{
      \alpha \colon
      z_{\alpha}\in\mathcal{L}\mbox{ and the wave at }z_{\alpha} \mbox{ is of the }
      \left\{
        \begin{array}{l}
          \mbox{first family and } z_\alpha > z
          \\
          \mbox{second family and } z_\alpha < z
        \end{array}
      \right.
    \right\}\\
    \Upsilon(t)&=&\Upsilon\left(u^{\kappa,\varepsilon}(t)\right)
  \end{eqnarray*}
  Note that the sum defining $W_z$ is actually a finite sum, since the
  total number of waves is finite by Lemma~\ref{lem:nocluster}. We
  claim that $t \to \Upsilon_z (t)$ is non increasing. Indeed,
  $\Upsilon_z (t)$ may change its value at a time $t$ when:
  \begin{enumerate}
  \item A wave with size $\sigma_{\bar\alpha}$ crosses $z$ and no
    other interaction occurs. Then, $\Delta W_z (t) = \modulo{\Delta p
      (t, z)} = \modulo{\sigma_{\bar\alpha}}$, $\Delta V_z (t) =
    -\modulo{\sigma_{\bar\alpha}}$ and $\Delta\Upsilon (t) =
    0$. Hence, $\Delta \Upsilon_z (t) = 0$.
  \item An interaction in $\mathcal{G}$ occurs and no wave crosses
    $z$. Then, $\Delta W_z (t) =0$, $\Delta V_z (t) = 0$ and $\Delta
    \Upsilon (t) \leq 0$. Hence, $\Delta \Upsilon_z (t) \leq 0$.
  \item An interaction in $\mathcal{L}$ occurs and no wave crosses
    $z$. Then, $\Delta{ W_z (t)} = 0$; calling $\sigma_\alpha$,
    $\sigma_\beta$ the sizes of the interacting waves, $\Delta V_z (t)
    \leq C \kappa^2 \modulo{\sigma_\alpha \, \sigma_\beta}$
    by~\eqref{eq:ieNonStandard} and $\Delta \Upsilon (t) \leq
    -\kappa^2 \modulo{\sigma_\alpha \sigma_\beta}$
    by~\eqref{eq:1}. Hence, $\Delta\Upsilon_z (t) \leq C\left(1 -
      \frac{3}{\kappa}\right) \kappa^2 \modulo{\sigma_\alpha
      \sigma_\beta} \leq 0$.
  \item A $2$-wave with size $\sigma_\alpha$, coming from
    $\mathcal{G}$, and a $1$-wave with size $\sigma_\beta$, coming
    from $\mathcal{L}$, interact at $\bar z=0$. Then, $\Delta W_z (t)
    = 0$; by Lemma~\ref{lem:InterInter}, $\Delta V_z (t) \leq
    (1-c\kappa) \modulo{\sigma_\beta} + (2+C\bar\delta)
    \modulo{\sigma_\alpha}$; by Lemma~\ref{lem:Upsilon} $\Delta
    \Upsilon (t) \leq -\modulo{\sigma_\alpha } - \kappa
    \modulo{\sigma_\beta}$. Hence, $\Delta\Upsilon_z (t) \leq
    \left(1-c\kappa-3C\right) \modulo{\sigma_\beta} +
    \left(2+C\bar\delta-\frac{3C}{\kappa}\right)
    \modulo{\sigma_{\alpha}} \leq 0$.
  \item Two waves interact at $z=m$: the same procedure as above
    applies.
  \end{enumerate}
  The remaining times where $\Upsilon_z$ may change value consist in
  the superposition of two or more of the cases considered above and
  can be dealt superimposing the corresponding
  inequalities. Therefore,
  \begin{displaymath}
    \begin{array}{rclclcl}
      \tv\left(p^{\kappa,\epsilon} (\cdot, z)\right)
      & = &
      \displaystyle
      \sup_{T>0} \tv\left(p^{\kappa,\epsilon} (\cdot, z); [0,T]\right)
      & =&
      \displaystyle
      \sup_{T>0} W_z (T)
      & \leq &
      \displaystyle
      \sup_{T>0} \Upsilon_z (T)
      \\[12pt]
      & \leq &
      \Upsilon_z (0+)
      & = &
      V_z (0+) + \frac{3C}{\kappa} \Upsilon (0+)
      & \leq &
      \frac{\Delta}{\kappa}
    \end{array}
  \end{displaymath}
  provided $\Delta > 2 \bar \delta$, completing the proof of the first
  estimate on the total variation in~\eqref{eq:spaceestimates}. The
  remaining total variation bounds in~\eqref{eq:spaceestimates} follow
  from the estimates
  \begin{displaymath}
    \modulo{\Delta v (t,z)} \leq \O \kappa \; \modulo{\Delta p (t,z)}
    \quad \mbox{ and } \quad
    \modulo{\Delta \tau (t,z)} \leq \O \kappa^2 \; \modulo{\Delta p (t,z)}
  \end{displaymath}
  which hold along Lax curves by Lemma~\ref{lem:para}
  and~\eqref{eq:tkfamily}.  The Lipschitz continuity estimates
  in~\eqref{eq:spaceestimates} are now a standard consequence, see
  e.g.~\cite[Section~7.4]{BressanLectureNotes}, since the wave
  propagation speed in $\mathcal{L} \setminus ([-\epsilon^2,
  \epsilon^2] \cup [m-\epsilon^2, m+\epsilon^2])$ is of order
  $1/\kappa$.

  \smallskip

  The proof of the estimates~\eqref{eq:spaceestimates2} is obtained
  from that of~\eqref{eq:spaceestimates} completed above, formally
  setting $\kappa = 1$ and with obvious modifications to the
  definition of $\Upsilon_z$.

  The estimates on all the real line~\eqref{eq:spaceestimatesEpsilon}
  are obtained choosing a common upper bound on the total variation
  and a common lower bound on the wave speeds in the liquid, in the
  gas and in the two strips $\mathcal{I}^{\pm}_{\varepsilon}$ and
  observing that $z\mapsto u^{\kappa,\varepsilon}\left(t,z\right)$ is
  continuous at $z=0$, $z=m$ for every $t\geq 0$ in which no wave
  interacts with the interfaces. Observe that a similar Lipschitz
  estimate does not hold for the specific volume $\tau$, since at
  $z=0$ and $z=m$ it is not continuous.  \smallskip

  Finally, the estimate on the maximal size of rarefaction waves
  follows the lines in~\cite[Section~7.3,
  Step~5]{BressanLectureNotes}. Indeed, call $\bar\sigma(t)$ the size
  at time $t$ of a rarefaction wave in the wave front tracking
  approximation. We claim that, if in the interval $[t_{o},\tau]$ the
  wave does not leave the phase in which it is found at time $t_{o}$
  and does not disappear due to possible interactions with shocks of
  the same family, then $\left|\bar\sigma(\tau)\right|\leq 6
  \left|\bar \sigma(t_{o})\right|$.

  Indeed, consider the liquid phase, let $\bar z(t)$ be the location
  of the wave at time $t$ and define
  \begin{displaymath}
    \begin{array}{rcl}
      s(t) & = &
      \left|\bar \sigma(t)\right|
      \left[1+6 \, C^{2} \, \kappa^{2} \, V_{s}(t)
        +
        24 \, C^{3} \, \kappa \, \Upsilon(t)
      \right]
      \\
      V_s (t)
      & = &
      \sum_{\alpha \in I_{s} (t)} \modulo{\sigma_\alpha}
      \\
      I_{s} (t)
      & = &
      \left\{
        \alpha \colon
        z_{\alpha}\in\mathcal{L} \mbox{, the wave at }z_{\alpha}
        \mbox{ is approaching the wave at  } \bar z
      \right\}.
    \end{array}
  \end{displaymath}
  The function $t \to s(t)$ is non increasing in the interval
  $\left[t_{o},\tau\right]$.  Indeed, $s(t)$ may change its value at
  the following times:
  \begin{enumerate}
  \item At time $t$ a wave $\sigma_{\alpha}$ interacts with the wave
    at $\bar z(t)$ and no other interaction occurs. Then,
    by~\eqref{eq:ieNonStandard} $\Delta \left|\bar
      \sigma(t)\right|\leq C \kappa^{2}\left|\bar
      \sigma(t-)\sigma_{\alpha}\right|$; $\Delta V_{s} (t) =
    -\left|\sigma_{\alpha}\right| $; $\Delta\Upsilon(t)< 0$.  Hence,
    by~\eqref{eq:3},
    \begin{displaymath}
      \begin{split}
        \!\!\!\!  \Delta s(t) &= \Delta \left|\bar
          \sigma(t)\right|\left[1+6C^{2}\kappa^{2}V_{s}(t+)+
          24C^{3}\kappa\Upsilon(t+) \right]+
        \left|\bar\sigma(t-)\right|\left[6C^{2}\kappa^{2}\Delta
          V_{s}(t)+ 24C^{3}\kappa\Delta\Upsilon(t)\right]
        \\
        &\leq C\kappa^{2}\left|\bar \sigma(t-)\sigma_{\alpha}\right|
        \left[1+ 6C^{2}\kappa^{2}\bar \delta+24C^{3}\kappa\bar
          \delta\right] -
        6C^{2}\kappa^{2}\left|\bar\sigma(t-)\right|\left|\sigma_{\alpha}\right|
        \\
        &\leq C\kappa^{2}\left|\bar \sigma(t-)\sigma_{\alpha}\right|
        \left[1+ 6C^{2}\kappa^{2}\bar \delta+24C^{3}\kappa\bar \delta
          - 6C\right]
        \\
        &\leq C\kappa^{2}\left|\bar \sigma(t-)\sigma_{\alpha}\right|
        \left[1+ 1+4 - 6C\right]\leq 0 \,.
      \end{split}
    \end{displaymath}
  \item At time $t$, an interaction in $\mathcal{G}$ occurs and no
    wave crosses $\bar z(t)$. Then, $\Delta \left|\bar\sigma
      (t)\right| =0$, $\Delta V_{s} (t) = 0$ and $\Delta \Upsilon (t)
    \leq 0$. Hence, $\Delta s (t) \leq 0$.
  \item At time $t$, an interaction in $\mathcal{L}$ occurs and no
    wave crosses $\bar z(t)$. Then, $\Delta\left|\bar\sigma (t)\right|
    = 0$; calling $\sigma_\alpha$, $\sigma_\beta$ the sizes of the
    interacting waves, $\Delta V_{s} (t) \leq C\kappa^2
    \modulo{\sigma_\alpha \, \sigma_\beta}$
    by~\eqref{eq:ieNonStandard} and $\Delta \Upsilon (t) \leq
    -\kappa^2 \modulo{\sigma_\alpha \sigma_\beta}$
    by~\eqref{eq:1}. Hence,
    \begin{displaymath}
      \Delta s (t) \leq
      \left|\bar\sigma(t)\right|
      \left[6C^{3}\kappa^{4}\left|\sigma_{\alpha}\sigma_{\beta}\right|
        -24C^{3}\kappa^{3}\left|\sigma_{\alpha}\sigma_{\beta}\right|\right]
      \leq
      \left|\bar\sigma(t)\right|
      \left|\sigma_{\alpha}\sigma_{\beta}\right|
      6C^{3}\kappa^{3}(\kappa -4) \leq 0 \,.
    \end{displaymath}
  \item At time $t$, an interaction occurs at $z=0$ and no wave
    crosses $\bar z(t)$. Call $\sigma_\alpha$ the size of the wave
    coming from $\mathcal{G}$, and $\sigma_\beta$ the size of the wave
    coming from $\mathcal{L}$. Then, $\Delta \left|\bar
      \sigma(t)\right| = 0$. By Lemma~\ref{lem:InterInter}, $\Delta
    V_s (t) \leq (1-c\kappa) \modulo{\sigma_\beta} + (2+C\bar\delta)
    \modulo{\sigma_\alpha}$; by Lemma~\ref{lem:Upsilon} $\Delta
    \Upsilon (t) \leq -\modulo{\sigma_\alpha } - \kappa
    \modulo{\sigma_\beta}$. Hence,
    \begin{displaymath}
      \begin{split}
        \Delta s(t) &\leq \left|\bar \sigma(t)\right|\left[
          6C^{2}\kappa^{2}\left((1-c\kappa) \modulo{\sigma_\beta} +
            (2+C\bar\delta) \modulo{\sigma_\alpha}\right)
          -24C^{3}\kappa\left(\modulo{\sigma_\alpha } + \kappa
            \modulo{\sigma_\beta}\right) \right]\\
        &\leq 6C^{2}\kappa\left|\bar \sigma(t)\right|\left[
          \kappa\left((1-c\kappa) \modulo{\sigma_\beta} +
            (2+C\bar\delta) \modulo{\sigma_\alpha}\right)
          -4C\left(\modulo{\sigma_\alpha } + \kappa
            \modulo{\sigma_\beta}\right) \right]\\
        &\leq 6C^{2}\kappa\left|\bar \sigma(t)\right|\left[
          \kappa\left|\sigma_{\beta}\right|\left(1-c\kappa-4C\right)
          +\left|\sigma_{\alpha}\right|\left(2\kappa+C\kappa\bar
            \delta-4C\right)\right]\leq 0
      \end{split}
    \end{displaymath}
  \item Two waves interact at $z=m$: the same procedure as above
    applies.
  \end{enumerate}
  The remaining times where $s(t)$ may change value consist in the
  superposition of two or more of the cases considered above and can
  be dealt superimposing the corresponding inequalities proved
  above. Therefore $s(\tau)\leq s(t_{o})$ which implies
  \begin{displaymath}
    \left|\bar \sigma(\tau)\right|
    \leq
    \left|\bar \sigma(t_{o})\right|
    \frac{1+6 C^{2} \kappa^{2}V_{s}(t_{o})+24C^{3}\kappa\Upsilon(t_{o})}
    {1+6 C^{2} \kappa^{2}V_{s}(\tau)+24C^{3}\kappa\Upsilon(\tau)}
    \leq
    \left[1+6 C^{2} \bar\delta+24C^{3}\bar\delta\right]
    \left|\bar \sigma(t_{o})\right|
    \leq
    6\left|\bar \sigma(t_{o})\right|
  \end{displaymath}
  This proves the claim in the liquid. In the case of a wave in the
  gas, the argument is similar: it is sufficient to set $\kappa=1$ in
  the definition of $s(t)$ and make the obvious modifications to the
  map $V_s$.

  Finally, we observe now that when a wave crosses the interfaces, the
  refracted wave has a strength given by the strength of the incoming
  wave times a constant bounded uniformly with respect to $\kappa$,
  for instance we can choose $3C$ (see Lemma~\ref{lem:InterInter}).
  Moreover, when a rarefaction is born, its strength is less than
  $\epsilon$ and it can cross at most an interface once. Therefore,
  also the last claim of the Proposition is proved with the constant
  $1944 \, C^{2}\varepsilon$.
\end{proof}

\begin{proofof}{Theorem~\ref{thm:kappa}}
  Use $\delta, \Delta, L, \kappa_* > 0$ as defined in
  Proposition~\ref{prop:approxsol} and choose any $\kappa\in
  \left]0,\kappa^{*} \right[$. Fix a suitable sequence $\epsilon_\nu$
  strictly decreasing to $0$. Approximate the initial datum $\tilde
  u=\left(\tilde p,\tilde v\right)$ with an approximate, piecewise
  constant initial datum $\tilde u^{\epsilon_\nu}$
  satisfying~\eqref{eq:hyp_id}, so that $\wtv_{\kappa}\left(\tilde
    u^{\varepsilon_{\nu}}\right)\le \wtv_{\kappa}\left(\tilde
    u\right)\le \delta$, $\|\tilde
  p^{\varepsilon_{\nu}}-p_{o}\|_{\L\infty}\le \delta$.

  Proposition~\ref{prop:approxsol} ensures that it is possible to
  construct a wave front tracking $\epsilon_\nu$--approximate solution
  $(p^{\kappa,\epsilon_\nu}, v^{\kappa,\epsilon_\nu})$ that satisfies
  all properties stated therein.

  Using~\eqref{eq:timeestimates} and~\eqref{eq:spaceestimatesEpsilon},
  a repeated application of Helly
  Theorem~\cite[Theorem~2.4]{BressanLectureNotes}, ensures the
  convergence of a suitable subsequence, which we still denote by
  $u^{\kappa,\epsilon_\nu}$, to a function $u^{\kappa} =
  \left(p^{\kappa},v^{\kappa}\right)$ in the following sense
  \begin{eqnarray*}
    \lim_{\nu \to +\infty}
    \norma{(p^{\kappa,\epsilon_\nu}, v^{\kappa,\epsilon_\nu}) (t, \cdot)
      -
      (p^\kappa, v^\kappa) (t, \cdot)}_{\L1 ([-M,M]; \reali^+ \times \reali)}
    & = & 0, \mbox{ for any }t\geq 0,\;M>0
    \\
    \lim_{\nu \to +\infty}
    \norma{(p^{\kappa,\epsilon_\nu}, v^{\kappa,\epsilon_\nu}) (\cdot,z)
      -
      (p^\kappa, v^\kappa) (\cdot,z)}_{\L1 ([0,M]; \reali^+ \times \reali)}
    & = & 0, \mbox{ for any }z\in\reali,\; M>0
    \\
    \left(p^{\kappa},v^{\kappa}\right)(0,\cdot)
    & = &
    \left(\tilde p^{\kappa},\tilde v^{\kappa}\right)(\cdot).
  \end{eqnarray*}
  Passing to the limit in~\eqref{eq:timeestimates},
  \eqref{eq:spaceestimates}, \eqref{eq:spaceestimates2},
  \eqref{eq:spaceestimatesEpsilon}, we
  obtain~\eqref{eq:THMtimeestimates}, \eqref{eq:THMspaceestimates},
  \eqref{eq:THMspaceestimates2} and~\eqref{eq:spaceestimatesAllLine}.

  Since the bounds on the total variation are uniform in $\varepsilon$
  and since the strength of rarefactions is uniformly bounded by a
  constant times $\varepsilon$, standard techniques in wave front
  tracking~\cite[Section 7.4]{BressanLectureNotes} can be used to show
  that the limit $u^{\kappa}$ is a weak entropy solution
  to~\eqref{eq:finalequation} in the open regions $z<0$, $0<z<m$,
  $z>m$. By~\eqref{eq:spaceestimatesAllLine}, we have that the map $z
  \to u^{\kappa} (\cdot, z)$ is continuous in $\L1$, in particular it
  is continuous across $z=0$ and $z=m$. Therefore, $u^\kappa$
  trivially satisfies there the Rankine-Hugoniot conditions and the
  entropy (in)equality. Hence, $u^{\kappa}$ is a weak entropy solution
  to~\eqref{eq:finalequation} in all $ \reali^+ \times \reali$.
\end{proofof}

\begin{proofof}{Theorem~\ref{thm:limit}}
  By~\eqref{eq:questa}, $\wtv_\kappa (\tilde u) < \delta$ so that
  Theorem~\ref{thm:kappa} applies, ensuring the existence of a
  solution $u^\kappa = (p^\kappa, v^\kappa)$
  to~\eqref{eq:finalequation} satisfying~\eqref{eq:THMtimeestimates},
  \eqref{eq:THMspaceestimates}, \eqref{eq:THMspaceestimates2}
  and~\eqref{eq:spaceestimatesAllLine}.

  Since $\kappa<1$, from~\eqref{eq:THMtimeestimates}
  and~\eqref{eq:spaceestimatesAllLine} we have for $v^{\kappa}$:
  \begin{equation}
    \label{eq:lipallreal}
    \begin{array}{@{}rclrcll@{}}
      \tv\left(v^{\kappa}(t,\cdot),\reali\right)
      & \leq &
      \Delta,
      &
      \int_{\reali}\left|v^{\kappa}
        (t_{2},z)-v^{\kappa}(t_{1},z)\right| \d{z}
      & \leq &
      L\left|t_{2}-t_{1}\right|,
      & t,t_{1},t_{2}\geq  0,
      \\
      \tv\left(v^{\kappa}(\cdot,z),\reali^{+}\right) & \leq & \Delta,
      &
      \int_{\reali^{+}}\left|v^{\kappa} (t,z_{2})-v^{\kappa}(t,z_{1})\right| \d{t}
      & \leq &
      L\left|z_{2}-z_{1}\right|,
      & z,z_{1},z_{2}\in\reali.
    \end{array}
  \end{equation}
  Helly Theorem~\cite[Theorem~2.4]{BressanLectureNotes} implies the
  existence of a subsequence (that we call again $v^{\kappa}$)
  converging to a limit $v^{*}$ in the sense
  of~\eqref{eq:vconvergence}.  From the bound
  in~\eqref{eq:THMtimeestimates} on the total variation of
  $v^{\kappa}$ or from the Lipschitz estimate
  in~\eqref{eq:THMspaceestimates} for $v^{\kappa}$ in the liquid, it
  is straightforward to obtain that $v^{*}(t,z)=v_{l}(t)$ for all
  $z\in\mathcal{L}$ and $t\geq 0$, where $v_{l}(t)$ is a function
  which depends on time only, completing the proof
  of~\eqref{eq:vconvergence} and of 3.~in Definition~\ref{def:sol2}.

  The same procedure can be carried out for the pressure in the gas
  region, proving the first four lines in~\eqref{eq:pconvergence}.
  Observe that for the pressure, we cannot apply Helly Theorem in the
  liquid since there the estimates blow up as $\kappa\to 0$.  Because
  of the strong convergence in the gas region of both the velocity and
  the pressure, the limit $u^{*}=\left(p^{*},v^{*}\right)$ satisfies
  1.~in Definition~\ref{def:sol2} and the initial condition
  $u^{*}(0,z)=\tilde u(z)$ a.e.~$z\in\mathcal{G}$.

  The uniform convergence of $\tau^{\kappa}$ in the liquid is a
  straightforward consequence of~\eqref{eq:tkfamily} and of the
  uniform bound on the $\L\infty$ norm of $p^{\kappa}$.

  Since the pressure is uniformly bounded, we have a weak$^\star$
  convergence (possibly passing to further subsequences) $p^{\kappa}
  \wsto p^{*}$ in
  $\L\infty\left(\reali^{+}\times\reali,\reali\right)$~\cite[Section~4.3
  Point~C.]{Brezis}. If we define
  $p_{l}=p^{*}_{|{\reali^{+}\times\mathcal{L}}}$ we get the fifth line
  in~\eqref{eq:pconvergence}.

  By~\eqref{eq:THMtimeestimates} and~\eqref{eq:spaceestimatesAllLine},
  the second equation in~\eqref{eq:finalequation} can be written in
  integral form in $\left[t_{1},t_{2}\right]\times\mathcal{L}$:
  \begin{equation}
    \label{eq:secondintegraleq}
    \int_{0}^{m}v^{\kappa}\left(t_{1},z\right)\;\d{z}
    -\int_{0}^{m}v^{\kappa}\left(t_{2},z\right)\;\d{z}
    +\int_{t_{1}}^{t_{2}}p^{\kappa}\left(t,0\right)\;\d{t}
    -\int_{t_{1}}^{t_{2}}p^{\kappa}\left(t,m\right)\;\d{t}=0.
  \end{equation}
  Now, we use the strong convergence of both $p^{\kappa}$ and
  $v^{\kappa}$ in the gas region and the fact that in $\mathcal{L}$,
  $v^{*}$ is constant to obtain
  \begin{displaymath}
    m\left[v_{l}\left(t_{2}\right)-v_{l}\left(t_{1}\right)\right]
    =
    \int_{t_{1}}^{t_{2}}p^{*}\left(t,0\right)\;\d{t}
    -\int_{t_{1}}^{t_{2}}p^{*}\left(t,m\right)\;\d{t}.
  \end{displaymath}
  Setting $t_{1}=0$ and $t_{2}=t$ in the last expression above,
  \begin{displaymath}
    v_{l}\left(t\right)=
    v_{l}\left(0\right)
    +\frac{1}{m}
    \int_{0}^{t}
    \left[p^{*}\left(s,0\right) - p^{*}\left(s,m\right)\right]\; \d{s}
    =
    v_{l}\left(0\right) + \frac{1}{m}
    \int_{0}^{t}\left[p^{*}\left(s,0-\right)
      -p^{*}\left(s,m+\right)\right]\;\d{s}
  \end{displaymath}
  which means that $v_{l}$ is Lipschitz continuous and satisfies 2.~in
  Definition~\ref{def:sol2}.

  Observe that the non linear term
  $\mathcal{T}_{\kappa}\left(z,p^{\kappa}\right)$ converges strongly
  to
  \begin{displaymath}
    \tau^{*}(t,z)=
    \begin{cases}
      \bar \tau & \mbox{ for }z\in\mathcal{L},
      \\
      \mathcal{T}_{g}\left(p^{*}(t,z)\right) &\mbox{ for } z \in
      \mathcal{G},
    \end{cases}
  \end{displaymath}
  hence we can pass to the limit in~\eqref{eq:finalequation} in
  distributional sense to obtain
  \begin{equation}
    \label{eq:finaleqlimit}
    \begin{cases}
      \partial_{t}\tau^{*}-\partial_{z}v^{*}=0
      \\
      \partial_{t}v^{*}+\partial_{z}p^{*}=0,
    \end{cases}
    \mbox{ in }\reali^{+}\times\reali.
  \end{equation}
  Since in the liquid region $v^{*}\left(t,z\right)=v_{l}(t)$ with
  $v_{l}$ Lipschitz continuous, the second equation
  in~\eqref{eq:finaleqlimit} becomes
  \begin{displaymath}
    \partial_{z} p^* (t,z)= - \dot v_{l}(t)
    \mbox{ in }\reali^{+}\times\mathcal{L}.
  \end{displaymath}
  Therefore there exists a measurable function $\beta(t)$ such that
  the function
  \begin{displaymath}
    p_{l}(t,z) = - z \dot v_{l}(t) + \beta (t)
  \end{displaymath}
  can be chosen as a representative of the limit pressure $p^{*}$
  restricted to the liquid. This implies the existence of the two
  limits
  \begin{displaymath}
    \lim_{z\to 0^{+}}p_{l}(t,z) = \beta(t),
    \qquad
    \lim_{z\to m^{-}} p_{l}(t,z) = \beta(t)-z\dot v_{l}(t).
  \end{displaymath}
  The fourth line in~\eqref{eq:pconvergence} ensures the existence of
  the corresponding limits from the gas region:
  \begin{displaymath}
    \lim_{z\to 0^{-}} p^{*}(t,z)=p^{*}(t,0),
    \qquad
    \lim_{z\to m^{+}} p^{*}(t,z)=p^{*}\left(t,m\right),
    \mbox{ a.e. }t\in\reali^{+},
  \end{displaymath}
  hence Rankine-Hugoniot conditions for~\eqref{eq:finaleqlimit}
  applied along $z=0$ and $z=m$ imply that the right and the left
  limit of the pressure must coincide along $z=0$ and $z=m$ for
  a.e.~$t\geq 0$. Therefore, we have
  \begin{displaymath}
    \begin{cases}
      p^{*}(t,0)=\beta(t)
      \\
      p^{*}(t,m)=-m\dot v_{l}(t)+\beta(t)
    \end{cases}
    \mbox{ for a.e. }t\geq 0,
  \end{displaymath}
  which implies the remaining equality to be proved
  in~\eqref{eq:pconvergence}.
\end{proofof}

\noindent\textbf{Acknowledgment:} The present work was supported by
the PRIN~2012 project \emph{Nonlinear Hyperbolic Partial Differential
  Equations, Dispersive and Transport Equations: Theoretical and
  Applicative Aspects} and by the Gruppo Nazionale per l'Analisi
Matematica,
la Probabilità e le loro Applicazioni (GNAMPA) of the Istituto Nazionale di Alta Matematica (INdAM).

{\small

  \bibliographystyle{abbrv}

  \bibliography{IncompressibleLimit}

}

\end{document}